\def\BState{\State\hskip-\ALG@thistlm}
\numberwithin{equation}{section}
\newtheorem{assumption}[theorem]{Assumption}
\newtheorem{remark}{Remark}[section]
\NewDocumentCommand{\dgal}{sO{}m}{%
  \IfBooleanTF{#1}
    {\dgalext{#3}}
    {\dgalx[#2]{#3}}%
}
\NewDocumentCommand{\dgalext}{m}{%
  \sbox0{%
    \mathsurround=0pt % just for safety
    $\left\{\vphantom{#1}\right.\kern-\nulldelimiterspace$%
  }%
  \sbox2{\{}%
  \ifdim\ht0=\ht2
    \{\kern-.45\wd2 \{#1\}\kern-.45\wd2 \}%
  \else
    \left\{\kern-.5\wd0\left\{#1\right\}\kern-.5\wd0\right\}%
  \fi
}
\NewDocumentCommand{\dgalx}{om}{%
  \sbox0{\mathsurround=0pt$#1\{$}%
  \sbox2{\{}%
  \ifdim\ht0=\ht2
    \{\kern-.45\wd2 \{#2\}\kern-.45\wd2 \}%
  \else
    \mathopen{#1\{\kern-.5\wd0 #1\{}
    #2
    \mathclose{#1\}\kern-.5\wd0 #1\}}
  \fi
}
\newcommand{\bb}[1]{\mathbf{#1}}
\newcommand{\bk}{{\bf k}}
\newcommand{\bn}{{\bf n}}
\newcommand{\bK}{{\bf K}}
\newcommand{\bv}{{\bf v}}
\newcommand{\bx}{{\bf x}}
\newcommand{\LW}{{\mathcal{L}^W}}
\newcommand{\kpar}{{k_{\parallel}}}
\newcommand{\I}{\mathrm{i}}
\title{Unfitted Nitsche's method for computing wave modes in topological materials}
\author{Hailong Guo\thanks{School of Mathematics and Statistics,  The University of Melbourne,  Parkville, VIC 3010, Australia   (hailong.guo@unimelb.edu.au).}
\and %
Xu Yang\thanks{Department of Mathematics, University of California, Santa Barbara, CA, 93106, USA (xuyang@math.ucsb.edu).}
\and%
Yi Zhu\thanks{Yau Mathematical Sciences Center and Department of Mathematical Sciences, Tsinghua University, Beijing, 100084, People's Republic of China (yizhu@mail.tsinghua.edu.cn).}
}
\begin{document}

\maketitle

%\medskip
%
%\begin{center}
%
%\end{center}
\medskip

\begin{abstract}
In this paper, we propose an unfitted Nitsche's method for computing wave modes in topological materials. The proposed method is based on  the Nitsche's technique to study the performance-enhanced topological materials which have strongly heterogeneous structures ({\it e.g.}, the refractive index is piecewise constant with high contrasts). For periodic bulk materials, we use Floquet-Bloch theory and solve an eigenvalue problem in a torus with unfitted meshes. For the materials with a line defect, a sufficiently large domain with zero boundary conditions is used to compute the localized eigenfunctions corresponding to the edge modes. The interfaces are handled by  the  Nitsche's method on an unfitted uniform mesh. We prove the proposed methods converge optimally. Several  numerical examples are presented  to validate the theoretical results and demonstrate the capability of simulating topological materials.

%\textbf{YZ: Shall we mention the analysis results?}

\vskip .3cm
{\bf AMS subject classifications.} \ {}%{Primary 65N50, 65N30; Secondary 65N15, 53C99}
\vskip .3cm

{\bf Key words.} \ {Nitsche's method, photonic graphene, topological material, edge state}
\end{abstract}

\section{Introduction}
The past decade has witnessed an explosion of research on topological materials. The delicate structures of these materials admit novel and subtle propagating wave patterns which are immune to backscattering from disorder and defects\cite{rechtsman2013photonic,lu2014topological,plotnik2013observation,Shvets-PTI:13,Ablowitz_Zhu_HC_12}. The underlying mechanism is the existence of so-called ``topologically protected edge states''. These wave modes, which propagate along and decay rapidly transverse to the edge, are robust against local defects. Thus they can be used to transfer energy, information and so on. Over the past few years, in addition to the electronic system in which the topological phenomena was firstly studied, such topological phenomena have been experimentally realized in many other physical systems, such as electromagnetic waves in photonic systems and  acoustic waves in phononic systems \cite{Sheng:15,lu2014topological,MKW:15,susstrunk2015observation,ablowitz2009conical, WZLS2019}. 

%To study topological phenomena in a physical system, the key is to generate these edge states. However, this is not an easy task from the analytical aspect. So far, analytical formulae have been obtained in some simple discrete tight-binding models (\refs). For continuous models, the analytical structure can only be investigated in a very narrow parameter regime (\refs). Generally, we use numerical approaches to generate the edge states in most real circumstances despite these analytical progress.

There are many physical models which admit topologically protected edge states. This work is concerned with wave modes in topological photonic materials. The  mathematical problem that we study is the following eigenvalue problem
\begin{equation}\label{eq:eigen}
\LW \Psi \equiv -\nabla\cdot W(\bx)\nabla  \Psi(\bx) = E \Psi(\bx), \quad \bx=(x_1,x_2) \in \mathbb{R}^2.
\end{equation}
This equation can arise in the in-plane propagation of electromagnetic waves in a photonic crystal whose permittivity is invariant along the longitudinal direction. In this scenario,the electromagnetic fields $(E_1, E_2, E_3, H_1,H_2,H_3)$ can be divided into two decoupled components: transverse electric (TE) mode $(E_1, E_2, H_3)$ and transverse magnetic (TM) mode $(H_1, H_2, E_3)$. The  addimissible  TE modes in a specific material, characterized by the material weight function $W(\bx)$, satisfy the above eigenvalue problem \eqref{eq:eigen}. Here, $\Psi(\bx)$ corresponds to the longitudinal magnetic field $H_3$ and  the eigenvalue $E$ equals $\omega^2$ with $\omega$ being the frequency of the electromagnetic fields.
The other two components of the TE modes are $(E_1, E_2)=\pm\frac{i}{\sqrt{E}}\left(-\partial_{x_2}\Psi, \partial_{x_1}\Psi\right)$ corresponding to frequency $\pm \sqrt{E}$ respectively. We refer to \cite{LWZ2019,hu2020linear} for more details. Though the eigenvalue problem \eqref{eq:eigen} can also be obtained in other physical systems such as acoustic waves, we restrict our physical applications in the photonic aspect.

To ensure the existence the topological edge states, delicate structures are required for the material weight $W(\bx)$. Here we focus on the honeycomb-based material weight. The corresponding material is referred to as  ``photonic graphene''. Specifically, the material weight is of the form
\begin{equation}\label{eq:Wdef}
W(\bx)=A(\bx)+\delta\kappa(\delta \bk_2\cdot \bx)B(\bx),
\end{equation}
where $A(\bx)$ and $B(\bx)$ are hexagonally periodic Hermitian matrices, $\kappa(\cdot)\in \mathbb{R}$ is a bounded transition function,  $\delta>0$ is a parameter characterizing the intensity and width of the transition, and the detailed conditions are given in Section 2. From the application point of view, we need to obtain the bulk property ({\it i.e.}, $\delta=0$ ) and edge state property. Understanding the bulk property requires that we solve the eigenvalue problem in a torus using the Floquet-Bloch theory. A topological material can be constructed by gluing two bulks together by the transition function. Consequently, to investigate the wave modes in topological materials, we have to solve the eigenvalue problem in a cylinder since the existence of the transition breaks the periodicity along one direction.

Regarding the analytical understanding of the eigenvalue problem \eqref{eq:eigen} with the material weight \eqref{eq:Wdef}, Lee-Thorp {\it et al.} proved that the perfect honeycomb material weight ensures the existence of Dirac points in the spectrum, which can be used to construct topological edge states \cite{LWZ2019}. They also perturbatively constructed the edge states for specific parallel wavenumbers when $\delta$ is small and the material weight $W(\bx)$ is smooth. Their work greatly extends our knowledge on the understanding of topological edge states in a photonic system. However, their results are mostly on the existence aspect and lack the global structure of the bulk dispersion relation and edge states. All of these important studies rely on numerical simulations.
%
%
% Lee-Thorp, Weinstein and Zhu proved the existence of edge states with very specific wave lengthes when $\delta$ is very small and the material weight $W(\bx)$ is smooth. They also constructed a multi-scale formula of the edge states (\refs) based on the Bloch modes. Despite of the theoretical significance of the analysis, their results on the edge states have very limited applications when investigating a real photonic material. {\bf {\bf Rephase later}First, not smooth but piece-wise constant. Second, $\delta$ is not usually small. Third, the Bloch modes in the analytical formula have no explicit expression and can only be obtained numerically generally. Moreover, to obtain the global structure of the edge states and corresponding eigenvalues, we have to use numerical approaches. } Therefore, numerical approaches are highly desired for the study of the edge states.

Due to the particular structure of the photonic crystals, spectral method and finite element method are the two most popular methods. The spectral method utilizes the periodicity of the coefficients and eigenfunctions. Expanding the coefficients and Bloch modes into Fourier series and truncating the series into finite terms, the spectral method can achieve an exponential accuracy for smooth material weight $W(\bx)$. It is widely used for computing Bloch modes and corresponding energy surfaces \cite{SepctralBook,skorobogatiy2009fundamentals}.  However, one of the main shortcomings is that the final matrix is usually not sparse. If the material weight varies drastically or contain discontinuities which is the scenario of this paper , this method requires a large number of Fourier modes to resolve the coefficients. Thus the computation becomes expensive to solve the numerical algebra associated with a large and dense matrix. Actually, our numerical examples show that Fourier spectral method can lead to unreliable and even wrong results when the contrast (jump ratio) of the material weight is very high (see Figure \ref{fig:c30} ).

A competitive alternative is the finite element method.  In fact, finite element methods have been adopted in the computation of topological edge modes.   In our recent work \cite{GYZ2019},  we proposed a superconvergent post-processing method to compute topological edge modes for photonic graphene with smooth weight coefficient. The key idea is to recover more accurate gradients for numerical eigenfunctions and  use them to  improve the accuracy of   approximate  eigenvalues by using the Rayleigh quotient.  The superconvergent recovered gradient also enables us to reconstruct the full electromagnetic fields in real applications.   Due to the high contrast nature of the material coefficient, the proposed method can not be generalized directly.   The main difficulties are caused by the heterogeneous structure.    The existence of the jump in the material weight $W$ implies the non-smoothness of eigenfunctions across the material interfaces.  Although the classical finite element methods will work if the underlying mesh is fitted to the interface \cite{CZ1998, Ba1970, GY20181}, it is in general time-consuming and nontrivial to generate a body-fitted mesh.  The drawbacks become more serious for the interface with complicated geometric structure.  For the honeycomb structure,  the discontinuities in the material weight function is copied periodically,  which makes the generation of body-fitted meshes become challenging. Furthermore, the unstructured nature of the body-fitted meshes will introduce additional difficulties to impose the periodic or Bloch periodic boundary condition.
Those difficulties can be alleviated by adopting the unfitted numerical methods where the underlying meshes are independent of the location of the material interface.
To handle the non-smoothness across the material over the interface, one may need modify the finite difference stencil\cite{Pe1977, Pe2002, LL1994, LI2006},  finite element basis functions \cite{Li1998, LLW2003, HSWZ2013, HL2005, LLZ2015,  GY20172, GYZ2018}, or the weak formulation \cite{HH2002, BCHLM2015,AHD2012, GY20183}.

The main purpose of the paper is to propose a new kind of unfitted Nitsche's method based on the Floquet-Bloch transformation for computing  the  dispersion relation and wave modes in a honeycomb structure with strong heterogeneities.  The unfitted Nitsche's method was originally proposed in \cite{HH2002}  for the elliptic interface problem with real coefficients.  The key idea is to construct the approximation on each fictitious domain induced by the material interface and couple them together by the  Nitsche's technique \cite{Ni1971}.  For the development and application of the unfitted Nitsche's method, the interesting readers are referred to the recent review paper \cite{BCHLM2015}. Compared to the existing unfitted Nitsche's methods \cite{HH2002, BCHLM2015, AHD2012}, the proposed unfitted Nitsche's method uses Floquet-Bloch theory and solves an eigenvalue problem in a torus. For the $\mathcal{C}$-symmetry breaking case where the eigenvalue problem contains complex matrix-valued coefficients, a sufficiently large domain with zero boundary conditions is used to compute the localized eigenfunctions (edge mode).

%To deal with the Bloch periodic boundary condition,  we need to apply the Floquet-Bloch transform (\refs) to turn the interface eigenvalue problems into an equivalent to interface
%eigenvalue problem involving ({\color{red} it looks very hard for me to a correct word to describe it}).     The periodic boundary condition can be easily imposed on the uniform meshes.   ({\color{red} I am still organizing my language to state the difference between the proposed unfitted Nitsche's method and existing method. In particular, I want to emphases that the existing  method works only  for the diffusion equation with real-valued coefficient but here we have complex-valued matrix coefficient.})

One of the difficulties in analyzing the stability of the discrete Nitsche's bilinear form is that it involves the solution itself in addition to its gradient.  To the best of our knowledge, the existing unfitted Nitsche's method only focuses on the pure diffusion equation.  To establish the stability, we need the trace theorem on cut elements, i.e. elements cut by the interface.   The existing trace theorem  \cite{BCHLM2015, HH2002}  for the cut element involves both parts of the cut element. Direct application of the theorem is not able to entitle us the full possibility to prove the coercivity of the Nitsche's bilinear form.    Therefore, we build up a new trace inequality which involves only one part of the cut element.   The new trace inequality enables us to establish the stability and continuity for  Nitsche's bilinear form in term of the energy norm.  Using the approximation theory of the compact operator \cite{BO1991} and the interpolation error estimates, we are able to show the optimal convergence results for both discrete  eigenvalue and eigenfunctions  using   the proposed unfitted Nitsche's method.  In particular, the established error  estimates are independent of the location of the interface.    Furthermore, we show that there is no pollution in the numerical spectrum.

The rest of the paper is organized as follows. In Section~\ref{sec:phys}, we present the physical background of photonic graphene and the mathematical setup. In Section~\ref{sec:disp}, we focus on the computation of the  dispersion relation and  wave modes.   We start the section by  introducing the formulation of the unfitted Nitsche's method in the torus which gives us the unperturbed bulk properties.  The stability and continuity of the unfitted Nitsche's weak formulation are established.  Then, we extend the unfitted Nitsche's method to compute wave modes in a cylinder domain which corresponds to the physical setup of topological materials.   In Section~\ref{sec:symbreak}, we prove the numerically approximated eigenpairs converge optimally to the exact eigenpairs.   In Section~\ref{sec:num}, we present several numerical examples to justify the theoretical results.  We make conclusive remarks in Section~\ref{sec:conclusion}.
%{\bf plz introduce the standard FEM.  Shortcomings: low accuracy and the accuracy inconsistence between the eigenfunctions and their gradients which togenther consist the electromagnetic fields in real application. Then lead to gradient recovery. }
%
%{\bf more about gradient recovery and our approaches, explain why different from the standard gradient recovery}
%
%
%{\bf organization}

% {\bf Some repeated explanations. Some parts in two sections should merge into one}

\section{Physical problems and preliminaries}\label{sec:phys}

We will focus on the honeycomb-based photonic materials, and present the physical setup and briefly review the underlying theory.
\subsection{honeycomb structured material weight}
We consider the following specific hexagonal lattice
\begin{equation}
\Lambda=\mathbb{Z}\bv_1+\mathbb{Z}\bv_2=\left\{m_1\bv_1+m_2\bv_2: m_1,\  m_2\in\mathbb{Z}\right\},
\end{equation}
with the lattice basis vectors
\[
 \bv_1=\begin{pmatrix} \frac{\sqrt{3}}{2} \\  \\ \frac{1}{2}\end{pmatrix},\quad
 \bv_2=\begin{pmatrix}\frac{\sqrt{3}}{2} \\ \\ -\frac{1}{2} \end{pmatrix}.\]
The fundamental cell is chosen to be the parallelogram:
\begin{eqnarray}\label{cell}
\Omega = \{\theta_1\bv_1+\theta_2\bv_2: 0\le\theta_j\le 1, j=1, 2\},
\end{eqnarray}
with $|\Omega|$ standing for the area of $\Omega$.

The dual lattice
\begin{equation}
	\Lambda^* = \{m_1\bk_1+m_2\bk_2: \bm=(m_1, m_2)\in\mathbb{Z}^2\}=\mathbb{Z}\bk_1\oplus\mathbb{Z}\bk_2,
\end{equation}
is generated by the dual lattice vectors $\bk_1,~\bk_2$ which satisfy $\bk_i\cdot\bv_j = 2\pi\delta_{ij},~(i, j=1,2)$.
Specifically, the dual lattice vectors are
\begin{equation}
  \bk_1=\frac{4\sqrt{3}}{3}
    \begin{pmatrix}
      \frac12\\
      \frac{\sqrt{3}}{2}
    \end{pmatrix}
    ,\quad\bk_2=\frac{4\sqrt{3}}{3}
    \begin{pmatrix}
      \frac12\\
      -\frac{\sqrt{3}}{2}
    \end{pmatrix}.
\end{equation}
Throughout this work, we choose the parallelogram $\Omega^*$:
\begin{equation}\label{dualcell}
\Omega^* = \{\theta_1\bk_1+\theta_2\bk_2: -\frac12\le\theta_j\le \frac12, j=1, 2\},
\end{equation}
as the fundamental dual cell.

Let $\mathbf{A}=\frac{1}{3}(\bb v_1+\bb v_2)\in \Omega, \mathbf{B}=\frac{2}{3}(\bb v_1+\bb v_2)\in \Omega$. Define the honeycomb lattice $\Lambda_h=\left(\bb A+\Lambda\right) \bigcup\left( \bb B+\Lambda\right)$. Note that $\Lambda_h$ has two sites per unit cell.

Let $B_r(\bx_0)$ be the ball centered at $\bx_0$ with the radius $r$. Throughout this work, we require that $r<\frac{1}{2}|\bb A-\bb B|$ which implies that $B_r(\bb A)$ and $B_r(\bb B)$ are disjoint. We divide the fundamental cell into two parts, $\Omega_1=B_r(\bb A)\bigcup B_r(\bb B)$ and $\Omega_2=\Omega/\Omega_1$.  The interface $\Gamma$ on the fundamental cell  is defined as the intersection  of $\Omega_1$ and $\Omega_2$, i.e. $\Gamma = \Gamma_1\cap \Gamma_2$. 
Define the piece-wise honeycomb function 
\begin{equation}
   \epsilon(\bx)=\left\{\begin{array}{l}
  \epsilon_{\bb A}, \quad \text{if} \ \bx \in B_r(\bb A)+\Lambda, \\
  \epsilon_{\bb B}, \quad \text{if} \ \bx \in B_r(\bb B)+\Lambda,\\
   \epsilon_0, \,\quad \text{if} \ \bb x \in \Omega_2+\Lambda,
    \end{array}\right.
\end{equation}
where $\epsilon_j, ~ j=\bb A, \bb B,0$, are positive constants. $\epsilon_0$ is regarded as the value of the background and $\epsilon_{\bb A}, ~\epsilon_{\bb B}$ are the values against the background.  It is obvious that $\epsilon(\bx)$ is $\Lambda$-periodic, i.e., $\epsilon(\bx+\bv)=\epsilon(\bx)$ for all $\bv \in \Lambda$.

In this work, we use the following material weight as our prototype
\begin{equation}\label{eq:materialweight}
 W(\bx)=\begin{pmatrix}
                     \epsilon(\bx) & i\gamma \\
                     -i\gamma & \epsilon(\bx) \\
                   \end{pmatrix}^{-1}.
\end{equation}
This material weight corresponds to the magneto-optical material \cite{HR:07}. $\gamma\in \mathbb{R}$ is called Farady-rotation constant satisfying $\min(\epsilon(\bx)^2-\gamma^2)>c_0>0$, which ensures $ W(\bx)$ is uniformly elliptic. In real materials,  the strength of the Faraday-rotation is much smaller than the permitivity $\epsilon$, hence
\begin{equation}\label{eq:materialweight2}
W(\bx)\approx \epsilon(\bx)^{-1}I+\gamma\epsilon^{-2}\sigma_2,
\end{equation}
where $\sigma_2=\begin{pmatrix}0 & -i \\ i & 0 \end{pmatrix}$ is a Pauli matrix.

\subsection{Eigenvalue problem in a torus}
Consider the material weight of the form \eqref{eq:materialweight} or \eqref{eq:materialweight2}. $W(\bx)$ is $\Lambda$-periodic when $\gamma$ is constant. We can restrict our analysis in a torus by Floquet-Bloch theory. Before proceeding further, we introduce the following function space
\begin{eqnarray*}
&&L^2_{per}(\Lambda)=\left\{f(\bx)\in L^2_{loc}\left(\mathbb{R}^2,\mathbb{C}\right):\ f(\bx+\bv)=f(\bx), \forall \bv \in \Lambda, \bx \in \mathbb{R}^2 \right\}\\
&&L^2_{\bk}(\Lambda)=\left\{g(\bx):\  e^{-i\bk\cdot \bx}g(\bx)\in L^2_{per}(\Lambda) \right\}.
\end{eqnarray*}
Note that functions in $L^2_{\bk}(\Lambda)$ are quasi-periodic. Namely, if $g(\bx) \in L^2_{\bk}(\Lambda)$, then $g(\bx+\bv)=e^{i\bk\cdot\bv}g(\bx), \forall \bv \in \Lambda$.
Similarly, we can also define $H_{per}^s(\Lambda)$ and $H_{\bk}(\Lambda)$ in a standard way.

%In photonics, one of the main tasks is to study the adimissible electromagnetic modes.
%
%
%\footnote{\bf{YZ}, I don't want to lose the generality of our work and move the model reduction to appendix and claim that the physical setup there is just a special example. We can actually apply our method to other physical setups}An in-plane propagating electromagnetic mode can be divided into two decoupled components, TE mode $(E_1, E_2, H_3)$ and TM mode $(H_1, H_2, E_3)$, (see appendix for details). The longitudinal magnetic field $H_3$ satisfies the following eigenvalue problem after rescaling
%\begin{equation}\label{eq:eigen2}
%-\nabla\cdot W(\bx)\nabla  \Psi(\bx) = E \Psi(\bx), \quad \bx=(x_1,x_2) \in \mathbb{R}^2,
%\end{equation}
%and  longitudinal electric field $E_3$ statisfies
%\begin{equation}\label{eq:eigen3}
%-\Delta  \Psi(\bx) = E w(\bx) \Psi(\bx), \quad \bx=(x_1,x_2) \in \mathbb{R}^2,
%\end{equation}
%where $W(\bx)$ and $w(\bx)$ are material weights which are related to the permittivity and permeability. This work focus on the TE mode supported by honeycomb-based media though our method and result are also applicable to TM mode with some considerable modifications.
%
%%\subsection{Periodic material weight and Floquet-Bloch theory}
According to Floquet-Bloch theory, the spectrum of $\mathcal{L}^W$ in $L^2(\mathbb R^2)$ can be represented by the spectrum $\mathcal{L}^W$ in $L_{\bk}^2(\Lambda)$. Namely, we solve the following $L^2_\bb{k}(\Lambda)$-eigenvalue problem
\begin{equation}\label{eq:eigenLk}
\mathcal{L}^W \Phi(\bx) = E \Phi(\bx),\quad \quad \Phi(\bx) \in L^2_{\bb k}(\Lambda).
\end{equation}
Due to the periodicity, we can restrict $\bb k$ in the fundamental dual cell $\Omega^*$. For a fixed $\bb k\in \Omega^*$, there exists a sequence of pairs $\left(E_m(\bb k), \Phi_m(\bx;\bb k)\right), m=1, 2,\cdots$ satisfying the above eigenvalue problem. Here $E_m(\bb k),  ~m=1,2,\cdots$ are called dispersion band functions which have been ordered as $0<E_1(\bb k)\le E_2(\bb k)\le E_3(\bb k)\le \cdots$. The corresponding eigenfunctions $\Phi_m(\bx;\bb k)$ are referred  to as the Bloch waves. Moreover, the set $\left\{\Phi_m(\bb x;\bb k),~m\in \mathbb N,~\bb k\in \Omega^*\right\}$ forms a ``generalized'' basis of $L^2(\mathbb R^2)$ and the spectrum of $\mathcal{L}^W$ in $L^2(\mathbb{R}^2)$, $\sigma(\mathcal{L}^W)$, coincides with the Bloch spectrum, the union of the images of all the mappings $E_m(\bb k)$, i.e.,
\begin{equation}
\sigma(\mathcal{L}^W)= \bigcup_{m=1}^{\infty}\left[\inf_{\bb k \in\Omega^*} E_m(\bb k),\sup_{\bb k \in\Omega^*} E_m(\bb k)\right].
\end{equation}

%Alternatively, setting $\Phi_m(\bx;\bb k)=e^{i\bk\cdot \bx}\phi_m(\bx;\bb k)$, we translate the eigenvalue problem \eqref{eq:eigenLk} to the following $L^2_{per}(\Lambda)$-eigenvalue problem \begin{equation}\label{eq:eigenLp}
%\mathcal{L}^W(\bk) \phi(\bx) = E \phi(\bx),\quad \quad \phi(\bx) \in L^2_{per}(\Lambda).
%\end{equation}
% where
% \begin{equation}
% \mathcal{L}^W(\bk)=(\nabla+i\bk)\cdot W(\bx)(\nabla+i\bk).
% \end{equation}

In general, it is impossible to solve the eigenvalue problem \eqref{eq:eigenLk} analytically. A natural numerical scheme is the spectral method.  Namely, we can expand $W(\bx)$ and $\Phi(\bx)$ into their Fourier series. By truncating the series into finite terms, we can easily solve the reduced eigenvalue problem for a matrix. If $W(\bx)$ is smooth,  $ \Phi(\bx)$ is also smooth. We only need a few terms to approximate $W(\bx)$ and $\phi(\bx)$ due to the exponential accuracy. The shortcoming of this method is that the resulting matrix is not sparse. When we need a large number of terms to approximate $W(\bx)$, this method becomes costly and sometimes lead to wrong results. A typical scenario is that $W(\bx)$ changes greatly or is even discontinuous and this regime is exactly what we are going to handle.

%{\bf Add our method}
%In this work, we are interested in the material weight
%\begin{equation}\label{materialweight}
% W(\bx)=\begin{pmatrix}
%                     \epsilon(\bx) & i\gamma \\
%                     -i\gamma & \epsilon(\bx) \\
%                   \end{pmatrix}^{-1}.
%\end{equation}
%Here $\gamma\in \mathbb{R}$ is called Faradi-rotation constant satisfying $\min(\epsilon(\bx)^2-\gamma^2)>c_0>0$, which ensures $\bb W(\bx)$ is uniformly elliptic.

If $\gamma=0$, $W(\bx)$ is a honeycomb structured material defined in \cite{LWZ2019}, i.e., $W(\bx)$ is even, real and $\frac{2\pi}{3}$-rotation invariant. According to \cite{LWZ2019}, there generically exist the so-called Dirac points--conical singularities in the dispersion band functions $E_m(\bk)$ at $\bK=\frac{1}{3}(\bk_1-\bk_2), \bK'=-\bK$ for some $m$. If $\gamma\neq 0$ but is still a constant, the material weight $W(\bx)$ is now complex, local spectral gaps open near the Dirac points due to the complex-conjugate symmetry breaking.  %In Fig. xx, we use our method to demonstrate the existence of Dirac points and gap opening respectively.

\subsection{Honeycomb structured material weight with a line defect} Dirac points provide a mechanism to generate the so-called topological edge states via introducing a line defect. Define a transition function (referred as to domain wall function) $\kappa(\zeta) \in L^\infty(\mathbb{R},\mathbb{R})$ with $\kappa(\pm\infty)=\pm \kappa_\infty$. Without loss of generality, we require that $\kappa_\infty>0$. A typical example of this transition function is the step function
\begin{equation}\label{stepfun}
  \kappa(\zeta)=\left\{\begin{array}{ll}-\kappa_\infty, & \zeta<0,\\ 0,& \zeta=0,\\ +\kappa_\infty,& \zeta>0.\end{array}\right.
\end{equation}
Its smooth counterpart is $\kappa(\zeta)=\kappa_\infty\tanh(\zeta)$.

A line defected is introduced if we choose the Faradi-rotation $\gamma$ in \eqref{eq:materialweight} to be a transition function along a direction. Namely $\gamma=\kappa(\mathbf{n}\cdot \bx)$ where $\mathbf{n}\neq 0$ is the normal direction of the line defect. Obviously, if $\kappa(\zeta)$ is the step function \eqref{stepfun}, the line $\mathbf{n}^\perp\mathbb{R}$ is the interface of two different materials (we also call it an edge). In this work we take Zigzag edge as our prototype. In this case, $\mathbf{n}=\bk_2$ and the line $\mathbf{v}_1\mathbb{R}$ is the edge. Note that $W(\bx)$ is periodic along $\bv_1$ direction but loses the periodicity along $\bv_2$ direction.

Let $\Sigma = \mathbb{R}^2/\mathbb{Z}\bv_1$ be a cylinder.  The fundamental domain for $\Sigma$ is
$\Omega_{\Sigma} \equiv \{ \tau_1\bv_1+\tau_2\bv_2: 0 \le \tau_1 \le 1, \tau_2 \in \mathbb{R}\}$. Define the function spaces
\begin{eqnarray*}
&&L^2_{per}(\Sigma)=\left\{f(\bx) \in L^2\left(\Omega_{\Sigma},\mathbb{C}\right):\ f(\bx+\bv_1)=f(\bx)\right\}\\
&&L^2_{k_\parallel}(\Sigma)=\left\{g(\bx)\in L^2\left(\Omega_{\Sigma},\mathbb{C}\right):\  g(\bx+\bv_1)=e^{ik_\parallel}g(\bx) \right\}.
\end{eqnarray*}

%The spectrum of the $\mathcal{L}^W$ in $L^2(\mathbb{R}^2)$ can be decomposed in to $L^2_{k_\parallel}(\Sigma)$
We solve the eigenvalue problem \eqref{eq:eigen} in $L^2_{k_\parallel}(\Sigma)$. For a given $k_\parallel\in [0, 2\pi]$, the continuous spectrum of $\mathcal{L}^W$ can be obtained by letting $\bk_2\cdot \bx $ tend to $\pm \infty$. Indeed,
\begin{equation}
\sigma_{c, k_\parallel}(\mathcal{L}^W)=\bigcup_{m=1}^{\infty}\left[\inf_{\lambda \in[-1/2, 1/2]} E_m(\lambda \bk_2+\frac{1}{2\pi}k_\parallel\bk_1),\sup_{\lambda \in[-1/2, 1/2]} E_m(\lambda \bk_2+\frac{1}{2\pi}k_\parallel\bk_1)\right]
\end{equation}

Due to the subtle symmetries of the setup, there exists point spectrum and the corresponding eigenfunctions are referred as to edge states. Namely, $\Psi(\bx)$ satisfies
\begin{align}
&\mathcal{L}^W\Psi(\bx;\kpar) = E(\kpar)\Psi(\bx;\kpar), \label{equ:dw_evp}\\
&\Psi(\bx+\bv_1;\kpar)=e^{\I\kpar}\Psi(\bx;\kpar),\label{equ:pseudo-per}\\
&\Psi(\bx;\kpar) \to 0\ \ {\rm as}\ \ |\bx\cdot\bk_2|\to\infty.  \label{equ:localized} .
\end{align}

\section{Unfitted Nitsche's method}\label{sec:disp}  In this section,  we propose the Floquet-Bloch theory based unfitted Nitsche's methods for simulating topological materials. We first focus on the computing  the bulk dispersion relations.  Then, we extend the method to computing wave modes in topological materials.  In this paper, we use $C$, with or without a  subscript, to denote a generic constant,  which can be different at different occurrences. In addition, it is independent of the mesh size and the location of the interface.

\subsection{Unfitted Nitsche's method for computing dispersion relation} In this section, we are interested in the efficient numerical solution of the eigenvalue problem \eqref{eq:eigenLk} in a torus. One of the main numerical difficulties is the existence of the high contrast in the material weight $W(\bx)$, which may lower the regularity of eigenfunctions.  To model the discontinuity, we use the interface
conditions as \cite{LI2006} and the $L^2_\bb{k}(\Lambda)$-eigenvalue problem \eqref{eq:eigenLk}  can be converted into the following interface $L^2_\bb{k}(\Lambda)$-eigenvalue problem
\begin{align}\label{eq:iteigenLk}
&\mathcal{L}^W \Phi(\bx) = E \Phi(\bx), \\
& \left \llbracket \Psi\right \rrbracket  = \left \llbracket W\frac{\partial \Psi}{\partial n}\right \rrbracket = 0, \quad \text{on } \Gamma;
\label{equ:tb4}
\end{align}
where  $\Phi(\bx) \in L^2_{\bb k}(\Lambda)$,  $\left\llbracket  v\right \rrbracket$ is the jump in value of a function $v$  crossing the interface $\Gamma$, and $n$ is the unit outer normal vector of $\Gamma$.

To deal with quasi-periodicity of functions in $ L^2_{\bb k}(\Lambda)$, we apply the Floquet-Bloch transform $\Phi(\bx;\bb k)=e^{i\bk\cdot \bx}\phi(\bx;\bb k)$.
We transfer the eigenvalue problem \eqref{eq:iteigenLk}--\eqref{equ:tb4} to the following interface $L^2_{per}(\Lambda)$-eigenvalue problem
\begin{align}
& \mathcal{L}^W(\bk) \phi(\bx) = E(\bk) \phi(\bx),\label{eq:iteigenLp}\\
& \left \llbracket \phi\right \rrbracket  = \left \llbracket W(\nabla + \I\bk)\phi\cdot n\right \rrbracket = 0, \quad \text{on } \Gamma; \label{eq:incond}
\end{align}
 where $ \phi(\bx) \in L^2_{per}(\Lambda)$ and
 \begin{equation}
 \mathcal{L}^W(\bk)=(\nabla+\I\bk)\cdot W(\bx)(\nabla+\I\bk).
 \end{equation}

 To address the numerical challenge brought by the interface condition \eqref{eq:incond},   the most straightforward idea is to use finite element methods with body-fitted meshes \cite{Ba1970, CZ1998} to resolve the discontinuity. However,  this brings two new {\it difficulties}: (1) the body-fitted meshes, in general, are unstructured meshes on which it is difficult to impose the periodic boundary conditions;  (2) it is technically hard to generate body-fitted meshes, in special for topological materials with complicated geometric structures and huge number of interfaces.  In this paper, we avoid those two difficulties by introducing the unfitted Nitsche's methods \cite{HH2002, BCHLM2015}.

\subsubsection{Unfitted Nitsche's method in a torus}
 This subsection is devoted to the unfitted Nitsche's method for  the interface $L^2_{per}(\Lambda)$-eigenvalue problem \eqref{eq:iteigenLp}--\eqref{eq:incond}.  \emph{To avoid  the generation of  body-fitted meshes for complicate topological structure and simplify the imposing of periodical boundary condition, we partition  the fundamental cell $\Omega$ using uniform triangular meshes.} The uniform triangulation is obtained by dividing $\Omega$ into $N^2$ sub-rhombuses with mesh size $h = \frac{\|\bv_1\|_2}{N}$ and then splitting each sub-rhombs into two isosceles triangles.  In addition, we assume that $N$ is sufficiently  large such that the following assumption holds:

\begin{assumption}\label{ass:inter}
The interface $\Gamma$ intersects each  interface element boundary $\partial K$ exactly twice, and each open edge at most once.
\end{assumption}

 The elements of $\mathcal{T}_h$ can be categorized into two different classes: regular elements and interface elements.  An element $\tau$ is called an interface element if
 the interface $\Gamma$ passes through $K$.  The  set of all elements that intersect the interface $\Gamma$ is denoted by $\mathcal{T}_{\Gamma,h}$. Then, it is easy to see that
\begin{equation}
\mathcal{T}_{\Gamma,h} = \left\{ K\in \mathcal{T}_h:  \Gamma\cap \overline{K} \neq \emptyset \right\}.
\end{equation}
Denote the union of all such type elements by
\begin{equation}
\Omega_{\Gamma,h} = \bigcup\limits_{K\in \mathcal{T}_{\Gamma,h}}K,
\end{equation}
and the set of all elements covering subdomain $\Omega_i$ by
\begin{equation}
\mathcal{T}_{i,h} = \left\{ K\in \mathcal{T}_h:   \overline{\Omega_i}\cap \overline{K} \neq \emptyset\right\}, \quad i = 1, 2.
\end{equation}
Let
\begin{equation}
\Omega_{i,h} = \bigcup\limits_{K\in \mathcal{T}_{i,h}}K, \quad \omega_{i,h} = \bigcup\limits_{K\in \mathcal{T}_{i,h}\setminus \mathcal{T}_{\Gamma,h}}K,\quad i = 1, 2.
\end{equation}
Figure \ref{fig:mesh} gives an illustration of $\Omega_{i,h}$ and $\omega_{i,h}$.  We remark that
$\Omega_{1,h}$ and $\Omega_{2,h}$ overlap on $\Omega_{\Gamma,h}$, which is shown as the blue part in Figures \ref{fig:in_mesh} and \ref{fig:ex_mesh}.

\begin{figure}
   \centering
   \subcaptionbox{\label{fig:meshwhole}}
  {\includegraphics[width=0.325\textwidth]{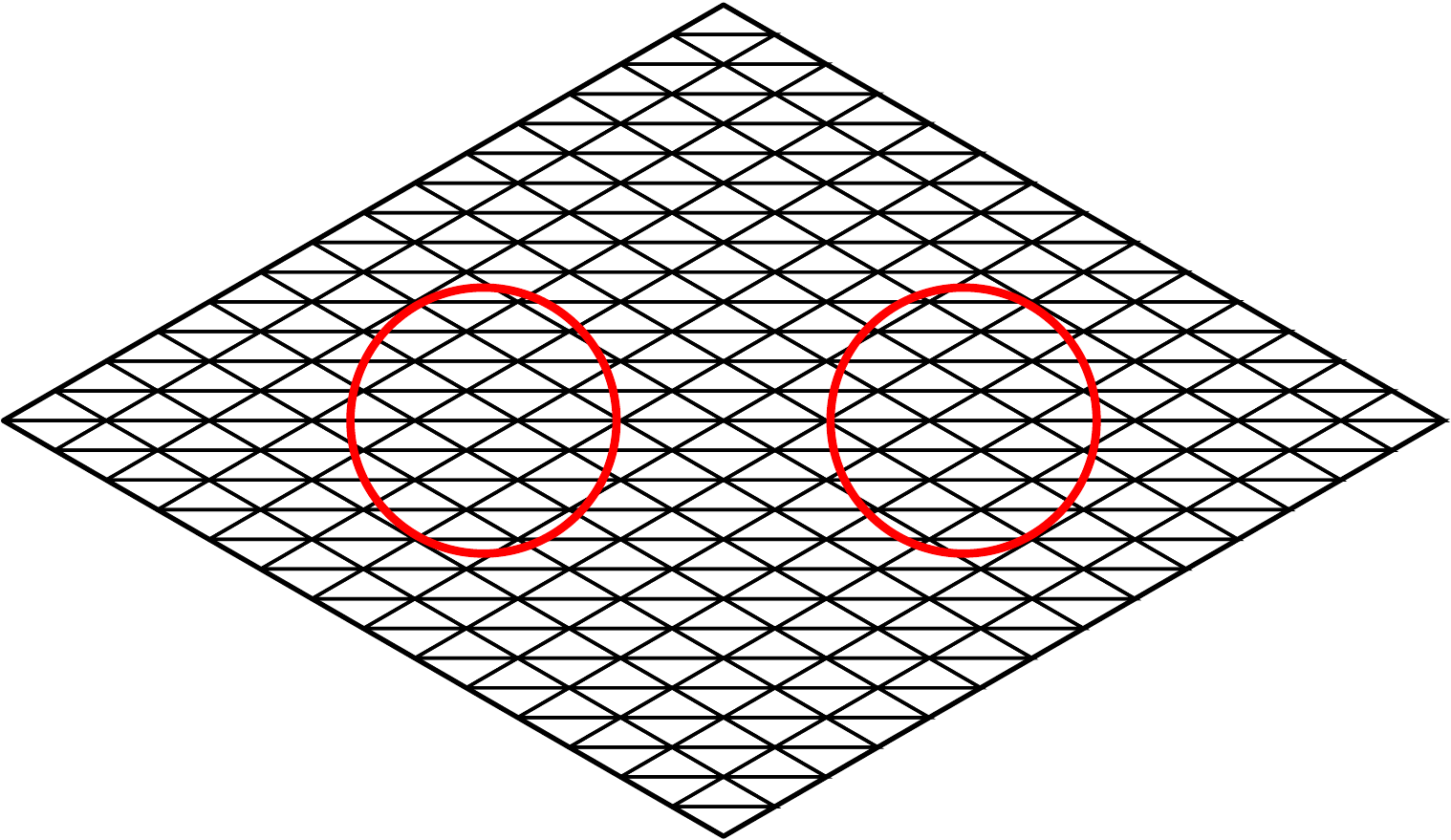}}
  \subcaptionbox{\label{fig:in_mesh}}
   {\includegraphics[width=0.325\textwidth]{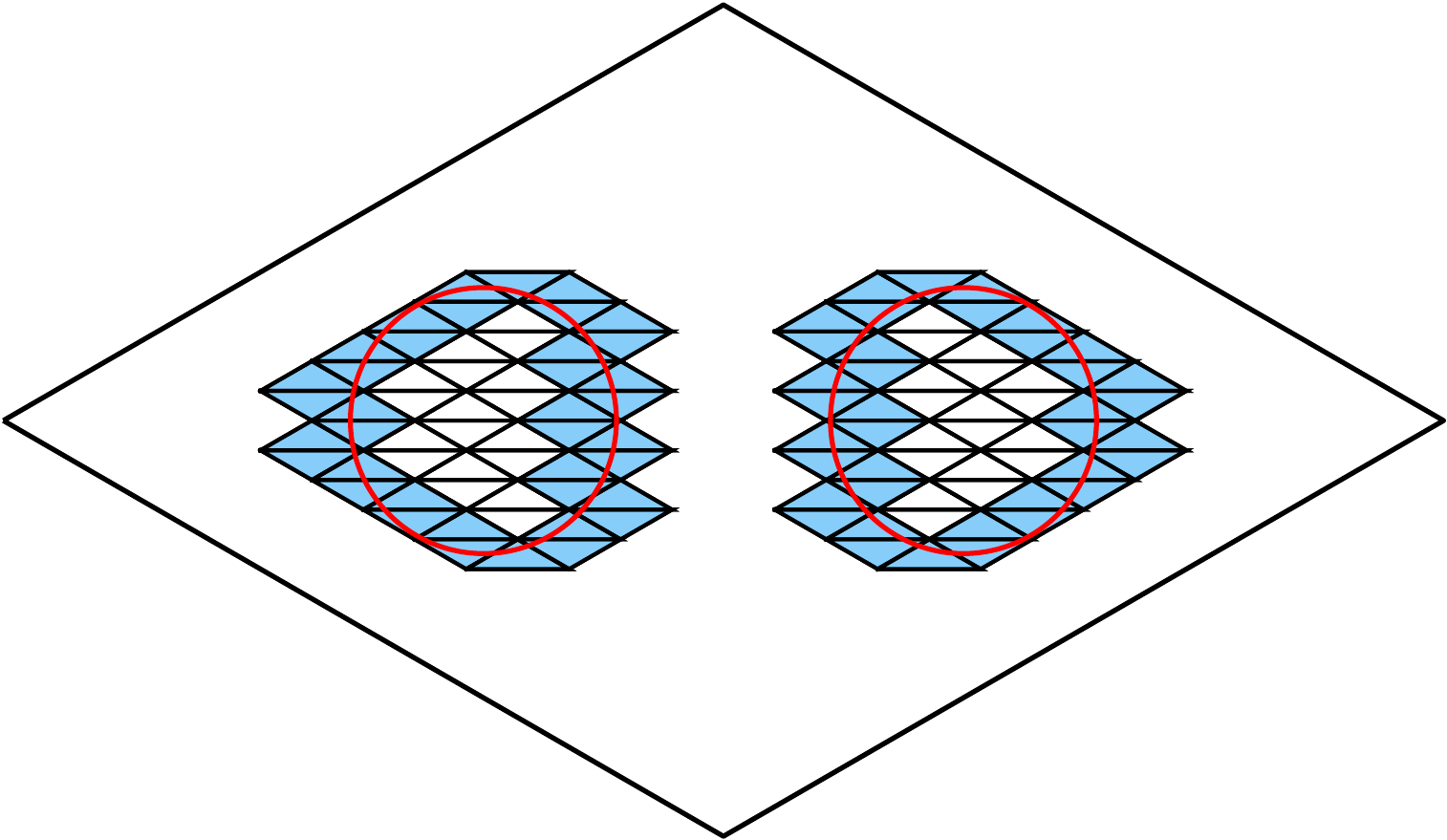}}
  \subcaptionbox{\label{fig:ex_mesh}}
  {\includegraphics[width=0.325\textwidth]{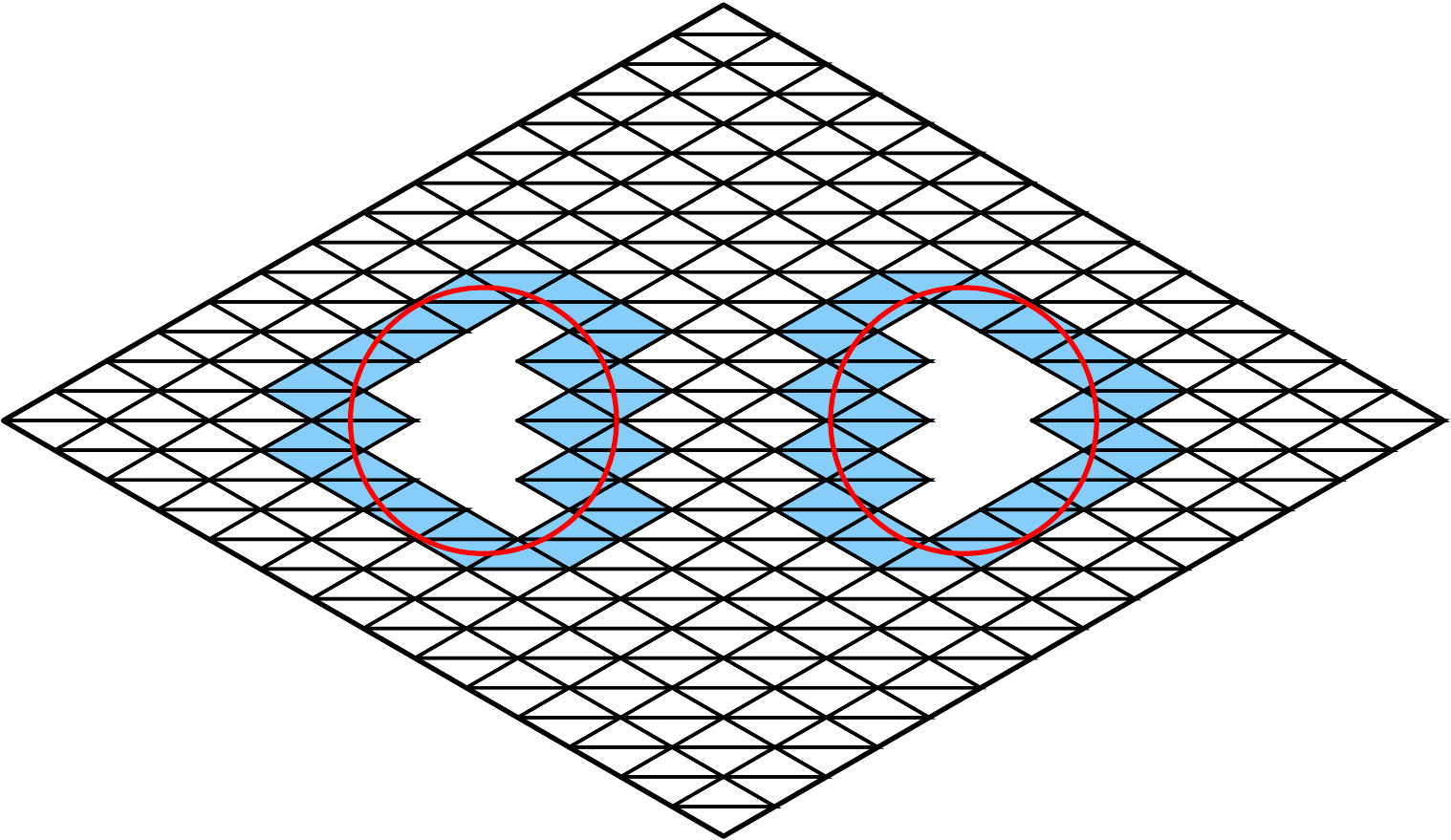}}
   \caption{Triangulation $\mathcal{T}_h$ on fundamental cell. (a): Triangulation $\mathcal{T}_h$; (b): Triangulation $\mathcal{T}_{1,h}$;
   (c): Triangulation $\mathcal{T}_{2,h}$.}\label{fig:mesh}
\end{figure}

One of  main ingredients of  the unfitted Nitsche's method is to define the finite element space  as
the direct sum of the standard  continuous linear  finite element space on $\Omega_{i,h}$.
For such a purpose, we let  $V_{i,h}$ be the standard continuous linear finite element space on $\Omega_{i,h}$, i.e.
\begin{equation}
V_{i,h} = \left\{ v \in C^0(\Omega_{i,h}):  v|_{K} \in \mathbb{P}_1(K) \text{ for any } K \in \mathcal{T}_{i,h}\right\},\quad i = 1, 2,
\end{equation}
where $\mathbb{P}_k(K)$ is the space of polynomials with  degree less than or equal to  $k$ on  the element $K$.
The finite element space  for  the unfitted Nitsche's method is defined as $V_h = V_{1,h} \oplus V_{2,h}$. In other words,
 \begin{equation}
V_h = \left\{ v_h = (v_{1,h}, v_{2,h}): v_{i,h}\in V_{i,h}, \, i = 1,2 \right\}.
\end{equation}
To impose the periodic boundary condition,  we introduce $V_{h, per}$ as a subspace of $V_h$ which is defined as
\begin{equation}
V_{h,per} = \left\{ v_h\in V_h: v_h(\bx+\bv)=v_h(\bx), \forall \bv \in \Lambda, \bx \in \mathbb{R}^2 \right\}.
\end{equation}
Note that a function in $V_h$ (or $V_{h, per}$)  is a vector-valued function from $\mathbb{R}^2\mapsto\mathbb{R}^2$, which has a zero component in $\omega_{1,h}\bigcup\omega_{2,h}$ but in general two non-zero components in $\mathcal{T}_{\Gamma,h}$.  It means that there are two sets
of basis functions for any element $K$ in $\mathcal{T}_{\Gamma,h}$: one for $V_{1,h}$ and the other for $V_{2,h}$.

For any interface element $K$ in $\mathcal{T}_{\Gamma,h}$,  let  $K_i = K \cap \Omega_i$ be  the part of $K$ in $\Omega_i$, where
$|K_i|$ is the area of $K_i$. Similarly, let  $\Gamma_K = \Gamma\cap K$ be the part of $\Gamma$ in $K$, where  $|\Gamma_K|$ is
the measure of $\Gamma_K$ in $\mathbb{R}^1$.
Different from the interface elliptic problem considered for  unfitted Nitsche's method in \cite{HH2002, GY20183},
the material weight coefficient $W(\bx)$ is complex and matrix-valued.   To increase the robustness of the Nitsche's method,
 we  introduce two weights using the maximal norm of $W$  inspired by   \cite{AHD2012}
\begin{equation}\label{eq:kapdef}
\kappa_1|_K = \frac{\|W_2\|_{\infty} |K_1|}{\|W_2\|_{\infty}|K_1|+\|W_1\|_{\infty}|K_2|},
\quad \kappa_2|_K = \frac{\|W_1\|_{\infty}|K_2|}{\|W_2\|_{\infty}|K_1|+\|W_1\|_{\infty}|K_2|},
\end{equation}
which satisfies that $ \kappa_1 + \kappa_2=1$.
Then,  we define the weighted  averaging of a function $ v_h$ on the interface $\Gamma$ as
\begin{equation}\label{eq:wadef}
\dgal{v_h} = \kappa_1v_{1,h} +\kappa_2v_{2,h}.
%\quad  \dgal{v_h} ^{\ast}= \kappa_2v_{1,h} +\kappa_1v_{2,h}.
\end{equation}
Furthermore, we define the constant  $\lambda_K$ as
\begin{equation}\label{eq:lamdef}
\lambda_K = \frac{h\|W_1\|_{\infty}\|W_2\|_{\infty}|\Gamma_K|}{\|W_2\|_{\infty}|K_1| + \|W_1\|_{\infty}|K_2|}.
 \end{equation}
 Based on $\lambda_K$, we define element-wise parameter $\lambda$ as  $\lambda|_{K} = \hat{\lambda} \lambda_K$ for some large enough positive number $\hat{\lambda}$ (called stabilizing parameter).
It is easy to see  that $\lambda_K \le \|W\|_{\infty} := \max(\|W_1\|_{\infty}, \|W_2\|_{\infty})$.

The unfitted Nitsche's method for the interface $L^2_{per}(\Lambda)$-eigenvalue problem \eqref{eq:iteigenLp}--\eqref{eq:incond} is to find the
the eigenpair $(\phi_h, E_h(\bk))\in V_{h, per}\times \mathbb{R}$ with $\phi_h \neq 0$ such that
\begin{equation}\label{eq:fem}
 a_h(\phi_h,  q_h) = E_h(\bk)b(\phi_h,  q_h),\quad  \forall  q_h \in V_{h, per},
\end{equation}
where
\begin{equation}\label{eq:disbil}
\begin{split}
  a_h(\phi_h ,  q_h) = &\sum_{i=1}^2\int_{\Omega_i}W(\nabla +\I\bk) \phi_h\cdot \overline{(\nabla + \I\bk) q_h}d\bx - \int_{\Gamma}
   \dgal{    W(\nabla+\I\bk)\phi_h\cdot n }\llbracket \overline{ q_h} \rrbracket ds-\\
  &\int_{\Gamma} \dgal{  \overline{ W(\nabla+\I\bk) q_h\cdot n }}\llbracket \phi_h \rrbracket ds
 +\frac{1}{h} \int_{\Gamma} \lambda \llbracket \phi_h \rrbracket \llbracket \overline{ q_h} \rrbracket ds,
\end{split}
\end{equation}
and
\begin{equation}\label{eq:disbb}
  b(\phi_h ,  q_h) = \int_{\Omega}\phi_h\cdot \overline{ q_h}d\bx,
\end{equation}
where  $h$ is the mesh size.
The weak formulation \eqref{eq:fem} is called the Nitsche's weak formulation.

\subsubsection{Well-posedness of the unfitted Nitsche's method in a torus}
In this part,  we shall show the unfitted Nistche's method is well-posed.
We start by showing
the following consistency result:
\begin{lemma}
Let  $(\phi, E)$ be the eigenpair of the interface $L^2_{per}(\Lambda)$-eigenvalue problem \eqref{eq:iteigenLp}--\eqref{eq:incond}.
Then $(\phi, E)$ satisfies
\begin{equation}\label{eq:consist}
 a_h(\phi,  q) = E(\bk) b(\phi,  q),\quad  \forall  q \in H^1_{per},
\end{equation}
\vspace{-0.2in}
\end{lemma}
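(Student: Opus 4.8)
The plan is to verify the consistency identity by starting from the strong form \eqref{eq:iteigenLp}, multiplying by a test function, integrating by parts over each subdomain separately, and then checking that all the extra Nitsche terms in $a_h$ vanish on the exact solution. First I would take $q \in H^1_{per}$ (so that $\llbracket q \rrbracket = 0$ across $\Gamma$ and $q$ is genuinely single-valued, unlike a generic element of $V_{h,per}$), multiply \eqref{eq:iteigenLp} by $\overline{q}$, and integrate over $\Omega_1$ and $\Omega_2$ separately. On each $\Omega_i$, Green's identity applied to $(\nabla+\I\bk)\cdot W(\bx)(\nabla+\I\bk)\phi$ produces a volume term $\int_{\Omega_i} W(\nabla+\I\bk)\phi \cdot \overline{(\nabla+\I\bk)q}\,d\bx$ plus a boundary term on $\partial\Omega_i$. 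Because of periodicity the contributions on the outer boundary $\partial\Omega$ cancel between opposite edges, so only the contribution on the interface $\Gamma$ survives, namely $-\int_\Gamma W(\nabla+\I\bk)\phi\cdot n_i\, \overline{q}\, ds$ from each side, where $n_i$ is the outward normal of $\Omega_i$.

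The key step is then to combine the two interface contributions. Writing $n = n_1 = -n_2$, the sum of the two boundary terms is $-\int_\Gamma \llbracket W(\nabla+\I\bk)\phi\cdot n\rrbracket\,\overline{q}\,ds$, which is exactly zero by the flux-continuity interface condition in \eqref{eq:incond}. Hence the only nonzero pieces are the volume integrals, and we obtain $\sum_{i=1}^2 \int_{\Omega_i} W(\nabla+\I\bk)\phi\cdot\overline{(\nabla+\I\bk)q}\,d\bx = E(\bk)\int_\Omega \phi\cdot\overline{q}\,d\bx = E(\bk)\, b(\phi,q)$. It remains to observe that in the definition \eqref{eq:disbil} of $a_h$ applied to the pair $(\phi, q)$, the two consistency (flux) terms and the penalty term all contain a jump factor: the terms $\int_\Gamma \dgal{W(\nabla+\I\bk)\phi\cdot n}\llbracket\overline{q}\rrbracket\,ds$ and $\frac1h\int_\Gamma \lambda\llbracket\phi\rrbracket\llbracket\overline{q}\rrbracket\,ds$ vanish because $\llbracket\overline{q}\rrbracket = 0$ (as $q\in H^1_{per}$ is single-valued), while the term $\int_\Gamma \dgal{\overline{W(\nabla+\I\bk)q\cdot n}}\llbracket\phi\rrbracket\,ds$ vanishes because $\llbracket\phi\rrbracket = 0$ by the first interface condition in \eqref{eq:incond}. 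Therefore $a_h(\phi,q)$ reduces to precisely the sum of volume integrals, giving \eqref{eq:consist}.

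The main technical point to be careful about — rather than a genuine obstacle — is ensuring enough regularity of $\phi$ on each subdomain for the integration by parts to be legitimate: one needs $\phi|_{\Omega_i} \in H^2$ (or at least enough to make $W(\nabla+\I\bk)\phi\cdot n$ a well-defined trace in $L^2(\Gamma)$ and Green's identity valid). This follows from interior elliptic regularity together with the smoothness of $\Gamma$ (it is a union of circular arcs) and the piecewise-constant, uniformly elliptic nature of $W$; since $\phi \in L^2_{per}(\Lambda)$ solves \eqref{eq:iteigenLp} with the interface conditions \eqref{eq:incond}, standard transmission-problem regularity gives $\phi|_{\Omega_i}\in H^2(\Omega_i)$, which is what is implicitly being used. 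A secondary bookkeeping item is the handling of the $\I\bk$ shift: since $\bk$ is constant, $(\nabla+\I\bk)$ behaves like a constant-coefficient perturbation of $\nabla$ and the integration-by-parts formula carries through verbatim with $\nabla$ replaced by $(\nabla+\I\bk)$, the adjoint being $-(\nabla + \I\bk)$ acting on the complex conjugate; no extra boundary contributions arise beyond those already accounted for.
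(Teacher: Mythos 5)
Your proposal is correct and follows essentially the same route as the paper's proof: observe that all jump terms in $a_h$ vanish for $\phi$ and $q$ (so $a_h$ reduces to the standard bilinear form), then apply Green's formula on each subdomain $\Omega_i$ and use the flux-continuity condition \eqref{eq:incond} to cancel the interface contributions. The paper states this in two lines; you have merely filled in the details, including the regularity remark that the paper leaves implicit.
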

\begin{proof}
 For any $\phi, q\in H^1_{per}$, we notice that  $\llbracket \phi \rrbracket  = \llbracket q \rrbracket = 0$ and hence  $a_h(\cdot, \cdot)$ is reduced to  the standard bilinear formulation. Then \eqref{eq:consist} follows  by the Green's formula on each subdomain $\Omega_i$
 and the interface condition \eqref{eq:incond}.
\end{proof}

Thanks to the above Lemma, we can easily deduce following corollary which is known as the Galerkin  orthogonality:
\begin{corollary}
 Let  $(\phi, E(\bk))$ be the eigenpair of the interface $L^2_{per}(\Lambda)$-eigenvalue problem \eqref{eq:iteigenLp}--\eqref{eq:incond}
 and $(\phi_h, E_h(\bk))$ be the corresponding approximate   eigenpair  by the unfitted Nitsche's method. Then we have
 \begin{equation}\label{eq:gal}
 a_h(\phi-\phi_h,  q_h)  = 0, \quad \forall  q_h \in V_{h, per}.
\end{equation}
\vspace{-0.2in}
\end{corollary}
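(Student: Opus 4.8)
The plan is to deduce \eqref{eq:gal} from the consistency identity \eqref{eq:consist} and the discrete eigenvalue equation \eqref{eq:fem} by subtraction. The subtlety — and really the only content — is that \eqref{eq:consist} is asserted for test functions in $H^1_{per}$, whereas \eqref{eq:gal} tests against $q_h\in V_{h,per}$; since a function in $V_{h,per}$ carries two values on each interface element and hence has $\llbracket q_h\rrbracket\neq 0$, it does not lie in $H^1_{per}$, and the one-line reasoning behind the Lemma (all jumps vanish, so $a_h$ reduces to the standard bilinear form) no longer applies. So the first step I would carry out is to re-establish consistency, with the same right-hand side, for an arbitrary $q_h\in V_{h,per}$.

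To do this, fix $q_h=(q_{1,h},q_{2,h})\in V_{h,per}$ and evaluate $a_h(\phi,q_h)$ term by term from \eqref{eq:disbil}. On the two volume integrals I would apply Green's formula for the shifted gradient $(\nabla+\I\bk)$ separately on $\Omega_1$ and $\Omega_2$, on each of which $\phi$ is smooth and satisfies \eqref{eq:iteigenLp}; the interior parts then sum to $E(\bk)\sum_i\int_{\Omega_i}\phi\cdot\overline{q_{i,h}}\,d\bx=E(\bk)\,b(\phi,q_h)$. The boundary terms produced by Green's formula split into a contribution on $\partial\Omega$, which cancels in opposite-face pairs because $\phi$ and $q_h$ are $\Lambda$-periodic while the paired outward normals are opposite, and a contribution on $\Gamma$, which — using that the $\Omega_1$- and $\Omega_2$-sided normals there are opposite — collapses to $\int_{\Gamma}(W(\nabla+\I\bk)\phi\cdot n)\,\llbracket\overline{q_h}\rrbracket\,ds$. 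The flux condition $\llbracket W(\nabla+\I\bk)\phi\cdot n\rrbracket=0$ in \eqref{eq:incond} makes the two one-sided fluxes coincide, so (since $\kappa_1+\kappa_2=1$) the weighted average $\dgal{W(\nabla+\I\bk)\phi\cdot n}$ equals this common value, and the term $-\int_{\Gamma}\dgal{W(\nabla+\I\bk)\phi\cdot n}\,\llbracket\overline{q_h}\rrbracket\,ds$ of $a_h$ cancels it exactly. Finally, the remaining two terms of $a_h(\phi,q_h)$, namely $-\int_{\Gamma}\dgal{\overline{W(\nabla+\I\bk)q_h\cdot n}}\,\llbracket\phi\rrbracket\,ds$ and $\frac{1}{h}\int_{\Gamma}\lambda\,\llbracket\phi\rrbracket\,\llbracket\overline{q_h}\rrbracket\,ds$, vanish because $\llbracket\phi\rrbracket=0$ by \eqref{eq:incond}. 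This establishes $a_h(\phi,q_h)=E(\bk)\,b(\phi,q_h)$ for every $q_h\in V_{h,per}$.

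Once that is in hand, I would take any $q_h\in V_{h,per}$, subtract \eqref{eq:fem} from the extended identity above, and invoke the linearity of $a_h(\cdot,q_h)$ in its first slot to obtain \eqref{eq:gal}. I expect the element-wise Green's formula, together with the twofold cancellation (the two periodic contributions on $\partial\Omega$ against each other, and the interface flux term against the weighted-average term), to be the step demanding the most care: keeping the orientation of $\Gamma$, the sign convention for $\llbracket\cdot\rrbracket$, and the pairing of opposite faces of $\partial\Omega$ mutually consistent is precisely where a misplaced sign would break the identity, and it is the only place where the interface conditions $\llbracket\phi\rrbracket=\llbracket W(\nabla+\I\bk)\phi\cdot n\rrbracket=0$ and the partition-of-unity property $\kappa_1+\kappa_2=1$ are actually used.
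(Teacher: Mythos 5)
Your overall route---consistency of $a_h$ for the exact eigenpair, then subtraction of the discrete equation---is exactly the paper's, which simply declares the corollary a direct consequence of the consistency lemma. Where you go beyond the paper is in the first step, and rightly so: the lemma is proved only for test functions in $H^1_{per}$, whose jumps vanish, whereas $V_{h,per}\not\subset H^1_{per}$ because a discrete function carries two independent values on each interface element. Your element-wise Green's formula on $\Omega_1$ and $\Omega_2$, the cancellation of the $\partial\Omega$ contributions by periodicity, the identification of the one-sided fluxes via $\llbracket W(\nabla+\I\bk)\phi\cdot n\rrbracket=0$ together with $\kappa_1+\kappa_2=1$, and the vanishing of the two remaining terms via $\llbracket\phi\rrbracket=0$ is precisely what is needed to make the consistency identity legitimate against $V_{h,per}$; the paper glosses over this point entirely.

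The gap is in your last step. Subtracting \eqref{eq:fem} from the extended consistency identity gives
\[
a_h(\phi-\phi_h,q_h)=E(\bk)\,b(\phi,q_h)-E_h(\bk)\,b(\phi_h,q_h),
\]
and the right-hand side does not vanish: $E(\bk)\neq E_h(\bk)$ and $\phi\neq\phi_h$ in general, so nothing cancels. For a source problem the two right-hand sides coincide and the subtraction closes; for an eigenvalue problem it does not, and \eqref{eq:gal} as literally stated cannot be obtained this way (nor does the paper obtain it---it asserts the corollary without proof). What is actually used later in the error analysis is Galerkin orthogonality for the solution operators of the source problems \eqref{eq:source} and \eqref{eq:dissource}, namely $a_h(Tf-T_hf,g_h)=0$ for all $g_h\in S_h$, where the common right-hand side $(f,g_h)$ does cancel upon subtraction. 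You should either prove the orthogonality in that form, or record explicitly the residual identity displayed above rather than claiming it equals zero.
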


To analyze the stability  of the  bilinear form $a_h(\cdot, \cdot)$, we introduce the following mesh-dependent norm \cite{BCHLM2015,HH2002}
\begin{equation}\label{def:md_norm}
|||\phi|||_h^2 = \| (\nabla + \I\bk) \phi\|_{0, \Omega_1\cup \Omega_2}^2
+\sum\limits_{K\in \mathcal{T}_{\Gamma,h}}h^{-1}\|\llbracket \phi\rrbracket\|_{0,\Gamma_K}^2.
\end{equation}

We prepare our proof of  the stability of the  bilinear form  by establishing the following Lemma, whose  proof is given in Appendix A.
\begin{lemma}\label{lem:imtr}
Let $\phi_h$ be a finite element function in $V_h$. Then the following inequalities hold:
 \begin{align}
&\|\phi_{i,h}\|^2_{0, \Gamma_K}  \le C_1\frac{h^2|\Gamma_K|}{|K_i|} \|\nabla \phi_{i,h}\|^2_{0, K_i}, \label{eq:l2tr}\\
&   \|\nabla \phi_{i,h}\|^2_{0, \Gamma_K}\le  C_2\frac{|\Gamma_K|}{|K_i|} \|\nabla \phi_{i,h}\|^2_{0, K_i} \label{eq:h1tr}.
\end{align}
\vspace{-0.2in}
\end{lemma}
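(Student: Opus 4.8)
The plan is to prove both trace inequalities \eqref{eq:l2tr} and \eqref{eq:h1tr} by a scaling argument, but one that is adapted to the \emph{cut element} $K_i = K\cap\Omega_i$ rather than the whole element $K$. The essential point — and the reason this differs from the trace inequalities in \cite{BCHLM2015,HH2002} — is that I want the right-hand side to involve only the gradient over $K_i$, not over all of $K$. First I would fix an interface element $K\in\mathcal{T}_{\Gamma,h}$ and recall that, under Assumption~\ref{ass:inter}, $\Gamma$ enters and leaves $\partial K$ through exactly two edges, so $\Gamma_K$ is a single curve (for linear elements effectively a chord) separating $K$ into $K_1$ and $K_2$. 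Because $\phi_{i,h}$ is affine on $K$, I can work with an explicit representation: write $\phi_{i,h}(\bx) = \phi_{i,h}(\bx_0) + \nabla\phi_{i,h}\cdot(\bx-\bx_0)$ for a chosen reference point $\bx_0\in K_i$, so that $\nabla\phi_{i,h}$ is a constant vector on $K$.

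For \eqref{eq:h1tr} the argument is short: $\nabla\phi_{i,h}$ is a constant vector, so $\|\nabla\phi_{i,h}\|_{0,\Gamma_K}^2 = |\Gamma_K|\,|\nabla\phi_{i,h}|^2$ and $\|\nabla\phi_{i,h}\|_{0,K_i}^2 = |K_i|\,|\nabla\phi_{i,h}|^2$, and dividing gives \eqref{eq:h1tr} with $C_2 = 1$ (the constant is in fact exact here, no shape-regularity needed). The work is in \eqref{eq:l2tr}. Here I would proceed in two steps. Step one: control $\phi_{i,h}$ on $K_i$ in terms of its gradient and one boundary value. Since $\phi_{i,h}$ is affine, for $\bx\in K_i$ I have $|\phi_{i,h}(\bx) - \overline{\phi_{i,h}}^{K_i}| \le \operatorname{diam}(K_i)\,|\nabla\phi_{i,h}| \le h\,|\nabla\phi_{i,h}|$, where $\overline{\phi_{i,h}}^{K_i}$ is the mean over $K_i$; this is a Poincaré-type bound on the cut piece. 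Step two: transfer this from $K_i$ to $\Gamma_K$. Using the same affine structure, for $\bx\in\Gamma_K$, $|\phi_{i,h}(\bx)-\overline{\phi_{i,h}}^{K_i}| \le h\,|\nabla\phi_{i,h}|$ as well (both points lie in $\overline K$), so $\|\phi_{i,h}-\overline{\phi_{i,h}}^{K_i}\|_{0,\Gamma_K}^2 \le |\Gamma_K|\,h^2|\nabla\phi_{i,h}|^2 = \frac{h^2|\Gamma_K|}{|K_i|}\|\nabla\phi_{i,h}\|_{0,K_i}^2$. Combining the two steps via the triangle inequality, $\|\phi_{i,h}\|_{0,\Gamma_K}^2 \le 2\|\phi_{i,h}-\overline{\phi_{i,h}}^{K_i}\|_{0,\Gamma_K}^2 + 2|\Gamma_K|\,|\overline{\phi_{i,h}}^{K_i}|^2$, so the remaining task is to bound $|\overline{\phi_{i,h}}^{K_i}|^2$ by $C h^2|\Gamma_K|/|K_i|^2 \cdot \|\nabla\phi_{i,h}\|_{0,K_i}^2$ — but this cannot hold in general, since a nonzero constant function has zero gradient.

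This reveals what I expect to be the main obstacle: the inequality \eqref{eq:l2tr} as stated, with \emph{no} $\phi$-term on the right, can only be true if the constant part of $\phi_{i,h}$ is somehow killed. The resolution I anticipate — and the way I would actually set this up — is that \eqref{eq:l2tr} must be read as applying to functions $\phi_{i,h}$ for which the constant mode has already been removed, or equivalently the inequality is invoked in the proof of coercivity applied to $(\nabla+\I\bk)\phi_h$-type quantities, or the statement tacitly assumes $\phi_{i,h}$ has mean zero on the relevant piece; more likely, reading it against its use in the stability proof, one applies it after subtracting a constant, and the displayed form is the post-subtraction version. Accepting that normalization (i.e. $\overline{\phi_{i,h}}^{K_i}=0$, which is the regime in which the estimate is used), the bound follows from Step one and Step two above with $C_1 = 2$. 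I would therefore state the proof under the understanding that either $\phi_{i,h}$ is normalized to have vanishing mean on $K_i$, or — matching the paper's likely intent — the constant does not survive because the estimate is chained with the Poincaré step; in the write-up I would make the affine computation explicit, note that $\operatorname{diam}(K_i)\le h$ and $|\Gamma_K|\le Ch$, and keep track of the fact that the constants $C_1,C_2$ depend only on the shape-regularity of $\mathcal{T}_h$ (here, since the mesh is uniform, they are absolute) and crucially \emph{not} on how $\Gamma$ cuts $K$ — which is exactly the feature needed to get interface-location-independent error estimates later.
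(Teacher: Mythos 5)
Your treatment of \eqref{eq:h1tr} coincides in substance with the paper's: on an interface element the function is affine, its gradient is a constant vector, and the ratio of the two norms is exactly $|\Gamma_K|/|K_i|$. For \eqref{eq:l2tr} your route is genuinely different from the paper's. The paper does not subtract a mean; it first proves (Lemma~\ref{lem:app}) the pointwise bound $|b_j(\bx)|\le 2h|\nabla b_j|$ for each \emph{nodal basis function} $b_j$ of the triangle $K$ --- which is legitimate because $b_j$ vanishes at the other two vertices and hence carries no free constant mode, so $|\nabla b_j|$ is comparable to $1/h$ --- and then concludes
$\|b_j\|^2_{0,\Gamma_K}\le |\Gamma_K|\,\|b_j\|^2_{0,\infty,K_i}\le 4h^2|\Gamma_K|\,|\nabla b_j|^2 = \tfrac{4h^2|\Gamma_K|}{|K_i|}\|\nabla b_j\|^2_{0,K_i}$,
asserting that the general case follows ``since $\phi_h$ is a linear combination of $b_j$.''

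The obstacle you ran into is therefore not a defect of your approach but a real feature of the statement, and the paper's reduction to basis functions is precisely where it gets buried: for $\phi_{i,h}=\sum_j c_j b_j$ with all $c_j$ equal one has $\sum_j b_j\equiv 1$, so $\nabla\phi_{i,h}=0$ while $\|\phi_{i,h}\|_{0,\Gamma_K}\neq 0$; the per-basis-function inequality does not pass to linear combinations because the gradients can cancel when the traces do not. In other words, \eqref{eq:l2tr} as displayed fails for constants, your proposed mean-zero (or otherwise constant-free) normalization is the minimal hypothesis under which your two-step argument (Poincar\'e on $K_i$ plus transfer to $\Gamma_K$ using $\operatorname{diam}(K)\le h$) gives a correct proof with an absolute constant, and the paper's own proof does not supply the missing mechanism you were looking for. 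Do note, however, that the normalized statement is not the one invoked later: Lemma~\ref{lem:inv} applies \eqref{eq:l2tr} to $q_{i,h}$ itself to control $\|\dgal{(W\bk q)\cdot \bn}\|_{0,\Gamma_K}$, a quantity that is nonzero for constant $q_h$, so if you adopt the normalized version you must also rework that step (e.g.\ by absorbing the constant mode into the $\|(\nabla+\I\bk)q_h\|_{0}$ part of the energy norm) rather than treating the normalization as harmless.
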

%\begin{proof}
% We prove this Lemma in Appendix A.
%\end{proof}

\begin{remark}
 The inequality of \eqref{eq:l2tr} is a refinement of the trace inequality on a cut element  in \cite{HH2002}.  It is the key to show the stability of the bilinear form.
\end{remark}

 Based on the above Lemma, we establish the following error estimates for the weighted averaging.
 \begin{lemma}\label{lem:inv}
Let $ q_h$ be a finite element function in $V_h$. Then the following inequalities hold:
 \begin{align}
&\|\dgal{(W\bk  q)\cdot \bn}\|^2_{0, \Gamma_K}  \le C_3h\|\bk\|^2\lambda_K\|W\|_{\infty}\|\nabla  q_{h}\|^2_{0, K_1\cup K_2}, \label{eq:l2inv}\\
&   \|\dgal{(W\nabla  q)\cdot \bn}\|^2_{0, \Gamma_K}\le  C_4 h^{-1}\lambda_K\|W\|_{\infty} \|\nabla  q_{h}\|^2_{0, K_1\cup K_2} \label{eq:h1inv}.
\end{align}
\vspace{-0.2in}
\end{lemma}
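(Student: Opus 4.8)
The plan is to reduce both estimates to the elementwise trace inequalities of Lemma~\ref{lem:imtr} and then absorb all coefficient-dependent factors into $\lambda_K$ by directly manipulating the definitions \eqref{eq:kapdef} and \eqref{eq:lamdef}. Write the finite element function as $q_h=(q_{1,h},q_{2,h})$ and set $D_K:=\|W_2\|_{\infty}|K_1|+\|W_1\|_{\infty}|K_2|$, so that $\kappa_1|_K=\|W_2\|_{\infty}|K_1|/D_K$, $\kappa_2|_K=\|W_1\|_{\infty}|K_2|/D_K$ and $\lambda_K=h\|W_1\|_{\infty}\|W_2\|_{\infty}|\Gamma_K|/D_K$. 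First I would expand the weighted average by its definition \eqref{eq:wadef}, e.g. $\dgal{(W\nabla q_h)\cdot\bn}=\kappa_1(W_1\nabla q_{1,h})\cdot\bn+\kappa_2(W_2\nabla q_{2,h})\cdot\bn$ and similarly for the $W\bk q_h$ term, use $(a+b)^2\le 2a^2+2b^2$, and bound the matrix-vector products pointwise on $\Gamma_K$ by $\|(W_i\nabla q_{i,h})\cdot\bn\|_{0,\Gamma_K}\le\|W_i\|_{\infty}\|\nabla q_{i,h}\|_{0,\Gamma_K}$ and $\|(W_i\bk q_{i,h})\cdot\bn\|_{0,\Gamma_K}\le\|W_i\|_{\infty}\|\bk\|\,\|q_{i,h}\|_{0,\Gamma_K}$. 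This leaves $\sum_i\kappa_i^2\|W_i\|_{\infty}^2\|\nabla q_{i,h}\|_{0,\Gamma_K}^2$ for \eqref{eq:h1inv}, and $\|\bk\|^2\sum_i\kappa_i^2\|W_i\|_{\infty}^2\|q_{i,h}\|_{0,\Gamma_K}^2$ for \eqref{eq:l2inv}.

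Next I would invoke Lemma~\ref{lem:imtr}: inequality \eqref{eq:h1tr} replaces $\|\nabla q_{i,h}\|_{0,\Gamma_K}^2$ by $C_2(|\Gamma_K|/|K_i|)\|\nabla q_{i,h}\|_{0,K_i}^2$, while \eqref{eq:l2tr} replaces $\|q_{i,h}\|_{0,\Gamma_K}^2$ by $C_1(h^2|\Gamma_K|/|K_i|)\|\nabla q_{i,h}\|_{0,K_i}^2$. In both cases the surviving combinatorial factor is $\kappa_i^2\|W_i\|_{\infty}^2|\Gamma_K|/|K_i|$, and the heart of the argument is the elementary bound
\[
\kappa_i^2\,\|W_i\|_{\infty}^2\,\frac{|\Gamma_K|}{|K_i|}\ \le\ h^{-1}\lambda_K\,\|W\|_{\infty},\qquad i=1,2 .
\]
For $i=1$ the left-hand side equals $\|W_1\|_{\infty}^2\|W_2\|_{\infty}^2|K_1||\Gamma_K|/D_K^2$, and $h^{-1}\lambda_K\|W\|_{\infty}=\|W_1\|_{\infty}\|W_2\|_{\infty}|\Gamma_K|\,\|W\|_{\infty}/D_K$; their ratio is $\|W_1\|_{\infty}\|W_2\|_{\infty}|K_1|/(D_K\|W\|_{\infty})\le 1$ since $D_K\ge\|W_2\|_{\infty}|K_1|$ and $\|W_1\|_{\infty}\le\|W\|_{\infty}$. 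The case $i=2$ is symmetric using $D_K\ge\|W_1\|_{\infty}|K_2|$. Substituting this and summing the two pieces yields \eqref{eq:h1inv} with $C_4=2C_2$ and \eqref{eq:l2inv} with $C_3=2C_1$, where one uses that $\|\nabla q_{1,h}\|_{0,K_1}^2+\|\nabla q_{2,h}\|_{0,K_2}^2=\|\nabla q_h\|_{0,K_1\cup K_2}^2$ by the direct-sum structure of $V_h$.

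There is no real obstacle here; the only care needed is the bookkeeping of the $\kappa_i$, $\lambda_K$ and $\|W_i\|_{\infty}$ factors, in particular choosing the correct one-sided lower bound for $D_K$ in each case so that the coefficient ratio collapses to something $\le 1$. One should also note that the $L^2$ trace bound \eqref{eq:l2tr} is applied in exactly the form in which its right-hand side is $\|\nabla q_{i,h}\|_{0,K_i}$ (not $\|q_{i,h}\|_{0,K_i}$), which is what produces the gradient norm on the right of \eqref{eq:l2inv}; the extra factor $h^2$ there is precisely what turns the $h^{-1}\lambda_K$ scaling of the combinatorial factor into the $h\lambda_K$ scaling claimed in \eqref{eq:l2inv}.
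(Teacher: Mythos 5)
Your proposal is correct and follows essentially the same route as the paper: expand the weighted average via \eqref{eq:wadef}, apply $(a+b)^2\le 2a^2+2b^2$ and the pointwise bounds $\|W_i\|_\infty$, invoke the trace inequalities of Lemma~\ref{lem:imtr}, and then collapse the factor $\kappa_i^2\|W_i\|_\infty^2|\Gamma_K|/|K_i|$ using the definitions \eqref{eq:kapdef} and \eqref{eq:lamdef}. The only cosmetic difference is that the paper uses the exact identity $\kappa_i^2\|W_i\|_{\infty}^2 h^2|\Gamma_K|/|K_i| = h\kappa_i\|W_i\|_{\infty}\lambda_K$ followed by $\kappa_i\le 1$, whereas you bound the same quantity directly by $h\lambda_K\|W\|_\infty$ via the one-sided lower bound on $D_K$ --- the same algebra arranged slightly differently.
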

\begin{proof}
Using  \eqref{eq:kapdef} and \eqref{eq:lamdef}, we can deduce from Lemma \ref{lem:imtr} that
\begin{equation}
\begin{split}
  &\|\dgal{W\bk  q)\cdot \bn}\|^2_{0, \Gamma_K} \\= &\|\kappa_1 (W_1\bk  q_{1,h})\cdot \bn + \kappa_2 (W_2\bk  q_{2,h})\cdot \bn)\|^2_{0, \Gamma_K}\\
  \le & 2\kappa_1^2\|\bk\|^2 \|W_1\|_{\infty}^2\|  q_{1,h} \|^2_{0, \Gamma_K}+2\kappa_2^2\|\bk\|^2 \|W_2\|^2_{\infty} \| q_{2,h}\|^2_{0, \Gamma_K}\\
  \le &C_1\frac{2\kappa_1^2\|\bk\|^2\|W_1\|_{\infty}^2h^2|\Gamma_K|} {|K_1|} \|\nabla q_{1,h}\|^2_{0, K_1}+ \\
  &C_2\frac{2\kappa_2^2\|\bk\|^2\|W_2\|_{\infty}^2h^2|\Gamma_K|}
  {|K_2|} \|\nabla q_{2,h}\|^2_{0, K_2}\\
  =  &2hC_1\kappa_1\|\bk\|^2\|W_1\|_{\infty} \lambda_K \|\nabla q_{1,h}\|^2_{0, K_1}+ \\
  &2hC_2\kappa_2\|\bk\|^2\|W_2\|_{\infty}\lambda_K \|\nabla q_{2,h}\|^2_{0, K_2}\\
  \le & C_3 h\|\bk\|^2 \lambda_K\|W\|_{\infty} \left(  \kappa_1\|\nabla q_{1,h}\|^2_{0, K_1} + \kappa_2 \|\nabla q_{2,h}\|^2_{0, K_2}\right)\\
    \le & C_3h\|\bk\|^2\lambda_K\|W\|_{\infty}\|\nabla  q_{h}\|^2_{0, K_1\cup K_2};
\end{split}
\end{equation}
where we have used the fact $\kappa_i\le 1$ in the last inequality. This completes the proof of inequality \eqref{eq:l2inv} and the inequality \eqref{eq:h1inv} can be established by a similar argument.
\end{proof}

Now, we are ready to show that the bilinear form $a_h(\cdot, \cdot)$  is coercive  and continuous  with respect to the above mesh-dependent norm in the following sense:
\begin{theorem}\label{thm:coer}
Suppose that the stability parameter $\hat{\lambda}$ is large enough. Then there exist two constants $C_5$ and $C_6$ such that
  \begin{align}
&C_5 ||| q_h|||_h^2 \le a_h( q_h, q_h), \quad \forall  q_h \in V_{h, per}; \label{eq:coce}\\
& a_h( q_h, \chi_h) \le C_6 ||| q_h|||_h |||\chi_h|||_h, \quad \forall  q_h, \chi_h \in V_{h, per}.  \label{eq:cont}
\end{align}
\vspace{-0.2in}
\end{theorem}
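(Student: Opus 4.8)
The plan is to establish coercivity \eqref{eq:coce} first, since continuity \eqref{eq:cont} then follows by essentially the same estimates together with Cauchy--Schwarz. Write $a_h(q_h,q_h)$ as the sum of the volume term $T_1 = \sum_i \int_{\Omega_i} W(\nabla+\I\bk)q_h\cdot\overline{(\nabla+\I\bk)q_h}\,d\bx$, the two symmetric consistency terms $T_2 = -2\,\mathrm{Re}\int_\Gamma \dgal{W(\nabla+\I\bk)q_h\cdot n}\llbracket\overline{q_h}\rrbracket\,ds$, and the penalty term $T_3 = \frac1h\int_\Gamma \lambda\,|\llbracket q_h\rrbracket|^2\,ds$. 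Uniform ellipticity of $W$ gives $T_1 \ge c_0\|(\nabla+\I\bk)q_h\|_{0,\Omega_1\cup\Omega_2}^2$, and by definition $T_3 = \hat\lambda\sum_{K\in\mathcal{T}_{\Gamma,h}} h^{-1}\lambda_K\|\llbracket q_h\rrbracket\|_{0,\Gamma_K}^2$, which already controls the jump part of $|||\cdot|||_h$ up to the factor $\hat\lambda\lambda_K$. The point of the argument is that the cross term $T_2$ can be absorbed into a small fraction of $T_1$ plus a small fraction of $T_3$, provided $\hat\lambda$ is chosen large enough.

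The core estimate is the bound on $T_2$. Split $\dgal{W(\nabla+\I\bk)q_h\cdot n} = \dgal{(W\nabla q_h)\cdot n} + \dgal{(W\I\bk q_h)\cdot n}$ and apply Lemma~\ref{lem:inv}: on each interface element $K$, the first piece is bounded in $L^2(\Gamma_K)$ by $C_4^{1/2} h^{-1/2}\lambda_K^{1/2}\|W\|_\infty^{1/2}\|\nabla q_h\|_{0,K_1\cup K_2}$ and the second by $C_3^{1/2} h^{1/2}\|\bk\|\lambda_K^{1/2}\|W\|_\infty^{1/2}\|\nabla q_h\|_{0,K_1\cup K_2}$; since $h$ is bounded the combined flux average on $\Gamma_K$ is $\le C h^{-1/2}\lambda_K^{1/2}\|W\|_\infty^{1/2}(1+\|\bk\|)\|\nabla q_h\|_{0,K_1\cup K_2}$, and $\|\nabla q_h\|_{0,K_1\cup K_2}$ is in turn controlled by $\|(\nabla+\I\bk)q_h\|_{0,K_1\cup K_2} + \|\bk\|\,\|q_h\|_{0,K_1\cup K_2}$ (an elementary triangle inequality; one may further absorb the lower-order $\|q_h\|$ term using a standard inverse/Poincar\'e bound on $\omega_{i,h}$, or simply treat $q_h\in V_{h,per}$ on the whole cell). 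Then, element by element, by Cauchy--Schwarz on $\Gamma_K$ followed by Young's inequality with a parameter $\eta>0$,
\begin{equation*}
2\left|\int_{\Gamma_K}\dgal{W(\nabla+\I\bk)q_h\cdot n}\llbracket\overline{q_h}\rrbracket\,ds\right|
\le \eta\, C\|W\|_\infty(1+\|\bk\|)^2\|(\nabla+\I\bk)q_h\|_{0,K_1\cup K_2}^2 + \eta^{-1} h^{-1}\lambda_K\|\llbracket q_h\rrbracket\|_{0,\Gamma_K}^2.
\end{equation*}
Summing over $K\in\mathcal{T}_{\Gamma,h}$ and combining with the lower bounds on $T_1,T_3$, one gets $a_h(q_h,q_h)\ge (c_0 - \eta C\|W\|_\infty(1+\|\bk\|)^2)\|(\nabla+\I\bk)q_h\|_{0,\Omega_1\cup\Omega_2}^2 + (\hat\lambda - \eta^{-1})\sum_K h^{-1}\lambda_K\|\llbracket q_h\rrbracket\|_{0,\Gamma_K}^2$. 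Here one must be mildly careful that the $\|(\nabla+\I\bk)q_h\|^2$ contributions from the cut elements, which appear with a negative sign from $T_2$, are a subset of the global volume norm, so no double counting hurts. Now fix $\eta$ small so that the first coefficient is $\ge c_0/2$, then take $\hat\lambda > \eta^{-1}$; this yields \eqref{eq:coce} with $C_5 = \min(c_0/2,\, \hat\lambda - \eta^{-1})$ (after noting $\lambda|_K = \hat\lambda\lambda_K$ so the penalty term in $|||\cdot|||_h$ and in $a_h$ match up to the controllable factor $\lambda_K\le\|W\|_\infty$).

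For continuity \eqref{eq:cont}, the volume term is bounded by $\|W\|_\infty\|(\nabla+\I\bk)q_h\|_{0,\Omega_1\cup\Omega_2}\|(\nabla+\I\bk)\chi_h\|_{0,\Omega_1\cup\Omega_2}$ directly; the penalty term by Cauchy--Schwarz in $\ell^2$ over $K$; and each consistency term by the same Lemma~\ref{lem:inv} flux bound on one factor and the $h^{-1/2}\|\llbracket\cdot\rrbracket\|_{0,\Gamma_K}$ piece of $|||\cdot|||_h$ on the other, then Cauchy--Schwarz over $K$. The main obstacle, and the place requiring the most care, is the cross term $T_2$: one needs the \emph{refined} one-sided trace inequality \eqref{eq:l2tr} (rather than the classical two-sided version) so that the geometric factor $h^2|\Gamma_K|/|K_i|$ recombines exactly with the weights $\kappa_i$ and the definition of $\lambda_K$ to produce a bound that is uniform in the interface location — in particular independent of how small $|K_i|$ may be. Keeping track of the $\bk$-dependence (the extra $\|\bk\|q_h$ terms coming from the shifted gradient $\nabla+\I\bk$) and confirming it only enters the constants $C_5, C_6$, not their positivity, is the remaining bookkeeping.
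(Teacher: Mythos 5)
Your proposal is correct and follows essentially the same route as the paper: the same splitting into volume, consistency, and penalty terms, the same use of Lemma~\ref{lem:inv} (hence the refined one-sided trace inequality) to absorb the flux average via Young's inequality, and the same final choice of the Young parameter followed by $\hat\lambda$ large. The only cosmetic difference is that the paper packages your triangle-inequality/Poincar\'e step for converting $\|\nabla q_h\|$ back to $\|(\nabla+\I\bk)q_h\|$ into a single constant $C_I$ with $\|q_h\|_{1,\Omega_1\cup\Omega_2}^2\le C_I\|(\nabla+\I\bk)q_h\|_{0,\Omega_1\cup\Omega_2}^2$ for fixed nonzero $\bk$, and bounds $\|\bk\|\le 4$ explicitly.
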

\begin{proof}
It is noted that \eqref{eq:cont} is a direct consequence of  Lemma \ref{lem:inv}. So we only need to justify the inequality \eqref{eq:coce}. Letting $\phi_h =  q_h$ in \eqref{eq:disbil} and applying the Cauchy-Scharwz inequality and the
 Young's inequality with $\epsilon$, we have
 \begin{equation*}
\begin{split}
  &a_h( q_h ,  q_h) \\
  = &\sum_{i=1}^2\int_{\Omega_i}W(\nabla +\I\bk)  q_h\cdot \overline{(\nabla + \I\bk) q_h}d\bx - \\
  & 2\text{Re}\int_{\Gamma} \dgal{    W(\nabla+\I\bk) q_h\cdot n }\llbracket \overline{ q_h} \rrbracket ds
 +\frac{1}{h}  \| \lambda^{1/2} \llbracket q_h \rrbracket\|^2_{0, \Gamma}\\
 \ge&\|  W^{1/2}(\nabla +\I\bk)  q_h)\|_{0, \Omega_1\cup\Omega_2}^2  - 2 \| \dgal{    W(\nabla+\I\bk) q_h\cdot \bn }\|_{0, \Gamma}
 \|\llbracket  q_h \rrbracket \|_{0, \Gamma} + \frac{1}{h}  \| \lambda^{1/2} \llbracket q_h \rrbracket\|^2_{0, \Gamma}\\
  \ge & C_{u} \|(\nabla +\I\bk)  q_h)\|_{0, \Omega_1\cup\Omega_2}^2  - \sum_{K\in\mathcal{T}_{\Gamma,h}}\frac{h}{\epsilon\lambda_K} \| \dgal{ W(\nabla+\I\bk) q_h\cdot \bn }\|_{0, \Gamma_K}^2 +\\
  &
  \sum_{K\in \mathcal{T}_{\Gamma, h}}\frac{(\hat{\lambda}-\epsilon)\lambda_K}{h}  \| \llbracket  q_h \rrbracket\|^2_{0, \Gamma_K}\\
    \ge & C_{u} \|(\nabla +\I\bk)  q_h)\|_{0, \Omega_1\cup\Omega_2}^2
     -\sum_{K\in\mathcal{T}_{\Gamma,h}} \frac{2h}{\epsilon\lambda_K}\| \dgal{ ( W\nabla q_h)\cdot \bn  }\|_{0, \Gamma_K}^2- \\
     &\sum_{K\in \mathcal{T}_{\Gamma, h}}\frac{2h}{\epsilon\lambda_K}\| \dgal{ (W\bk q_h)\cdot \bn  }\|_{0, \Gamma_K}^2
    +  \sum_{K\in \mathcal{T}_{\Gamma, h}}\frac{(\hat{\lambda}-\epsilon)\lambda_K}{h}  \| \llbracket  q_h \rrbracket\|^2_{0, \Gamma_K}.
    \end{split}
\end{equation*}
    Then, using Lemma \ref{lem:inv}, we can deduce that
 
     \begin{equation*}
\begin{split}
 &a_h( q_h ,  q_h) \\
    \ge &  C_{u} \|(\nabla +\I\bk)  q_h)\|_{0, \Omega_1\cup\Omega_2}^2 +
     \sum_{K\in \mathcal{T}_{\Gamma, h}}\frac{(\hat{\lambda}-\epsilon)\lambda_K}{h}  \| \llbracket  q_h \rrbracket\|^2_{0, \Gamma_K}-\\
   & \sum_{K\in \mathcal{T}_{\Gamma, h}} \frac{2(C_3\|\bk\|^2h^2+C_4)}{\epsilon}\|W\|_{\infty} \|\nabla  q_h\|_{0, K_1\cup K_2}^2   \\
    \ge &  C_{u} \|(\nabla +\I\bk)  q_h)\|_{0, \Omega_1\cup\Omega_2}^2 +
     \sum_{K\in \mathcal{T}_{\Gamma, h}}\frac{(\hat{\lambda}-\epsilon)\lambda_K}{h}  \| \llbracket  q_h \rrbracket\|^2_{0, \Gamma}-\\
   &  \frac{2(16C_3+C_4)}{\epsilon}\|W\|_{\infty} \|  q_h\|_{1, \Omega_1\cup \Omega_2}^2   \\
   \ge & \frac{1}{2} C_{u} \|(\nabla +\I\bk)  q_h)\|_{0, \Omega_1\cup\Omega_2}^2 +
     \sum_{K\in \mathcal{T}_{\Gamma, h}}\frac{(\hat{\lambda}-\epsilon)\lambda_K}{h}  \| \llbracket  q_h \rrbracket\|^2_{0, \Gamma}+\\
   & \left(\frac{1}{2} C_{u} - \frac{2C_I(16C_3+C_4)}{\epsilon}\|W\|_{\infty} \right) \|(\nabla +\I\bk)  q_h)\|_{0, \Omega_1\cup\Omega_2}^2 .
\end{split}
\end{equation*}
Here, $C_I$ is the constant such that $\| q_h\|^2_{1, \Omega_1\cup\Omega_2}\le C_I  \|(\nabla +\I\bk)  q_h)\|_{0, \Omega_1\cup\Omega_2}^2$ for fixed nonzero $\bk$ and we have used the fact $\|\bk\|\le 4$ in the first inequality.
We conclude our proof of \eqref{eq:coce} by taking $\epsilon = \frac{4C_I(16C_3+C_4)}{C_u}\|W\|_{\infty} $ and  choosing  the stability parameter $\hat{\lambda} > \epsilon $.
\end{proof}

Theorem \ref{thm:coer} implies that the finite element eigenvalue  value problem \eqref{eq:fem} is well-posed.
According to the spectral theory,  the discrete eigenvalue of  \eqref{eq:fem} can be enumerated as
\begin{equation}
0 < E^1_h(\bk) \le E^2_h(\bk)  \le \cdots E^{n_h}_h(\bk)
\end{equation}
and the corresponding $L^2$-orthonormal eigenfunctions are $\phi^1_{h}, \phi^2_{h}, \ldots, \phi^{n_h}_h$. Here, $n_h$ is the dimension of the unfitted Nitsche's finite
element space $V_{h, per}$, i.e. $n_h = \dim V_{h, per}$.

The key in the interpolation error estimations of the unfitted Nitsche's methods is to extend a function in the subdomain $\Omega_i$ to the whole domain
$\Omega$.  For any $ q\in H^2(\Omega_i)$,  the  extension operator  of $\phi$ from $H^2(\Omega_i)$ to $H^2(\Omega)$ is denoted by $X_i$
which satisfies
\begin{equation}
(X_i  q)|_{\Omega_i} =  q
\end{equation}
 and
\begin{equation}
\|X_i q\|_{s, \Omega} \le C\| q\|_{s, \Omega_i}, \quad \text{ for } s = 0, 1, 2.
\end{equation}
Let $I_{i,h}$ be the standard nodal interpolation operator  from $C(\overline{\Omega})$ to $V_{i,h}$.
Define the interpolation operator for the finite element space $V_h$ as
\begin{equation}\label{eq:feminterp}
I_h^{\ast} q = (I_{1,h}^{\ast} q_1, I_{2,h}^{\ast} q_2),
\end{equation}
where
\begin{equation}\label{eq:feminterpp}
I_{i,h}^{\ast} q = I_{i,h}X_i q_i, \, i = 1,2.
\end{equation}
For the linear interpolation operator,   \cite{HH2002} established the following optimal error estimates:
\begin{equation}\label{eq:interr}
 \| q  - I_h^{\ast} q \|_{0, \Omega} +  h||| q  - I_h^{\ast} q |||_h \le C h^2 \| q\|_{2, \Omega_1\cup\Omega_2}.
\end{equation}

\subsection{Unfitted Nitsche's method for computing edge modes} In this subsection, we generalize  the unfitted Nitsche's method introduced in previous subsection  to
compute edge modes.  Similarly,  to model the wave propagation  in the heterogeneous media,   we will adopt the jump conditions.   Let $\Gamma_{\Sigma}$
be the union of interfaces in all cells in the  fundamental domain of the cylinder.  Based on this setup,  edge states are the eigenpair of the following
interface eigenvalue problem
\begin{align}
&\mathcal{L}^W\Psi(\bx;\kpar) = E(\kpar)\Psi(\bx;\kpar),   \label{eq:dw_evp_it},\\
&\Psi(\bx+\bv_1;\kpar)=e^{\I\kpar}\Psi(\bx;\kpar),\label{eq:pseudo-per_it}\\
&\Psi(\bx;\kpar) \to 0\ \ {\rm as}\ \ |\bx\cdot\bk_2|\to\infty  \label{eq:localized_it} ,\\
& \left \llbracket \Psi\right \rrbracket  = \left \llbracket W\nabla\Psi\cdot n\right \rrbracket = 0, \quad \text{on } \Gamma_{\Sigma}. \label{eq:edgeinter}
\end{align}
on the infinite domain $\Omega_{\Sigma}$.

 For the interface eigenvalue problem \eqref{eq:dw_evp_it}--\eqref{eq:edgeinter},  the numerical challenges not only  stem from the heterogeneity of the media and the quasi-periodicity of the boundary condition but also stem from  the  infinity nature of the  cylindrical  domain.  For the second difficulty,   thanks to the localization  property of the eigenfunction in the $\bv_2$
  direction,   we can truncate the infinite cylinder  into a finite  computational domain   and replace the localization  condition \eqref{eq:localized_it} by a
  homogeneous Dirichlet boundary condition.    In specific, we define the truncated domain $\Omega_{\Sigma, L}$  as
  \begin{equation}\label{equ:approxdomain}
\Omega_{\Sigma, L} \equiv \left\{ \tau_1\bv_1+\tau_2\bv_2: 0 \le \tau_1 \le 1, -L\le \tau_2\le L\right\}.
\end{equation}

To handle the quasi-periodic boundary condition on $\bv_1$ direction,  we apply the Floquet-Bloch transformation
$\Psi(\bx;   k_{\parallel}) = e^{\I \frac{k_{\parallel}}{2\pi}\mathbf{k}_1\cdot\mathbf{x}}\psi(\mathbf{x};   k_{\parallel})$.
Then, we reformulate  the  problem of  finding edge states as  computing the  eigenpairs of the  interface eigenvalue problem
\begin{align}
&\mathcal{L}^W(k_{\parallel})\psi(\mathbf{x}; k_{\parallel}) = E(k_{\parallel})\psi(\mathbf{x};k_{\parallel}),\label{eq:eigentt} \\
&\psi(\mathbf{x}+\mathbf{v}_1; k_{\parallel}) =\psi(\mathbf{x}; k_{\parallel}), \label{eq:pertt} \\
&\psi(\tau_1\bv_1\pm L\bv_2; k_{\parallel})  = 0, \forall  \, 0 \le \tau_1 \le 1,  \label{eq:inftt}\\
& \left \llbracket \psi\right \rrbracket  = \left \llbracket W(\nabla + \I\bk)\psi\cdot n\right \rrbracket = 0, \quad \text{on } \Gamma_{\Sigma}. \label{eq:intt}
\end{align}
where
\begin{equation}\label{equ:def}
\mathcal{L}^W(k_{\parallel}) = - (\nabla + \I\frac{k_{\parallel}}{2\pi}\mathbf{k}_1)\cdot W(\nabla + \I\frac{k_{\parallel}}{2\pi}\mathbf{k}_1).
\end{equation}

\subsubsection{Unfitted Nitsche's method in a cylinder} To present unfitted Nitsche's method on the truncated  domain $\Omega_{\Sigma, L}$, we introduce the  corresponding Sobolev spaces. Let $W^{k, p}(\Omega_{\Sigma, L})$ denote the Sobolev
spaces of functions defined  on $\Omega_{\Sigma, L}$ with norm $\|\cdot\|_{k, p}$ and
seminorm $|\cdot|_{k, p}$.
To incorporate the  boundary conditions, we define
\begin{equation}
W^{k, p}_{per}(\Omega_{\Sigma,L})\equiv \{ \psi:  \psi \in W^{k, p}(\Omega_{\Sigma, L})  \text{  and   } \psi(\mathbf{x}+\bv_1) = \psi(\mathbf{x})\},
\end{equation}
and
\begin{equation}
W^{k, p}_{per, 0}(\Omega_{\Sigma,L})\equiv \{ \psi:  \psi \in  W^{k, p}_{per}  \text{  and   } \psi(\tau_1\bv_1\pm L\bv_2) = 0 \text{  for  }  0 \le \tau_1\le 1\}.
\end{equation}
When $p=2$, it is simply denoted as $H^k_{per}(\Omega_{\Sigma,L})$ or  $H^k_{per,0}(\Omega_{\Sigma,L})$.

  Note the fact that $\epsilon(\bx)$ is $\Lambda$-periodic.     Then, the computational domain $\Omega_{\Sigma, L}$ can
be split into two disjoint subdomains  $\Omega_{ \Sigma, L}^1$  and $\Omega_{\Sigma, L}^2$, where
\begin{equation}
\Omega_{ \Sigma, L}^i  = \Omega_{\Sigma, L} \cap (\Omega_i+\Lambda),
\end{equation}
for $i = 1, 2$. The restriction of the interface $\Gamma_{\Sigma}$ in $\Omega_{\Sigma, L}$ is denoted by $\Gamma_{\Sigma, L}$,
i.e. $\Gamma_{\Sigma, L} = \Omega_{ \Sigma, L}^1 \cap \Omega_{ \Sigma, L}^2$.   In Figure \ref{fig:gmm}, we give a plot of the interface
$\Gamma_{\Sigma, L}$ with $L = 10$.
\begin{figure}[!h]
   \centering
   \includegraphics[width=\textwidth]{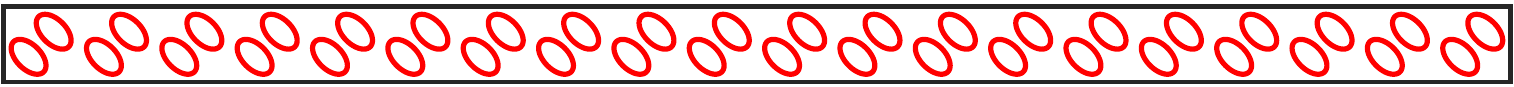}
   \caption{Plot of the interface $\Gamma_{\Sigma, L}$ with $\bv_2$ being the $x$-axis and $\bv_1$ being the $y$-axis.}\label{fig:gmm}
\end{figure}

Let  $\hat{\mathcal{T}}_{ h}$  denote  the uniform triangular partition   of the computational domain  $\Omega_{\Sigma, L}$.
The mesh $\hat{\mathcal{T}}_h$ is generated by  firstly dividing  $\Omega_{\Sigma, L}$ into
$2LN^2$ sub-rhombuses with mesh size $h = \frac{\|\mathbf{v}_1\|}{N}$ and splitting  each sub-rhombus  into two  triangles.
Similarly,  the elements in mesh $\hat{\mathcal{T}}_h$ can be classified  as regular elements or interface elements.
Let $\hat{\mathcal{T}}_{i,h}$ be the set all  elements in $\hat{\mathcal{T}}_h$ covering the subdomain $\Omega_{\Sigma, L}^i$ for $i=1,2$  and $\hat{\mathcal{T}}_{\Gamma,h}$ be the set of interface elements.
The union of all
elements in $\hat{\mathcal{T}}_{i,h}$ is denoted by $\Omega_{\Sigma, L, h}^i$, which is defined as
\begin{equation}
\Omega_{\Sigma, L, h}^i = \bigcup_{K\in \hat{\mathcal{T}}_{i,h}} K, \quad i = 1, 2.
\end{equation}
As demonstrated in the previous section, $\Omega_{\Sigma, L, h}^1$  and $\Omega_{\Sigma, L, h}^2$ form an overlapping decomposition of the computational domain $\Omega_{\Sigma, L}$.

To introduce the finite element space for the unfitted Nitsche's method, we begin with  defining the finite element space on each ficitous subdomain $\Omega^i_{\Sigma, L}$.
Let  $\hat{V}_{i,h}$ be the standard continuous finite element space on $\Omega_{\Sigma, h}^i$ which is defined as
\begin{equation}
\hat{V}_{i,h} = \left\{ v\in C^0(\Omega_{\Sigma, h}^i): v|_{K}\in \mathbb{P}_1(K) \text{ for any } K \in \hat{\mathcal{T}}_{i, h} \right\}, \, i = 1, 2.
\end{equation}
Then, the unfitted Nitsche's finite element space $\hat{V}_h$ is the direct sum of $\hat{V}_{1,h}$ and $\hat{V}_{2,h}$, i.e. $\hat{V}_h  = \hat{V}_{1,h} \oplus \hat{V}_{2,h}$.
To impose the periodic boundary condition in $\bv_1$ direction and homogeneous Dirichlet boundary condition in $\bv_2$ direction,  we introduce the subspace $\hat{V}_{h, 0} = \hat{V}_h \cap  H^k_{per,0}(\Omega_{\Sigma,L})$.

Similar to the previous section, we define  unfitted Nitsche's bilinear form $\hat{a}_h(\cdot, \cdot)$ as
\begin{equation*}
\begin{split}
  \hat{a}_h(u_h ,  v_h) = &\sum_{i=1}^2\int_{\Omega^i_{\Sigma, L}}W(\nabla +\I\frac{k_{\parallel}}{2\pi}\mathbf{k}_1)
  u_h\cdot \overline{(\nabla + \I\frac{k_{\parallel}}{2\pi}\mathbf{k}_1) v_h}d\bx   -\\
  & \int_{\Gamma_{\Sigma, L}}
   \dgal{    W(\nabla+\I\frac{k_{\parallel}}{2\pi}\mathbf{k}_1)u_h\cdot n }\llbracket \overline{ v_h} \rrbracket ds-\\
  &\int_{\Gamma_{\Sigma, L}} \dgal{  \overline{ W(\nabla+\I\frac{k_{\parallel}}{2\pi}\mathbf{k}_1) v_h\cdot n }}\llbracket u_h \rrbracket ds   + \\
 &\frac{1}{h} \int_{\Gamma_{\Sigma, L}} \lambda  \llbracket u_h \rrbracket \llbracket \overline{ v_h} \rrbracket ds,
\end{split}
\end{equation*}
 for any functions $u_h, v_h$ in $\hat{V}_h$.
Then, the unfitted Nitsche's method for the interface  eigenvalue problem  is to find the eigenpair $(\psi_h, E(k_{\parallel}))$ such that
\begin{equation}\label{eq:femnitsche}
 \hat{a}_h(\psi_h,  \eta_h) = E_h(k_{\parallel})\hat{b}(\psi_h,  \eta_h),\quad  \forall  \eta_h \in \hat{V}_{h, 0};
\end{equation}
where
\begin{equation}\label{eq:bhat}
  \hat{b}(\psi_h ,  \eta_h) = \int_{\Omega_{\Gamma}}\psi_h\cdot \overline{ \eta_h}d\bx.
\end{equation}

\subsubsection{Well-posedness of unfitted Nitsche's method in a cylinder}

Using the same argument as in previous subsection, we can prove the unfitted Nitsche's  weak form \eqref{eq:femnitsche}  is consistent in the following sense:
\begin{lemma}
Let  $(\psi, E(k_{\parallel}))$ be the eigenpair of the interface eigenvalue problem \eqref{eq:iteigenLp}--\eqref{eq:incond}.
Then $(\psi, E(k_{\parallel})) \in  H^1_{per,0}(\Omega_{\Sigma,L})\times\mathbb{R}$ also satisfies
\begin{equation}\label{eq:consist_cy}
 \hat{a}_h(\psi,  \eta) = E(k_{\parallel}) \hat{b}(\psi,  \eta),\quad  \forall  \eta \in H^1_{per,0}(\Omega_{\Sigma,L}).
\end{equation}
\vspace{-0.2in}
\end{lemma}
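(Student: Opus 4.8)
The plan is to mirror the consistency argument used in the torus setting: once the interface flux and penalty terms are shown to be inert on genuinely $H^1$ functions, $\hat a_h$ collapses to the classical bilinear form, and the claimed identity follows from Green's formula together with the boundary and interface conditions in \eqref{eq:eigentt}--\eqref{eq:intt}. First I would note that any $\psi\in H^1_{per,0}(\Omega_{\Sigma,L})$ has a single-valued trace across $\Gamma_{\Sigma,L}$, so $\llbracket\psi\rrbracket=\llbracket\eta\rrbracket=0$ on $\Gamma_{\Sigma,L}$ for $\psi,\eta\in H^1_{per,0}(\Omega_{\Sigma,L})$. Hence every term of $\hat a_h(\psi,\eta)$ containing a jump disappears, and
\[
\hat a_h(\psi,\eta)=\sum_{i=1}^2\int_{\Omega^i_{\Sigma,L}}W\Big(\nabla+\I\tfrac{\kpar}{2\pi}\mathbf{k}_1\Big)\psi\cdot\overline{\Big(\nabla+\I\tfrac{\kpar}{2\pi}\mathbf{k}_1\Big)\eta}\,d\bx .
\]

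Next I would integrate by parts on each subdomain $\Omega^i_{\Sigma,L}$ separately, using that $\psi$ is piecewise $H^2$ up to $\Gamma_{\Sigma,L}$ (elliptic interface regularity, guaranteed by the smoothness of $\Gamma_{\Sigma,L}$ and the uniform ellipticity of $W$), so that $W(\nabla+\I\tfrac{\kpar}{2\pi}\mathbf{k}_1)\psi$ admits an $L^2$ conormal trace on $\partial\Omega^i_{\Sigma,L}$. This yields the bulk term $\int_{\Omega^i_{\Sigma,L}}\mathcal{L}^W(\kpar)\psi\cdot\overline{\eta}\,d\bx$ plus a boundary integral over $\partial\Omega^i_{\Sigma,L}$, which splits into three pieces: the Dirichlet caps $\tau_2=\pm L$, where $\eta=0$ kills the contribution; the periodic sides $\tau_1=0$ and $\tau_1=1$, whose contributions cancel because $W$, $\psi$, $\eta$ are $\bv_1$-periodic while the outward normals are opposite; and $\Gamma_{\Sigma,L}$, where the two one-sided contributions combine into $-\int_{\Gamma_{\Sigma,L}}\llbracket W(\nabla+\I\tfrac{\kpar}{2\pi}\mathbf{k}_1)\psi\cdot n\rrbracket\,\overline{\eta}\,ds$, which vanishes by the flux interface condition in \eqref{eq:intt} and the single-valuedness of $\eta$ on $\Gamma_{\Sigma,L}$. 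Summing over $i$ and inserting $\mathcal{L}^W(\kpar)\psi=E(\kpar)\psi$ from \eqref{eq:eigentt} gives $\hat a_h(\psi,\eta)=E(\kpar)\int_{\Omega_{\Sigma,L}}\psi\cdot\overline{\eta}\,d\bx=E(\kpar)\hat b(\psi,\eta)$; that $E(\kpar)\in\mathbb{R}$ is immediate from self-adjointness of $\mathcal{L}^W(\kpar)$ on $H^1_{per,0}(\Omega_{\Sigma,L})$.

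The bulk of the work is the routine bookkeeping of the three boundary pieces; the only point deserving explicit care is the regularity of $\psi$ needed to make Green's formula and the conormal trace on $\Gamma_{\Sigma,L}$ legitimate, exactly as in the torus case. The homogeneous Dirichlet condition \eqref{eq:inftt} on the truncated caps is what makes the argument close cleanly here, replacing the role played by periodicity in the torus setting.
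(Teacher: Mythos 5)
Your proposal is correct and follows essentially the same route as the paper: the paper proves this lemma by simply invoking ``the same argument as in the previous subsection,'' i.e.\ the torus-case consistency proof, which is exactly the argument you spell out (jumps of $H^1$ functions vanish, the form reduces to the standard one, Green's formula on each subdomain plus the interface, periodicity, and Dirichlet conditions). Your version merely fills in the bookkeeping of the boundary pieces that the paper leaves implicit.
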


As a direct consequence of the above Lemma, we have the following  Galerkin  orthogonality:
\begin{corollary}
 Let  $(\psi, E(k_{\parallel}))$ be the eigenpair of the interface eigenvalue problem \eqref{eq:eigentt}--\eqref{eq:intt}
 and $(\psi_h, E_h(k_{\parallel}))$ be the corresponding approximate   eigenpair  by the unfitted Nitsche's method. Then we have
 \begin{equation}\label{eq:gal_cy}
 \hat{a}_h(\psi-\psi_h,  \eta_h)  = 0, \quad \forall  \eta_h \in \hat{V}_{h,0}.
\end{equation}
\vspace{-0.2in}
\end{corollary}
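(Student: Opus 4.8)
The plan is to transport the argument behind the torus Galerkin orthogonality \eqref{eq:gal} to the cylinder. Two ingredients are needed: the variational relation \eqref{eq:femnitsche} defining the discrete pair $(\psi_h, E_h(\kpar))$, and the consistency identity for the exact edge mode $(\psi, E(\kpar))$ tested against the \emph{discrete} space $\hat{V}_{h,0}$. Since \eqref{eq:consist_cy} is stated only for $\eta\in H^1_{per,0}(\Omega_{\Sigma,L})$ and its proof uses $\llbracket\eta\rrbracket=0$, which fails for a generic $\eta_h\in\hat{V}_{h,0}$ (such functions are two-valued on the interface elements), the substantive step is to re-establish $\hat{a}_h(\psi,\eta_h)=E(\kpar)\hat{b}(\psi,\eta_h)$ for all $\eta_h\in\hat{V}_{h,0}$ while keeping $\llbracket\overline{\eta_h}\rrbracket\neq 0$.

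Fix $\eta_h\in\hat{V}_{h,0}$. Because $\llbracket\psi\rrbracket=0$ on $\Gamma_{\Sigma,L}$ by \eqref{eq:intt}, the two terms of $\hat{a}_h(\psi,\eta_h)$ carrying the factor $\llbracket\psi\rrbracket$ drop out, leaving the piecewise volume form over $\Omega^1_{\Sigma,L}\cup\Omega^2_{\Sigma,L}$ together with the interface integral of $-\dgal{W(\nabla+\I\frac{\kpar}{2\pi}\mathbf{k}_1)\psi\cdot n}\,\llbracket\overline{\eta_h}\rrbracket$. I would then apply Green's formula on each fictitious subdomain $\Omega^i_{\Sigma,L}$ separately. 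The interior contributions assemble into $\int_{\Omega_{\Sigma,L}}(\mathcal{L}^W(\kpar)\psi)\,\overline{\eta_h}\,d\bx=E(\kpar)\hat{b}(\psi,\eta_h)$ by \eqref{eq:eigentt}; the boundary contributions on the edges $\tau_1\in\{0,1\}$ cancel pairwise, since $\psi$ and $\eta_h$ are $\bv_1$-periodic and $W$ is $\Lambda$-periodic while the outer normals are opposite, and those on $\tau_2=\pm L$ vanish since $\eta_h\in\hat{V}_{h,0}$ satisfies \eqref{eq:inftt}. The remaining boundary piece lives on $\Gamma_{\Sigma,L}$: by the flux jump condition $\llbracket W(\nabla+\I\frac{\kpar}{2\pi}\mathbf{k}_1)\psi\cdot n\rrbracket=0$ from \eqref{eq:intt}, the two one-sided conormal fluxes coincide, so the weighted average $\dgal{\,\cdot\,}$ collapses to that single-valued trace (recall $\kappa_1+\kappa_2=1$), and the $\Gamma_{\Sigma,L}$ boundary term produced by the integration by parts is then exactly the weighted-flux term that was subtracted. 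The two cancel, yielding
\[
\hat{a}_h(\psi,\eta_h)=E(\kpar)\,\hat{b}(\psi,\eta_h),\qquad\forall\,\eta_h\in\hat{V}_{h,0}.
\]

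Subtracting from this the defining relation $\hat{a}_h(\psi_h,\eta_h)=E_h(\kpar)\hat{b}(\psi_h,\eta_h)$ of \eqref{eq:femnitsche}, and using that $\hat{a}_h$ is linear in its first argument, then gives the orthogonality \eqref{eq:gal_cy} (with the eigenvalue residual $E(\kpar)\hat{b}(\psi,\eta_h)-E_h(\kpar)\hat{b}(\psi_h,\eta_h)$ handled, exactly as for its torus analogue \eqref{eq:gal}, inside the compact-operator approximation analysis of Section~\ref{sec:symbreak}). The only real obstacle I anticipate is the interface bookkeeping in the middle step: one must carry $\llbracket\overline{\eta_h}\rrbracket$ through the elementwise Green's formula and check that the conormal-flux boundary integral over $\Gamma_{\Sigma,L}$ is annihilated by the weighted-average flux term of $\hat{a}_h$ --- this is where both conditions in \eqref{eq:intt} enter, the first to remove the $\llbracket\psi\rrbracket$ terms and the second to render the weighted average single-valued. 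Everything else is routine, and because the truncation enters only through the vanishing of boundary terms on $\tau_2=\pm L$, the argument --- hence \eqref{eq:gal_cy} itself --- is insensitive to $L$ and to the location of $\Gamma_{\Sigma,L}$.
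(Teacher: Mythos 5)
Your route is in substance the one the paper intends: the corollary is presented there with no proof at all beyond ``a direct consequence of the above Lemma,'' i.e., subtract the discrete relation \eqref{eq:femnitsche} from the consistency identity. Where you go beyond the paper, and correctly so, is on the one point that actually needs an argument: the consistency lemma \eqref{eq:consist_cy} is stated only for test functions $\eta\in H^1_{per,0}(\Omega_{\Sigma,L})$, whose proof uses $\llbracket\eta\rrbracket=0$, and this fails for a generic $\eta_h\in\hat V_{h,0}$, which is two-valued on interface elements. Your elementwise Green's formula --- using $\llbracket\psi\rrbracket=0$ to remove the symmetrizing and penalty terms, and the flux continuity in \eqref{eq:intt} to collapse the weighted average $\dgal{\cdot}$ to the single-valued conormal flux that cancels the boundary term produced by integration by parts --- is exactly the standard Nitsche consistency argument against the nonconforming discrete space, and it is what the paper's ``direct consequence'' silently presupposes. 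One caveat, which you flag but defer: subtracting the two variational identities leaves the residual $E(\kpar)\hat b(\psi,\eta_h)-E_h(\kpar)\hat b(\psi_h,\eta_h)$ on the right-hand side, so \eqref{eq:gal_cy} as literally written does not follow for the eigenpair without further argument. That imprecision belongs to the paper's statement rather than to your proof; the orthogonality actually exploited in Section~\ref{sec:symbreak} is the source-problem version $a_h(Tf-T_hf,g_h)=0$, where the two right-hand sides are the same $(f,g_h)$ and your consistency computation closes the argument with no residual at all.
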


We also introduce the following energy norm
\begin{equation}\label{def:energy_norm}
|||\psi|||_h^2 = \| (\nabla + \I\frac{k_{\parallel}}{2\pi}\mathbf{k}_1) \psi\|_{0, \Omega_{ \Sigma, L}^1 \cap \Omega_{ \Sigma, L}^2}^2
+\sum\limits_{K\in \hat{\mathcal{T}}_{\Gamma,h}}h^{-1}\|\llbracket \psi\rrbracket\|_{0,\Gamma_K}^2.
\end{equation}
In term of the energy norm, we shall show that the unfitted Nitsche's bilinear form is coercive and continuous in the following sense
\begin{theorem}\label{thm:cont}
Suppose the stability parameter $\hat{\lambda}$ is large enough. Then there are two constants $C_7$ and $C_8$ such that
  \begin{align}
&C_7 ||| q_h|||_h^2 \le \hat{a}_h( q_h, q_h), \quad \forall  q_h \in \hat{V}_h; \label{eq:coce_cy}\\
& \hat{a}_h( q_h, \chi_h) \le C_8 ||| q_h|||_h |||\chi_h|||_h, \quad \forall  q_h, \chi_h \in \hat{V}_{h,0}.  \label{eq:cont_cy}
\end{align}
\vspace{-0.2in}
\end{theorem}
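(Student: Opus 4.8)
The plan is to mimic the proof of Theorem~\ref{thm:coer} verbatim, since the cylinder setup differs from the torus setup only in (i) the spatial domain ($\Omega_{\Sigma,L}$ in place of $\Omega$, with an extra homogeneous Dirichlet piece of boundary), (ii) the shift vector ($\frac{k_\parallel}{2\pi}\mathbf{k}_1$ in place of $\bk$), and (iii) the interface set ($\Gamma_{\Sigma,L}$ in place of $\Gamma$), none of which affects the local, element-by-element estimates used in the torus case. First I would note that the continuity bound \eqref{eq:cont_cy} follows immediately from the Cauchy--Schwarz inequality together with Lemma~\ref{lem:inv} applied on each interface element $K\in\hat{\mathcal{T}}_{\Gamma,h}$ and summed; the only substitution is $\bk\mapsto\frac{k_\parallel}{2\pi}\mathbf{k}_1$, and since $k_\parallel\in[0,2\pi]$ we have $\|\frac{k_\parallel}{2\pi}\mathbf{k}_1\|\le\|\mathbf{k}_1\|$, a fixed constant, so all the $\bk$-dependent constants stay bounded.

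For the coercivity \eqref{eq:coce_cy}, I would set $u_h=v_h=q_h$ in $\hat{a}_h$, keep the real diffusion term, bound the two interface consistency terms by $2\,\mathrm{Re}\int_{\Gamma_{\Sigma,L}}\dgal{W(\nabla+\I\frac{k_\parallel}{2\pi}\mathbf{k}_1)q_h\cdot n}\llbracket\overline{q_h}\rrbracket\,ds$, and then apply Young's inequality with a parameter $\epsilon$ elementwise, splitting $\dgal{W(\nabla+\I\frac{k_\parallel}{2\pi}\mathbf{k}_1)q_h\cdot n}$ into its gradient part and its lower-order part exactly as in the torus proof. Lemma~\ref{lem:inv} then controls both parts by $\lambda_K\|W\|_\infty h^{\pm1}\|\nabla q_h\|_{0,K_1\cup K_2}^2$; summing over $K$ and absorbing the $\|\nabla q_h\|_{0,\Omega^1_{\Sigma,L}\cup\Omega^2_{\Sigma,L}}^2$ terms against the diffusion term (using uniform ellipticity of $W$ and the norm equivalence $\|q_h\|_{1}^2\le C_I\|(\nabla+\I\frac{k_\parallel}{2\pi}\mathbf{k}_1)q_h\|_0^2$) yields \eqref{eq:coce_cy}, provided $\hat{\lambda}$ is chosen larger than the resulting threshold $\epsilon=\frac{4C_I(16C_3+C_4)}{C_u}\|W\|_\infty$ (with $16$ replaced by whatever bound $\|\frac{k_\parallel}{2\pi}\mathbf{k}_1\|^2 h^2\le C$ gives).

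The one genuine subtlety, and the step I expect to need the most care, is the norm-equivalence constant $C_I$ with $\|q_h\|_{1,\Omega^1_{\Sigma,L}\cup\Omega^2_{\Sigma,L}}^2\le C_I\|(\nabla+\I\frac{k_\parallel}{2\pi}\mathbf{k}_1)q_h\|_{0,\Omega^1_{\Sigma,L}\cup\Omega^2_{\Sigma,L}}^2$: in the torus case a Poincar\'e-type inequality for nonzero quasimomentum is used, but here the analogue must come from the homogeneous Dirichlet boundary condition on $\tau_2=\pm L$, which for functions in $\hat{V}_{h,0}$ (equivalently $H^1_{per,0}(\Omega_{\Sigma,L})$) gives a Poincar\'e inequality with constant depending on $L$ but independent of $h$ and of the interface location. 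I would therefore state coercivity for $q_h\in\hat{V}_{h,0}$ rather than all of $\hat{V}_h$ (the stated theorem writes $\hat{V}_h$, but the Dirichlet constraint is what makes the constant finite; alternatively one keeps the full $\hat{V}_h$ by noting the shift $\frac{k_\parallel}{2\pi}\mathbf{k}_1$ is nonzero whenever $k_\parallel\neq0$, giving the same Poincar\'e-type bound as in the torus). With $C_I$ in hand, the rest is the identical absorption argument, and the proof concludes by the same choice of $\epsilon$ and $\hat{\lambda}$ as in Theorem~\ref{thm:coer}.
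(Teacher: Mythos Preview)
Your proposal is correct and matches the paper's approach: the paper does not give a separate proof of Theorem~\ref{thm:cont} at all, implicitly treating it as a verbatim repetition of the argument for Theorem~\ref{thm:coer} with the obvious substitutions you list. Your remark about the constant $C_I$ (that in the cylinder one should invoke the homogeneous Dirichlet condition on $\tau_2=\pm L$, hence really work in $\hat{V}_{h,0}$, or else use $k_\parallel\neq0$) is a genuine point the paper glosses over; otherwise the argument is identical.
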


Theorem \ref{thm:cont} also means the discrete eigenvalue  value problem \eqref{eq:fem} is a well-posed problem.
According to the spectral theory,  the discrete eigenvalue of  \eqref{eq:fem} can be enumerated as
\begin{equation}
0 < E^1_h(k_{\parallel}) \le E^2_h(k_{\parallel})  \le \cdots E^{\hat{n}_h}_h(k_{\parallel})
\end{equation}
and the corresponding $L^2$-orthonormal eigenfunctions are $\psi^1_{h}, \psi^2_{h}, \ldots, \psi^{\hat{n}_h}_h$. Here, $\hat{n}_h$ is the dimension of the unfitted Nitsche's finite
element space $\hat{V}_{h, 0}$.

Likewise, we  use $\hat{X}_i$ to denote  the extension operator for functions defined $\Omega_{\Sigma,L}^i$ to $\Omega_{\Sigma,L}$
which satisfies
\begin{equation}
(\hat{X}_i  \eta)|_{\Omega_i} =  \eta
\end{equation}
 and
\begin{equation}
\|\hat{X}_i \eta\|_{s, \Omega} \le C\| q\|_{s, \Omega_i}, \quad \text{ for } s = 0, 1, 2.
\end{equation}
Let $\hat{I}_{i,h}$ be the standard nodal interpolation operator  from $C(\overline{\Omega_{\Sigma, L}})$ to $\hat{V}_{i,h}$.
Define the interpolation operator for the finite element space $\hat{V}_h$ as
\begin{equation}\label{eq:int_em}
\hat{I}_h^{\ast} q = (\hat{I}_{1,h}^{\ast} q_1, \hat{I}_{2,h}^{\ast} q_2),
\end{equation}
where
\begin{equation}\label{eq:femint_em}
\hat{I}_{i,h}^{\ast} q = \hat{I}_{i,h}\hat{X}_i q_i, \, i = 1,2.
\end{equation}
We can also  show  the following  interpolation  error estimates:
\begin{equation}\label{eq:interr_em}
 \| \eta  - \hat{I}_h^{\ast} \eta \|_{0, \Omega_{\Sigma,L}} +  h||| \eta - \hat{I}_h^{\ast} \eta |||_h \le C h^2 \| \eta\|_{2, \Omega^1_{\Sigma,L}\cup \Omega^2_{\Sigma,L}}.
\end{equation}

\section{Error analysis}\label{sec:symbreak} In this section, we present unified error estimation for the proposed unfitted Nitsche's methods.
Our main analysis tool is the Babuska-Osborn spectral approximation theory \cite{BO1991}.

When we consider the eigenvalue problem \eqref{eq:iteigenLp}--\eqref{eq:incond},  let $A_h(\cdot, \cdot)$  denote the  Nitsche's bilinear function $a_h(\cdot, \cdot)$
which is defined on $V_a: = H^1_{per}(\Omega)$ and $B_h(\cdot, \cdot)$  corresponding  the $L^2$ inner production $b_h(\cdot, \cdot)$
on $V_b: = L^2_{per}(\Omega)$.   Similarly, when we consider the eigenvalue problem \eqref{eq:eigentt}--\eqref{eq:intt},   let $A_h(\cdot, \cdot)$  denote denote the  Nitsche's bilinear function $\hat{a}_h(\cdot, \cdot)$
which is defined on $V_a: = H^1_{per,0}(\Omega_{\Sigma,L})$ and $B_h(\cdot, \cdot)$  corresponding  the $L^2$ inner production $\hat{b}_h(\cdot, \cdot)$
on $V_b: =L^2_{per}(\Omega_{\Sigma,L})$.    The corresponding $L^2$ norm is denoted by $\|\cdot\|_{b}$.  The Nitsche's finite element function  is  denote by $S_h$ which is either $V_{h, per}$ or $\hat{V}_{h,0}$.

For any $f \in V_b$,  let $T: V_b \rightarrow V_a$ be the solution operator for the source problem
such that
\begin{equation}\label{eq:source}
A_h(Tf,  g)  = (f,  g), \quad \forall  g \in V_a.
\end{equation}
We rewrite the  interface eigenvalue problem \eqref{eq:iteigenLp}--\eqref{eq:incond} (or   \eqref{eq:eigentt}--\eqref{eq:intt})  as
\begin{equation}
 T\phi = \mu \phi
\end{equation}
where $\mu = E(\bk)^{-1}$ (or   $\mu = E(k_{\parallel})^{-1}$).   For the source problem \eqref{eq:source}, we can show the following regularity \cite{Ba1970, Ke1975}
\begin{equation}\label{eq:reg}
\|Tf\|_{2, \star} \le C\|f\|_b,
\end{equation}
where the notation  $\|\cdot\|_{2, \star}$ denotes the piecewise $H^2$ norm  $\|\cdot\|_{2, \Omega_1\cup\Omega_2}$ or $\|\cdot\|_{2, \Omega_{\Sigma,L}^1\cup\Omega_{\Sigma,L}^2}$.

Similarly, we introduce the solution operator  $T_h$ for the discrete  source problems which is defined as
\begin{equation}\label{eq:dissource}
a_h(T_{h}f,  g_h)  = (f,  g_h), \quad \forall  g_h \in S_h.
\end{equation}
The unfitted Nitsche's method  \eqref{eq:fem} has the following equivalent representation
\begin{equation}
 T_h\phi_h = \mu_h \phi_h,
\end{equation}
where  $\mu_h = E_h(\bk)^{-1}$   (or   $\mu_h = E_h(k_{\parallel})^{-1}$).
Evidently, both $T$ and $T_h$ are self-adjoint, elliptic, and compact linear operators.

From the interpolation error estimate, we can show  the following error estimates for unfitted Nitsche's method  approximating the
source problem:
\begin{theorem}\label{thm:app}
Let $T$ and $T_h$ be the solution operators defined in \eqref{eq:source} and \eqref{eq:dissource}, respectively. Then we have the following error estimates, for any $f\in L^2(\Omega)$
(or $L^2(\Omega_{\Sigma, L})$),
\begin{align}
& |||Tf-T_hf|||_h \le Ch\|f\|_b, \label{eq:femh1}\\
 & \|Tf - T_hf\|_b  \le Ch^2\|f\|_b. \label{eq:feml2}
\end{align}
\vspace{-0.17in}
\end{theorem}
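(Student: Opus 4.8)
The plan is to follow the standard Nitsche-type a priori error analysis, combining the coercivity/continuity of $a_h$ (Theorem \ref{thm:coer}, resp. Theorem \ref{thm:cont}), Galerkin orthogonality (the corollary following each consistency lemma), the interpolation estimate \eqref{eq:interr} (resp. \eqref{eq:interr_em}), and the elliptic regularity \eqref{eq:reg}, then close the $L^2$ estimate with an Aubin--Nitsche duality argument. I would prove the two tori/cylinder cases simultaneously using the unified notation $A_h$, $B_h$, $V_a$, $V_b$, $S_h$, $\|\cdot\|_{2,\star}$ introduced at the start of Section \ref{sec:symbreak}, since the arguments are identical; the only inputs used are coercivity, continuity, consistency (Galerkin orthogonality), approximability of $V_a\cap H^2(\Omega_1\cup\Omega_2)$ by $S_h$ in the $|||\cdot|||_h$ norm, and the regularity shift.

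\medskip
\noindent\textbf{Step 1 (energy estimate \eqref{eq:femh1}).} Fix $f\in V_b$ and set $e := Tf - T_hf$. By the triangle inequality, for the interpolant $I_h^\ast(Tf)$ (well defined since $Tf\in H^2(\Omega_1\cup\Omega_2)$ by \eqref{eq:reg}),
\[
|||e|||_h \le |||Tf - I_h^\ast(Tf)|||_h + |||I_h^\ast(Tf) - T_hf|||_h .
\]
The first term is $\le Ch\|Tf\|_{2,\star}\le Ch\|f\|_b$ by \eqref{eq:interr} and \eqref{eq:reg}. For the second, write $\xi_h := I_h^\ast(Tf) - T_hf \in S_h$; coercivity (\eqref{eq:coce} or \eqref{eq:coce_cy}) gives
\[
C_5 |||\xi_h|||_h^2 \le A_h(\xi_h,\xi_h) = A_h\bigl(I_h^\ast(Tf) - Tf,\ \xi_h\bigr) + A_h\bigl(Tf - T_hf,\ \xi_h\bigr).
\]
By Galerkin orthogonality (\eqref{eq:gal} or \eqref{eq:gal_cy}, which applies since $\xi_h\in S_h$) the last term vanishes, and continuity (\eqref{eq:cont} or \eqref{eq:cont_cy}) bounds the first by $C_6|||Tf - I_h^\ast(Tf)|||_h\,|||\xi_h|||_h$; dividing by $|||\xi_h|||_h$ yields $|||\xi_h|||_h\le (C_6/C_5)|||Tf - I_h^\ast(Tf)|||_h \le Ch\|f\|_b$. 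Summing the two bounds gives \eqref{eq:femh1}. (A minor technical point I would address: Galerkin orthogonality as stated is for the \emph{exact} eigenpair, but the same argument — consistency of $A_h$ on $V_a$ plus the defining relations \eqref{eq:source}, \eqref{eq:dissource} — gives $A_h(Tf - T_hf, g_h) = (f,g_h) - (f,g_h) = 0$ for all $g_h\in S_h$ directly, which is what is actually used here.)

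\medskip
\noindent\textbf{Step 2 ($L^2$ estimate \eqref{eq:feml2}, Aubin--Nitsche duality).} Let $e = Tf - T_hf\in V_b$ and consider the dual problem: let $w := Te\in V_a$, i.e. $A_h(v, w) = (v, e)$ for all $v\in V_a$ (using self-adjointness of $A_h$, equivalently of $T$). Taking $v = e$ gives $\|e\|_b^2 = (e,e) = A_h(e, w)$. Now insert $I_h^\ast w\in S_h$ and use Galerkin orthogonality $A_h(e, I_h^\ast w) = 0$:
\[
\|e\|_b^2 = A_h(e,\ w - I_h^\ast w) \le C_6\,|||e|||_h\,|||w - I_h^\ast w|||_h .
\]
By the interpolation estimate and the regularity shift $\|w\|_{2,\star} = \|Te\|_{2,\star}\le C\|e\|_b$, we get $|||w - I_h^\ast w|||_h \le Ch\|w\|_{2,\star}\le Ch\|e\|_b$. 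Combined with $|||e|||_h\le Ch\|f\|_b$ from Step 1, this yields $\|e\|_b^2 \le Ch^2\|f\|_b\,\|e\|_b$, hence $\|e\|_b\le Ch^2\|f\|_b$, which is \eqref{eq:feml2}.

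\medskip
\noindent\textbf{Main obstacle.} The analytic heavy lifting has already been done: coercivity/continuity (Theorem \ref{thm:coer}/\ref{thm:cont}), the refined cut-element trace inequality (Lemma \ref{lem:imtr}), and the interpolation bound \eqref{eq:interr} taking care of the nonconforming, unfitted structure. So the only real subtlety in this theorem is making the duality argument rigorous in the nonconforming setting — specifically, verifying that $A_h(\cdot,\cdot)$ is genuinely symmetric on $V_a$ (so that the adjoint solution operator coincides with $T$ and the dual problem is the same problem), and that consistency holds for the dual right-hand side $e\in V_b$ so that $A_h(v,Te)=(v,e)$ for all $v\in V_a\supset S_h$. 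Once symmetry and consistency are in hand, Step 2 is routine. A secondary point worth a remark is that all constants are independent of $h$ and of the interface location because every ingredient invoked — \eqref{eq:interr}, Theorem \ref{thm:coer}, \eqref{eq:reg} — enjoys that property; I would note this explicitly so that the eigenvalue/eigenfunction convergence rates derived from Theorem \ref{thm:app} via \cite{BO1991} inherit interface-location independence.
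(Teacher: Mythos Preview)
Your proposal is correct and follows exactly the route the paper takes: the paper's own proof is a two-line sketch invoking coercivity/continuity (Theorem~\ref{thm:coer} or~\ref{thm:cont}), the interpolation estimate \eqref{eq:interr}/\eqref{eq:interr_em}, and the regularity \eqref{eq:reg} for \eqref{eq:femh1}, then citing the Aubin--Nitsche trick (referencing \cite{HH2002}) for \eqref{eq:feml2}. Your expanded version is faithful to that outline, and your ``main obstacle'' paragraph correctly flags the only loose end (continuity/consistency on $V_a+S_h$ rather than just $S_h$), which the paper leaves implicit as well.
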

\begin{proof}
The inequality \eqref{eq:femh1} follows directly  from   Theorem \ref{thm:coer} (or Theorem \ref{thm:cont}), the interpolation error estimate \eqref{eq:interr} (or \eqref{eq:interr_em}),  and the regularity \eqref{eq:reg}.   The inequality \eqref{eq:feml2}
can be proved via the Aubin-Nitsche's tricks, see for example, \cite{HH2002}.
\end{proof}

From the above theorem, we can deduce the following  corollary:

\begin{corollary}
 Let $T$ and $T_h$ be the solution operator defined in \eqref{eq:source} and \eqref{eq:dissource}, respectively.  We have
 \begin{equation}
||T-T_h||_{\mathcal{L}(V_b)}  \le Ch^2.
\end{equation}

 and thus
\begin{equation}
\lim_{h\rightarrow 0} ||T-T_h||_{\mathcal{L}(V_b)}  = 0.
\end{equation}

\end{corollary}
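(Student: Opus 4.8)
The plan is to read the corollary off directly from the $L^2$-estimate \eqref{eq:feml2} established in Theorem~\ref{thm:app}. First I would recall that the operator norm on $\mathcal{L}(V_b)$ is, by definition,
\begin{equation*}
\|T-T_h\|_{\mathcal{L}(V_b)} = \sup_{0\neq f\in V_b} \frac{\|(T-T_h)f\|_b}{\|f\|_b},
\end{equation*}
and note that both $Tf$ and $T_hf$ lie in $V_a\subset V_b$ (indeed in $H^1$), so $(T-T_h)f$ is a well-defined element of $V_b$ whose $\|\cdot\|_b$-norm can be estimated.

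Next, for an arbitrary nonzero $f\in V_b$ I would invoke \eqref{eq:feml2}, which gives $\|Tf-T_hf\|_b\le Ch^2\|f\|_b$ with $C$ independent of $h$ and of $f$, in accordance with the constant convention fixed at the beginning of Section~\ref{sec:disp}. Dividing by $\|f\|_b$ and taking the supremum over all such $f$ yields $\|T-T_h\|_{\mathcal{L}(V_b)}\le Ch^2$, the first assertion. Sending $h\to 0$ in $Ch^2$ then immediately gives $\lim_{h\to 0}\|T-T_h\|_{\mathcal{L}(V_b)}=0$, the second assertion.

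The single point that needs care — and the only place the argument could fail — is the uniformity in $f$ of the constant in \eqref{eq:feml2}: the estimate must hold with one $C$ valid for every $f\in V_b$, not an $f$-dependent one, for the supremum step to be legitimate. This is exactly what Theorem~\ref{thm:app} delivers, since its proof combines Theorem~\ref{thm:coer} (or Theorem~\ref{thm:cont}), the interpolation estimate \eqref{eq:interr} (or \eqref{eq:interr_em}), the piecewise elliptic regularity \eqref{eq:reg}, and an Aubin–Nitsche duality argument, each carrying a constant that depends only on the mesh shape regularity, the coefficient $W$, and the (fixed) wavenumber, but not on $f$ or $h$. Consequently there is no genuine obstacle: the corollary is a formal consequence of Theorem~\ref{thm:app}, and I would present it as such in a couple of lines.
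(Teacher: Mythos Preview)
Your proposal is correct and matches the paper's approach: the paper presents this corollary as an immediate consequence of Theorem~\ref{thm:app} without further proof, and your argument of taking the supremum in the uniform $L^2$-estimate \eqref{eq:feml2} is precisely the one-line justification the paper has in mind.
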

Let $\rho(T)$  (or $\rho(T_h)$) denote the resolvent set of operator $T$ (or $T_h$), and $\sigma(T)$  (or $\sigma(T_h)$ denote the spectrum set of operator $T$ (or $T_h$).
Using the  above approximation property, we have the following   property of  no pollution of the spectrum which is a direct application
of Theorem 9.1 in \cite{Bo2010}: 
\begin{theorem}\label{thm:nopol}
For any compact set $K \subset \rho(T)$, there is $h_0>0$ such that $K\subset \rho(T_h)$ holds for all $h < h_0$.
If $E$ is a nonzero eigenvalue of $T$ with algebraic multiplicity $m$, there are $m$ eigenvalues
$E_h^{1},E _h^{1}, \cdots, E_h^{m}$ of $T_h$ such that all eigenvalues $E_h^{j}, j=1,...,m$ converge to $E$ as $h$ tends to 0.
\end{theorem}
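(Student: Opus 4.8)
The plan is to obtain the statement as a direct application of the Babuska--Osborn spectral approximation theory \cite{BO1991} (equivalently Theorem~9.1 of \cite{Bo2010}): since the corollary preceding this theorem gives operator-norm convergence $\|T-T_h\|_{\mathcal{L}(V_b)}\le Ch^2\to 0$ and both $T$ and $T_h$ are compact and self-adjoint on $V_b$, all hypotheses of that theory are met, and the argument reduces to recalling its mechanism through resolvents and Riesz spectral projections.

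First I would establish the no-pollution claim by a Neumann-series estimate. Fix a compact set $K\subset\rho(T)$. The map $z\mapsto(zI-T)^{-1}$ is analytic, hence continuous, on $\rho(T)$, so $M:=\sup_{z\in K}\|(zI-T)^{-1}\|_{\mathcal{L}(V_b)}<\infty$. For $z\in K$ write
\begin{equation*}
zI-T_h=(zI-T)\bigl(I-(zI-T)^{-1}(T_h-T)\bigr).
\end{equation*}
As soon as $\|T_h-T\|_{\mathcal{L}(V_b)}<\tfrac12 M^{-1}$ --- which, by the corollary, holds for all $h<h_0$ with $h_0$ depending only on $K$ and $C$ --- the second factor is invertible by the Neumann series, so $zI-T_h$ is boundedly invertible with $\|(zI-T_h)^{-1}\|_{\mathcal{L}(V_b)}\le 2M$. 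Thus $K\subset\rho(T_h)$ for all $h<h_0$, which is the first assertion, and it also supplies a uniform resolvent bound needed below.

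For the eigenvalue convergence, let $E\neq 0$ be an eigenvalue of $T$ of algebraic multiplicity $m$. Since $T$ is compact, $E$ is isolated in $\sigma(T)$; choose a circle $\gamma\subset\C$ enclosing $E$ and no other spectral point. Then $\gamma$ is a compact subset of $\rho(T)$, so by the previous step $\gamma\subset\rho(T_h)$ for $h$ small, and the Riesz projections
\begin{equation*}
P=\frac{1}{2\pi\I}\oint_\gamma(zI-T)^{-1}\,dz,\qquad
P_h=\frac{1}{2\pi\I}\oint_\gamma(zI-T_h)^{-1}\,dz
\end{equation*}
are well defined, with $\mathrm{rank}\,P=m$. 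From the resolvent identity $(zI-T_h)^{-1}-(zI-T)^{-1}=(zI-T_h)^{-1}(T_h-T)(zI-T)^{-1}$ and the uniform bounds on $\gamma$ one gets $\|P-P_h\|_{\mathcal{L}(V_b)}\le C'\|T_h-T\|_{\mathcal{L}(V_b)}\le C'h^2\to 0$. Once this is below $1$, the standard fact that two bounded idempotents at distance less than $1$ have equal rank yields $\mathrm{rank}\,P_h=m$, so $T_h$ has exactly $m$ eigenvalues $E_h^1,\dots,E_h^m$ (counted with algebraic multiplicity) inside $\gamma$, and self-adjointness of $T_h$ makes them real. Letting the radius of $\gamma$ shrink to $0$ while sending $h\to 0$ accordingly shows $E_h^j\to E$; a quantitative rate follows from the finer estimate in \cite{BO1991,Bo2010} involving $\|(T-T_h)|_{\mathrm{ran}\,P}\|$ and the spectral gap, but only the qualitative convergence is claimed here.

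I do not anticipate a genuine obstacle: the real work was already done in proving $\|T-T_h\|_{\mathcal{L}(V_b)}\le Ch^2$, which rested on Theorem~\ref{thm:coer}/Theorem~\ref{thm:cont}, the interpolation estimate \eqref{eq:interr}/\eqref{eq:interr_em}, and the Aubin--Nitsche duality in Theorem~\ref{thm:app}. The only points requiring care are to make $h_0$ in the no-pollution statement depend solely on the fixed compact set and on $C$ (and not on $h$), and to keep the Riesz-projection rank argument valid even though $T$ need not be positive; both are handled by the uniform resolvent bound on $\gamma$ obtained in the second step.
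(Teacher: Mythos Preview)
Your proposal is correct and takes essentially the same approach as the paper: the paper simply states that the theorem is ``a direct application of Theorem~9.1 in \cite{Bo2010}'' using the operator-norm convergence $\|T-T_h\|_{\mathcal{L}(V_b)}\to 0$ established in the preceding corollary, and your argument just unpacks the standard Neumann-series and Riesz-projection mechanism behind that citation.
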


%\emph{Suppose that $E \in \sigma(T)$ is a non-zero eigenvalue with algebraic multiplicity $m$.  Theorem  \ref{thm:nopol} implies that  there are exactly
%$m$ discrete eigenvalue of $T_h$ converging to $E$ as  $h$ tends to zero. \textbf{The italic sentence should be removed since it is a repeat of the theorem. YZ} }

 For any closed smooth curve $\mathcal{C} \subset \rho(T)$  enclosing  $E \in \sigma (T)$
and no other element of $\sigma(T)$,  the Reisz spectral projection associated with $E$ is defined as  \cite{BO1991}
 \begin{equation}
P = \frac{1}{2\pi\I} \int_{\mathcal{C} } (z-T)^{-1}dz.
\end{equation}
When $h$ is sufficiently small,  $\mathcal{C}  \subset \rho(T_h)$ encloses exactly $m$ discrete eigenvalues of $T_h$.   We define analogously the discrete spectral  projection
 \begin{equation}
P_h = \frac{1}{2\pi\I} \int_{\mathcal{C} } (z-T_h)^{-1}dz.
\end{equation}

Thanks to the above preparations, we are ready to show our main eigenpair  approximation results.

\begin{theorem} \label{thm:eigapp}
Let $\mu_h$ be an eigenvalue of $T_h$ such that $\lim_{h\rightarrow 0} \mu_h = \mu$.  Let $g_h$ be a unit eigenvector of $T_h$
 corresponding to the eigenvalue $\mu_h$.  Then there exists a unit eigenvector $g\in R(P)$ such that \textbf{the following estimates hold}
\begin{align}
 &\|g-g_h\|_{0, \Omega} \le  Ch^2 \|g\|_{2, \star},  \label{eq:efunerr}\\
 &  |\mu - \mu_h| \le Ch^2 \|g\|_{2, \star}, \label{eq:evalerr}\\
 & |E - E_h| \le Ch^2 \|g\|_{2, \star}. \label{eq:evalueerr}
\end{align}

\end{theorem}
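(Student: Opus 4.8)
The plan is to deduce Theorem~\ref{thm:eigapp} from the Babuska--Osborn spectral approximation theory \cite{BO1991} (see also \cite{Bo2010}), feeding it the operator-level estimates already established. Recall that $T$ and $T_h$ are self-adjoint, elliptic, compact operators on $V_b$, that $\mu=E^{-1}$, $\mu_h=E_h^{-1}$ are the corresponding reciprocal eigenvalues, and that Theorem~\ref{thm:app} together with its corollary gives $\|T-T_h\|_{\mathcal L(V_b)}\le Ch^2$ and, more precisely, $\|(T-T_h)v\|_b\le Ch^2\|v\|_b$ for every $v\in V_b$. Fix a closed smooth curve $\mathcal C\subset\rho(T)$ enclosing $\mu$ and no other point of $\sigma(T)$; by Theorem~\ref{thm:nopol} we may take $h$ small enough that $\mathcal C\subset\rho(T_h)$, that $\mathcal C$ encloses exactly $m$ eigenvalues of $T_h$ counted with multiplicity (among them $\mu_h$), and that the resolvents $(z-T)^{-1}$, $(z-T_h)^{-1}$ are bounded uniformly for $z\in\mathcal C$ and for such $h$. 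The first step is then to bound the spectral projections: integrating the identity $(z-T)^{-1}-(z-T_h)^{-1}=(z-T)^{-1}(T-T_h)(z-T_h)^{-1}$ over $\mathcal C$ yields $P-P_h=\frac1{2\pi\I}\int_{\mathcal C}(z-T)^{-1}(T-T_h)(z-T_h)^{-1}\,dz$, whence $\|P-P_h\|_{\mathcal L(V_b)}\le C\|T-T_h\|_{\mathcal L(V_b)}\le Ch^2$.

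Next I would prove the eigenfunction estimate \eqref{eq:efunerr}. Set $g:=Pg_h/\|Pg_h\|_b\in R(P)$, which, once $h$ is small enough that $Pg_h\neq0$, is a unit eigenvector of $T$ (equivalently of the continuous interface problem) for $\mu$. Since $g_h=P_hg_h$,
\[
\|g_h-g\|_b\le\|g_h-Pg_h\|_b+\bigl|\,\|Pg_h\|_b-1\,\bigr|\le 2\|(P_h-P)g_h\|_b\le 2\|P-P_h\|_{\mathcal L(V_b)}\le Ch^2 .
\]
Because $g\in R(P)$ satisfies $g=E\,Tg$, the piecewise-$H^2$ elliptic regularity \eqref{eq:reg} (applied with source $g$) gives $\|g\|_{2,\star}=E\|Tg\|_{2,\star}\le CE\|g\|_b$, so $\|g\|_{2,\star}$ is comparable to $\|g\|_b=1$; in particular the bound above is a fortiori of the form \eqref{eq:efunerr}, the factor $\|g\|_{2,\star}$ recording that the constant can be traced through the regularity estimate.

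For the eigenvalue estimate \eqref{eq:evalerr} I would use the self-adjoint form of the Babuska--Osborn eigenvalue bound: since $T$ and $T_h$ are self-adjoint with respect to the $B$-inner product, no separate adjoint analysis is needed and
\[
|\mu-\mu_h|\le C\sup_{\substack{v\in R(P)\\ \|v\|_b=1}}\|(T-T_h)v\|_b\le Ch^2 ,
\]
the last inequality by \eqref{eq:feml2}; evaluating with $v$ proportional to $g$ and using $\|g\|_{2,\star}\ge\|g\|_b=1$ gives \eqref{eq:evalerr}. (Equivalently, one may start from the Babuska--Osborn identity expressing $\mu-\mu_h$ through $((T-T_h)\cdot,\cdot)_b$ on eigenvectors and estimate it term by term using \eqref{eq:feml2} and the eigenfunction bound just obtained.) Finally, to pass from reciprocals back to eigenvalues, note that $\mu_h\to\mu\neq0$, so for $h$ small $\mu\mu_h$ is bounded below and $|E-E_h|=|\mu-\mu_h|/(\mu\mu_h)\le C|\mu-\mu_h|\le Ch^2\|g\|_{2,\star}$, which is \eqref{eq:evalueerr}; the same chain of reasoning applies verbatim in the cylinder case, with $\Omega$ replaced by $\Omega_{\Sigma,L}$, $\mu=E(k_\parallel)^{-1}$, and $\|\cdot\|_{2,\star}=\|\cdot\|_{2,\Omega^1_{\Sigma,L}\cup\Omega^2_{\Sigma,L}}$.

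The main obstacle is securing the \emph{quadratic} rate $O(h^2)$ in \eqref{eq:evalerr}--\eqref{eq:evalueerr} rather than the $O(h)$ that the energy norm alone provides. This rests on two ingredients that must genuinely be in place: the self-adjointness of both $T$ and $T_h$ (guaranteed by the Hermitian structure of $a_h$ and $\hat a_h$), so the eigenvalue error does not acquire an extra adjoint factor; and the $L^2$ operator-convergence \eqref{eq:feml2}, i.e.\ the Aubin--Nitsche duality estimate, which in turn requires the interface source problem — both on the torus and on the truncated cylinder — to enjoy the piecewise-$H^2$ regularity \eqref{eq:reg} with constants independent of the interface location and the mesh size. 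A secondary, more structural point is that the abstract theory is being applied in a nonconforming setting, $S_h\not\subset V_a$: one must invoke the consistency lemmas to ensure $a_h(\cdot,\cdot)$ (resp.\ $\hat a_h(\cdot,\cdot)$) is well defined and coincides on $V_a+S_h$, which is precisely what legitimizes the coupled definitions $A_h(Tf,g)=(f,g)$ for $g\in V_a$ and $a_h(T_hf,g_h)=(f,g_h)$ for $g_h\in S_h$ with the same bilinear form, and hence the whole operator-approximation framework.
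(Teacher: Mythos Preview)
Your argument is correct and reaches the same conclusions, but your route to the eigenvalue estimate \eqref{eq:evalerr} differs from the paper's. You bound $|\mu-\mu_h|$ directly by $\|(T-T_h)|_{R(P)}\|_b$ and then invoke the $L^2$ duality estimate \eqref{eq:feml2}. The paper instead starts from the Babu\v{s}ka--Osborn inequality (Theorem~7.3 in \cite{BO1991}),
\[
|\mu-\mu_h|\le C\sum_{j,k}\bigl|((T-T_h)v_j,v_k)\bigr|+C\,\|(T-T_h)|_{R(P)}\|_b^2,
\]
and bounds the dominant first term by rewriting $((T-T_h)v_j,v_k)=a_h(Tv_j-T_hv_j,\,Tv_k-T_hv_k)$ via Galerkin orthogonality \eqref{eq:gal}, then applying the continuity \eqref{eq:cont} together with the \emph{energy-norm} source estimate \eqref{eq:femh1} twice. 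Thus the paper recovers the $O(h^2)$ eigenvalue rate as the square of the $O(h)$ energy error, using \eqref{eq:feml2} only for the subdominant remainder and for the eigenfunction bound; your approach is more compact but leans entirely on the Aubin--Nitsche step. For the eigenfunction estimate the paper simply cites Theorem~7.4 of \cite{BO1991}, whereas you carry out the resolvent-integral argument $\|P-P_h\|\le C\|T-T_h\|$ explicitly and normalize $Pg_h$; both are valid. Your closing remarks on the nonconforming setting and the need for piecewise-$H^2$ regularity are on point and correspond precisely to the consistency lemmas and \eqref{eq:reg} in the paper.
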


\begin{proof} \textbf{In order to justify the estimate \eqref{eq:efunerr},  we apply the  Theorem ~7.4 in \cite{BO1991} and the  operator approximation
result \eqref{eq:feml2}, and deduce that}
\begin{align*}
\|g-g_h\|_{0, \Omega} & \le \|(T-T_h)|_{R(P)}\|_b = \sup_{\substack{ q\in R(P)\\
 |\| q|\|_h = 1}} \|T q - T_h q\|_{0, \Omega}
 \le Ch^2 \|\phi\|_{2, \star},
\end{align*}
which completes the proof of \eqref{eq:efunerr}.

Then, we turn to the estimate  \eqref{eq:evalerr}.   Let $v_1$, \ldots, $v_m$ be any basis for $R(P)$. Then, Theorem ~7.3 in \cite{BO1991}  implies that there exists a constant $C$ such that
 \begin{equation}\label{eq:eigapp}
|\mu - \mu_h| \le C \sum_{j,k = 1}^m |((T-T_h)v_j, v_k)| + C\|(T-T_h)|_{R(P)}\|_{0, \Omega}^2.
\end{equation}

To establish upper bound for $|\mu-\mu_h|$, it is sufficient to bound the first term in \eqref{eq:eigapp}. Using  \eqref{eq:source}, \eqref{eq:dissource} and
 the Galerkin orthogonality \eqref{eq:gal} (or \eqref{eq:gal_cy}), we obtain the \eqref{eq:evalerr} by the following calculations
 \begin{equation}
\begin{split}
  ((T-T_h)v_j, v_k) =& (v_j, (T-T_h)v_k)\\
  = &a_h(Tv_j, Tv_k - T_hv_k)\\
  = &a_h(Tv_j-T_hv_j, Tv_k - T_hv_k) + a_h(T_hv_j, Tv_k - T_hv_k)\\
    = &a_h(Tv_j-T_hv_j, Tv_k - T_hv_k) + \overline{a_h(Tv_k - T_hv_k, T_hv_j)}\\
    = &a_h(Tv_j-T_hv_j, Tv_k - T_hv_k)\\
    \le &C|||Tv_j-T_hv_j|||_h |||Tv_k-T_hv_k|||_h\\
    \le& Ch^2\|v_j\|_{2, \star}\|v_k\|_{2, \star}\\
    \le& Ch^2\|g\|_{2, \star}^2.
\end{split}
\end{equation}

The last estimate \eqref{eq:evalueerr} is actually a direct consequence of \eqref{eq:evalerr} by recalling that $\mu=E^{-1}$ (or $\mu_h=E_h^{-1})$. 
%\textbf{Check this is correct or not. Please then delete the following italic sentence.}
%\emph{ Notice the relationship between $\mu$ (or $\mu_h$) and $E$ (or $E_h$) \textbf{We should point this out explicitly. Like ``Recall that $\mu=E^{-1}$ (or $\mu_h=E_h^{-1})$"}.  Then, \eqref{eq:evalueerr} is direct implication of \eqref{eq:evalerr}.}
\end{proof}

\section{Numerical Examples} \label{sec:num}
In this section, we present a series of benchmark numerical examples to verify and validate our theoretical results and demonstrate that the proposed unfitted Nitsche's  methods are effective and efficient numerical methods to compute   the dispersion relation and wave modes for topological materials with very high contrast material weights.

\subsection{Numerical examples for computing dispersion relations}
In this subsection,  we numerically investigate the performance of the unfitted Nitsche's method for computing the dispersion relations of the bulk, i.e. the material weight is $\Lambda$-periodic. We choose the material weight $W$ in  \eqref{eq:materialweight2} with

\begin{equation*}
  \epsilon(\bx)=\left\{\begin{array}{ll}
  1+J, \quad \text{if} \ \bx \in \Omega_1, \\
   1, \,\quad\quad\,\,\,\, \text{if} \ \bb x \in \Omega_2.
    \end{array}\right.
\end{equation*}
The jump ratio of the material coefficient is $(1+J)^2$.   For  large $J$,  we have high contrast material weight.
The radius of $B_r(\bb A)$ and $B_r(\bb B)$ is chosen to be $0.2$.

\subsubsection{Verification of Accuracy}
In this part, we run a series of tests to show the optimal convergence of the numerical eigenvalue obtained by the unfitted Nitsche's method.
To measure the errors, we introduce  the following relative error  of eigenvalues
\begin{equation*}
e_i =  \frac{|E_{i, h_j}(\bk) - E_{i, h_{j+1}}(\bk)|}{ E_{i, h_{j+1}}(\bk)}.
\end{equation*}

\begin{figure} [!h]
   \centering
   \subcaptionbox{\label{fig:ecr20}}
  {\includegraphics[width=0.46\textwidth]{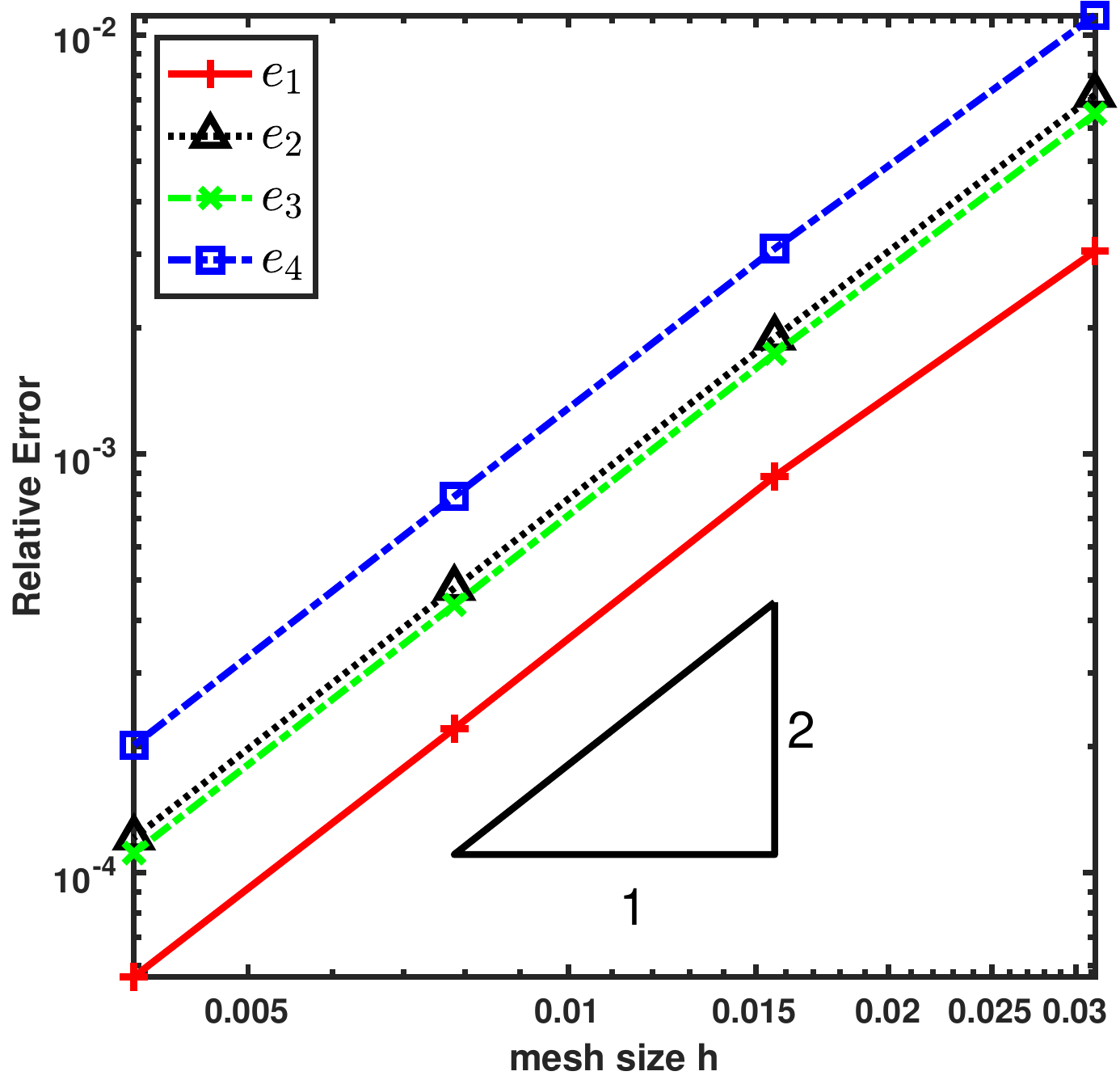}}
  \subcaptionbox{\label{fig:ecr21}}
   {\includegraphics[width=0.46\textwidth]{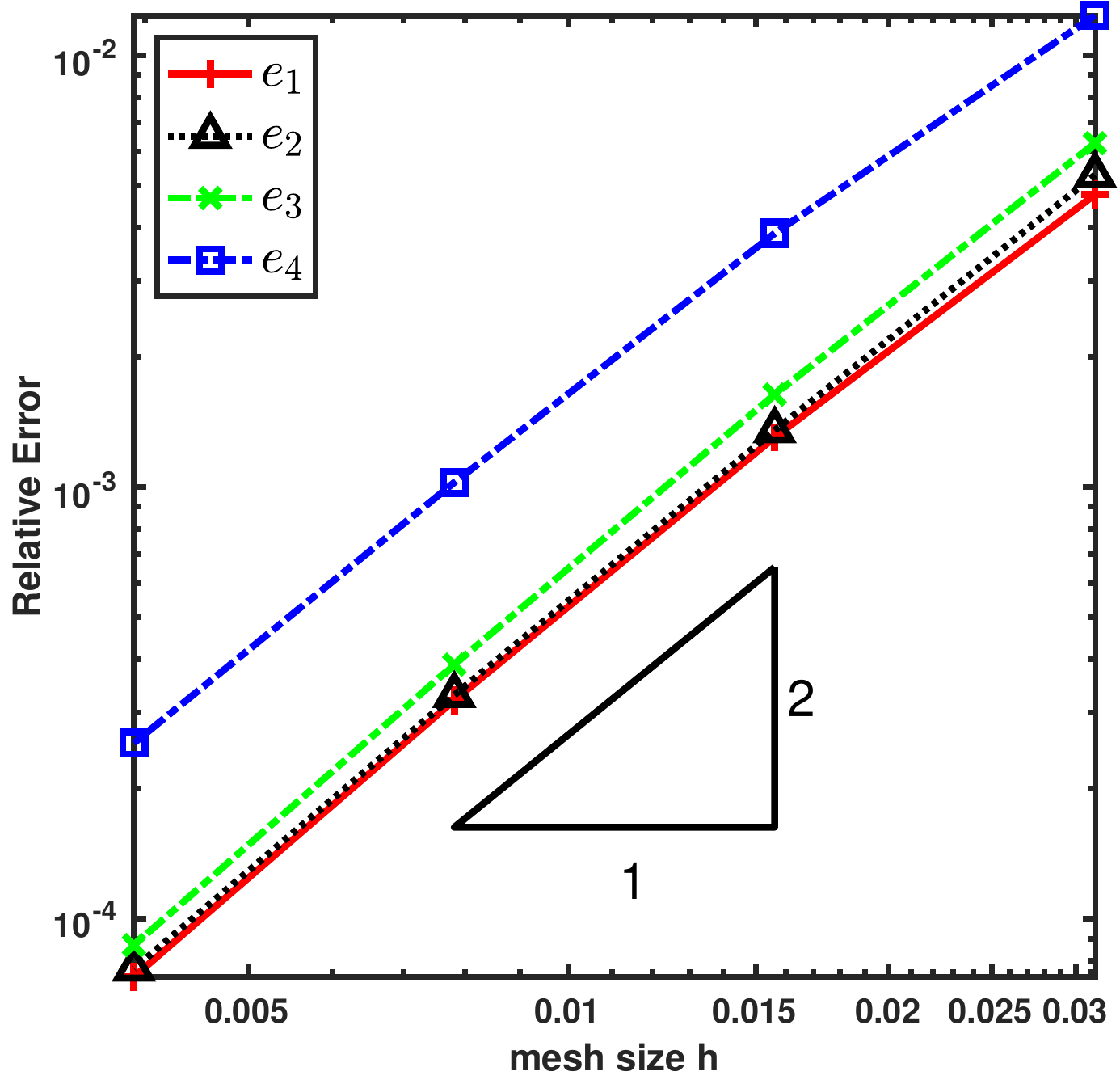}}
   \caption{Numerical errors for eigenvalue approximation: (a) $J=2$ and $\gamma=0$; (b) $J=2$ and $\gamma=0.1$.}
   \label{fig:err2}
\end{figure}

\begin{figure}[!h]
   \centering
   \subcaptionbox{\label{fig:ecr1000}}
  {\includegraphics[width=0.46\textwidth]{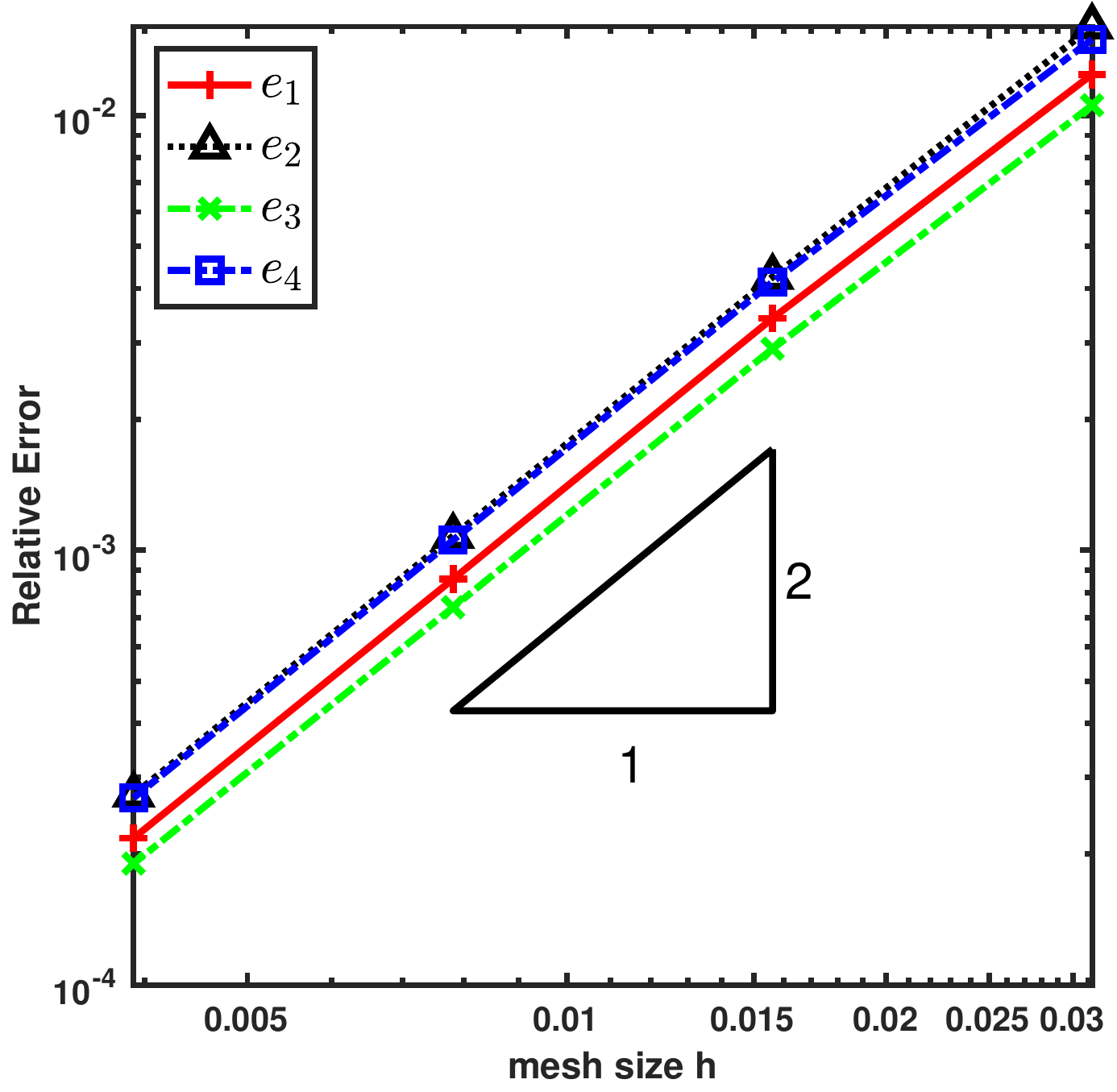}}
  \subcaptionbox{\label{fig:ecr1001}}
   {\includegraphics[width=0.46\textwidth]{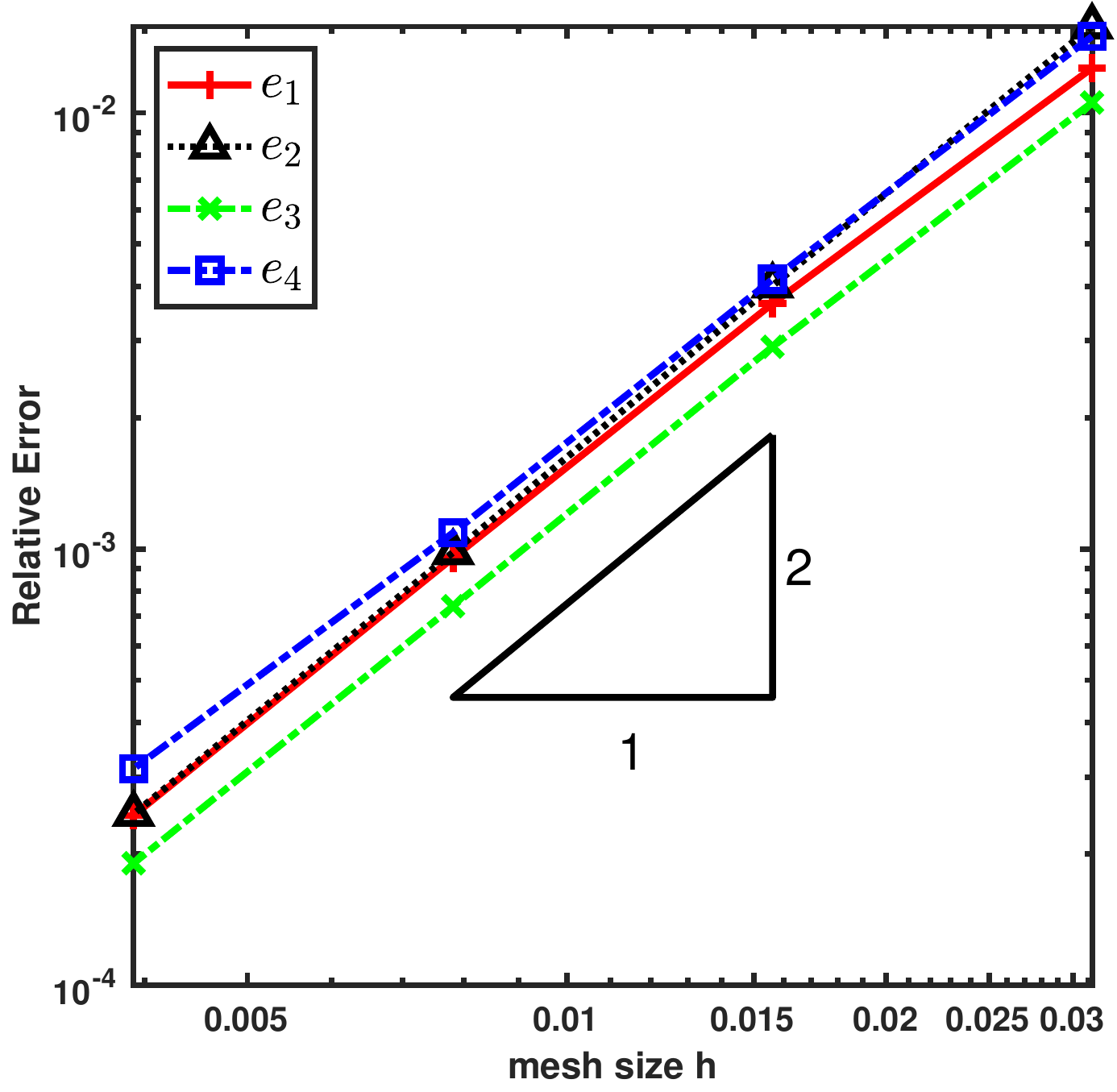}}
   \caption{Numerical errors for eigenvalue approximation: (a) $J=100$ and $\gamma=0$; (b) $J=100$ and $\gamma=0.1$.}
   \label{fig:err100}
\end{figure}

%\begin{figure}[!h]
%   \centering
%   \subcaptionbox{\label{fig:drc2h0}}
%  {\includegraphics[width=0.24\textwidth]{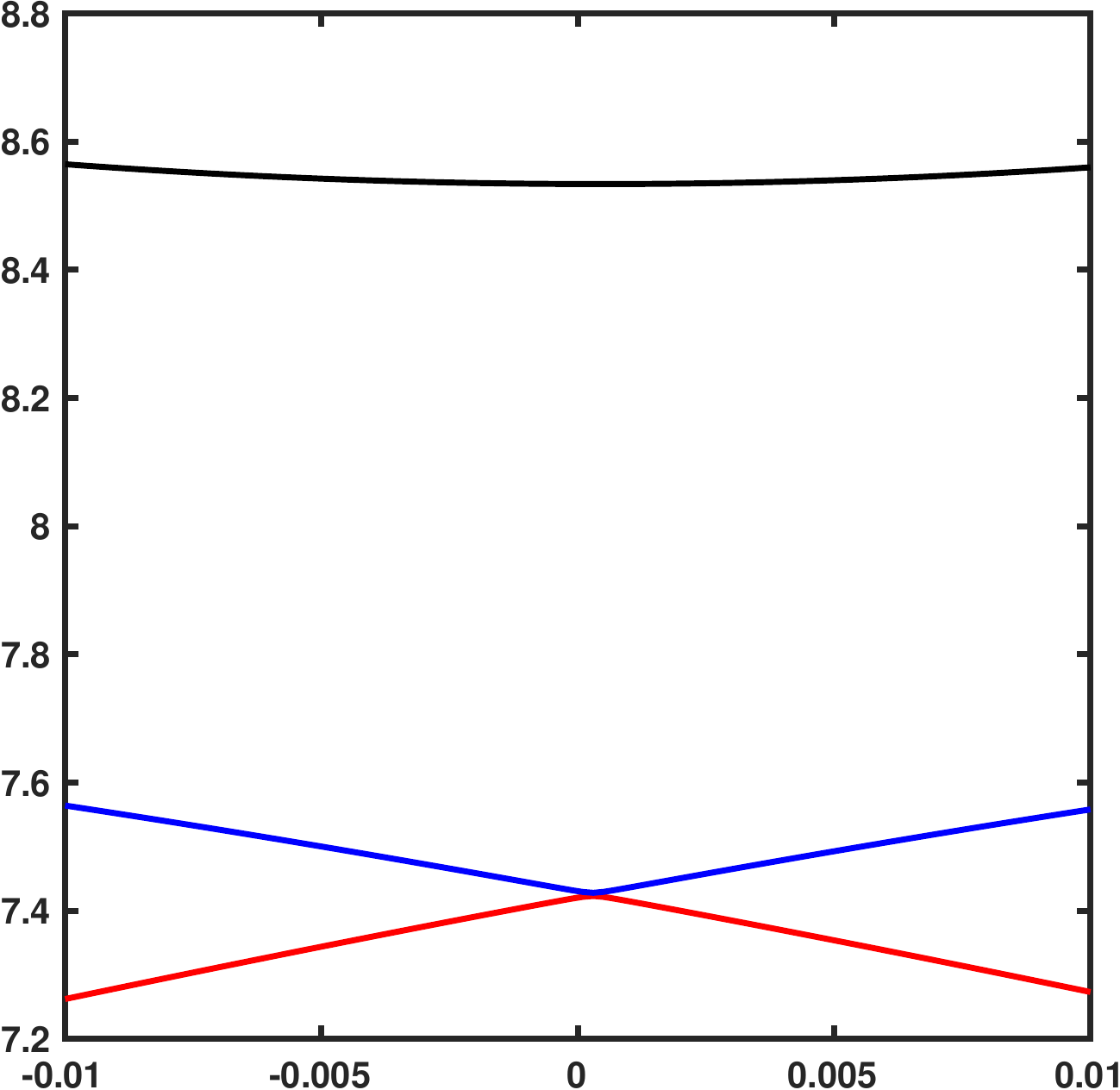}}
%  \subcaptionbox{\label{fig:smc2h0}}
%   {\includegraphics[width=0.24\textwidth]{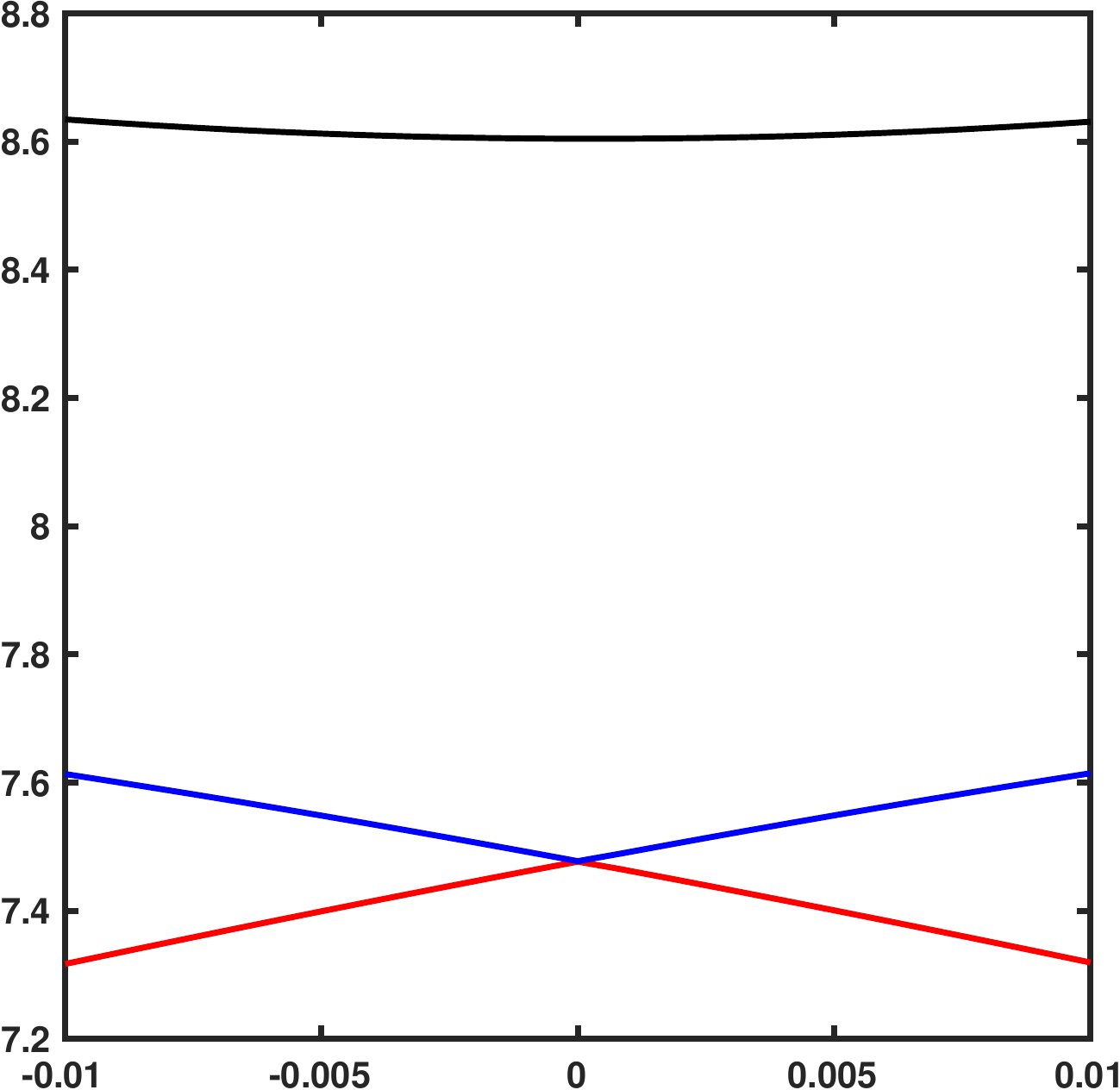}}
%     \subcaptionbox{\label{fig:drc2h1}}
%  {\includegraphics[width=0.24\textwidth]{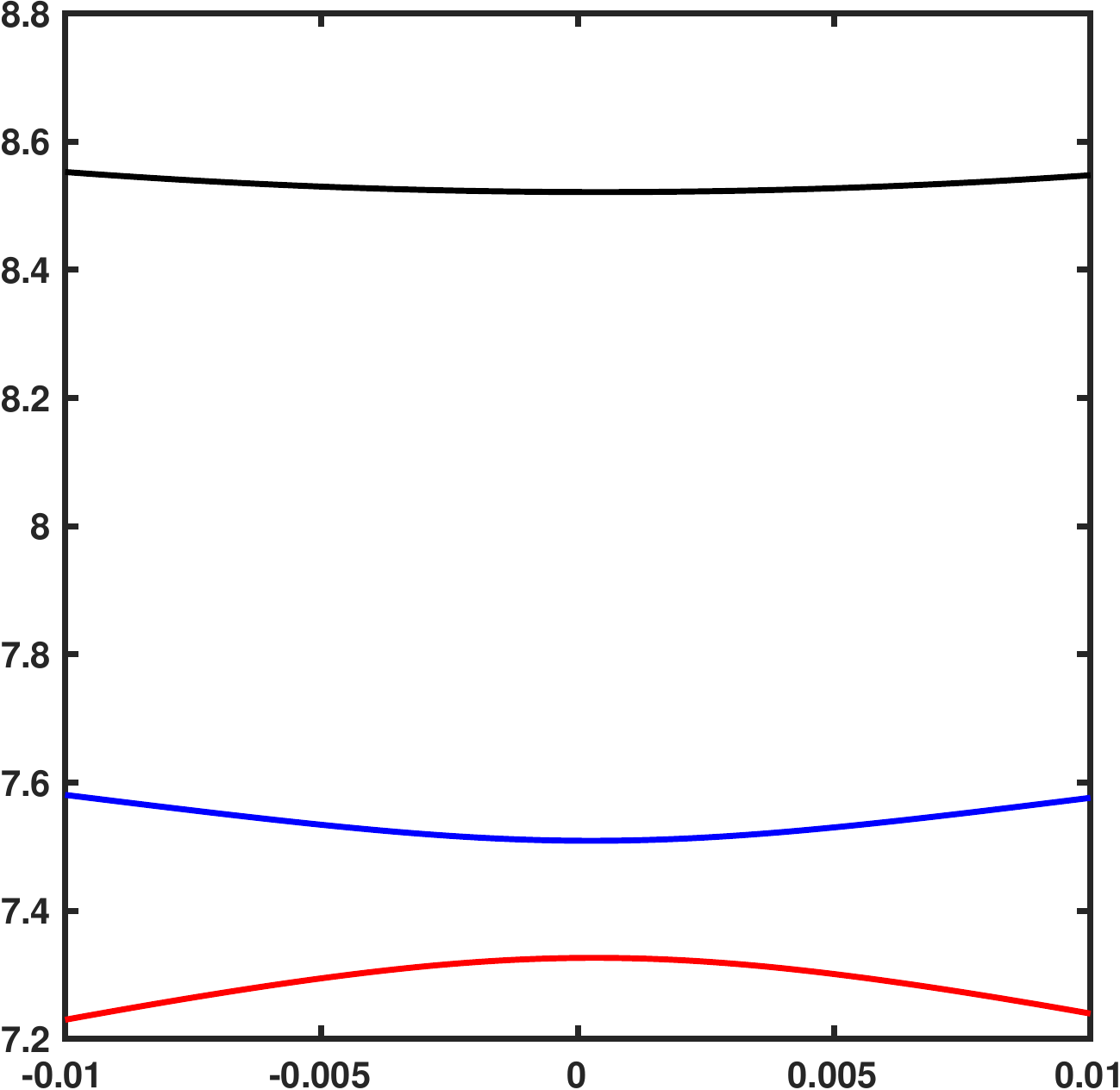}}
%  \subcaptionbox{\label{fig:smc2h1}}
%   {\includegraphics[width=0.24\textwidth]{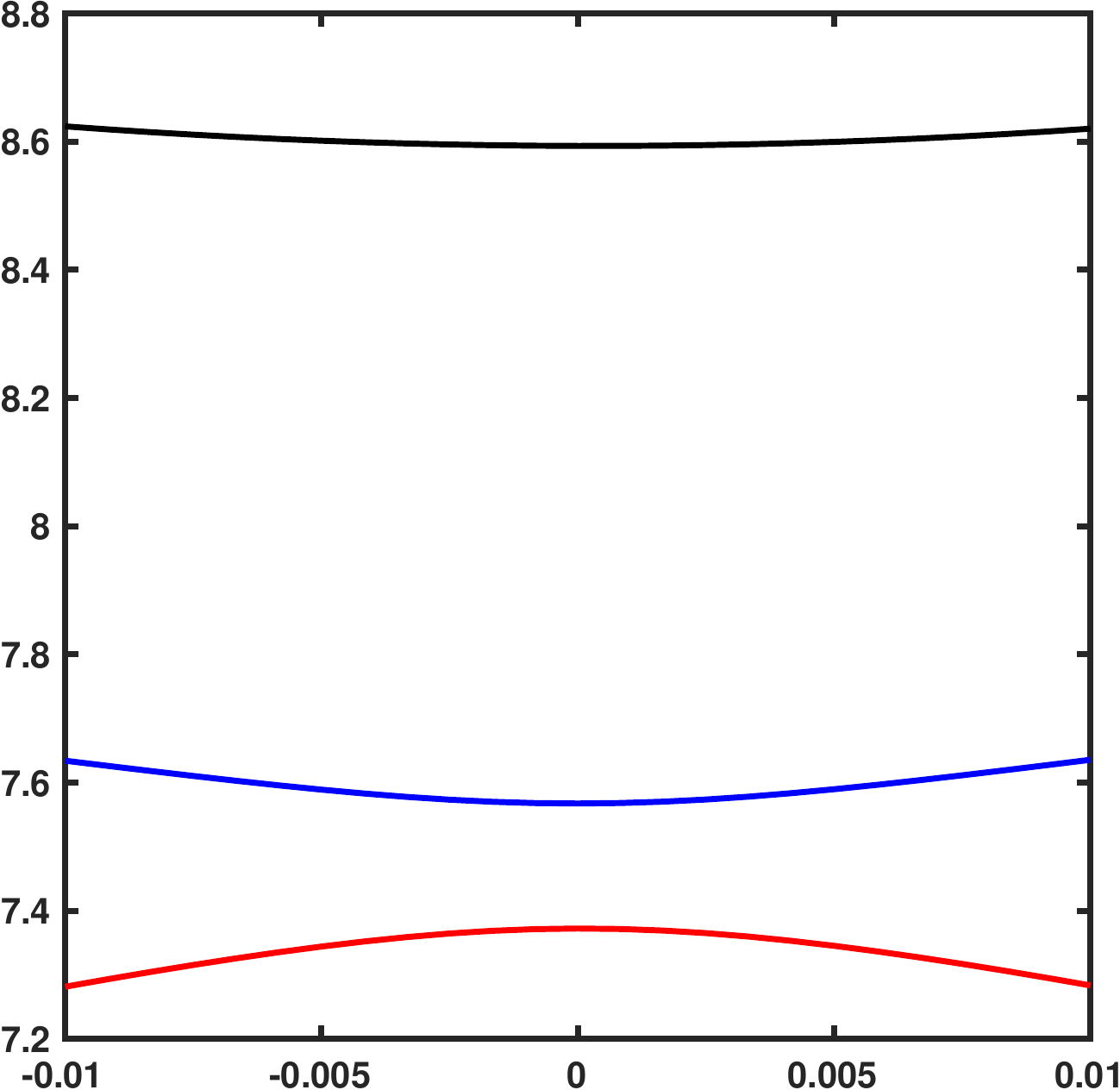}}
%   \caption{Dispersive relation when   $J=2$  (a)  Unfitted Nitsche's Method with $\gamma = 0$; (b) Spectral Method with $\gamma = 0$;
%   (c)  Unfitted Nitsche's Method with $\gamma = 0.1$; (d) Spectral Method with $\gamma = 0.1$.}
%   \label{fig:c1}
%\end{figure}

\begin{figure}[!h]
   \centering
   \subcaptionbox{\label{fig:drc30h0}}
  {\includegraphics[width=0.24\textwidth]{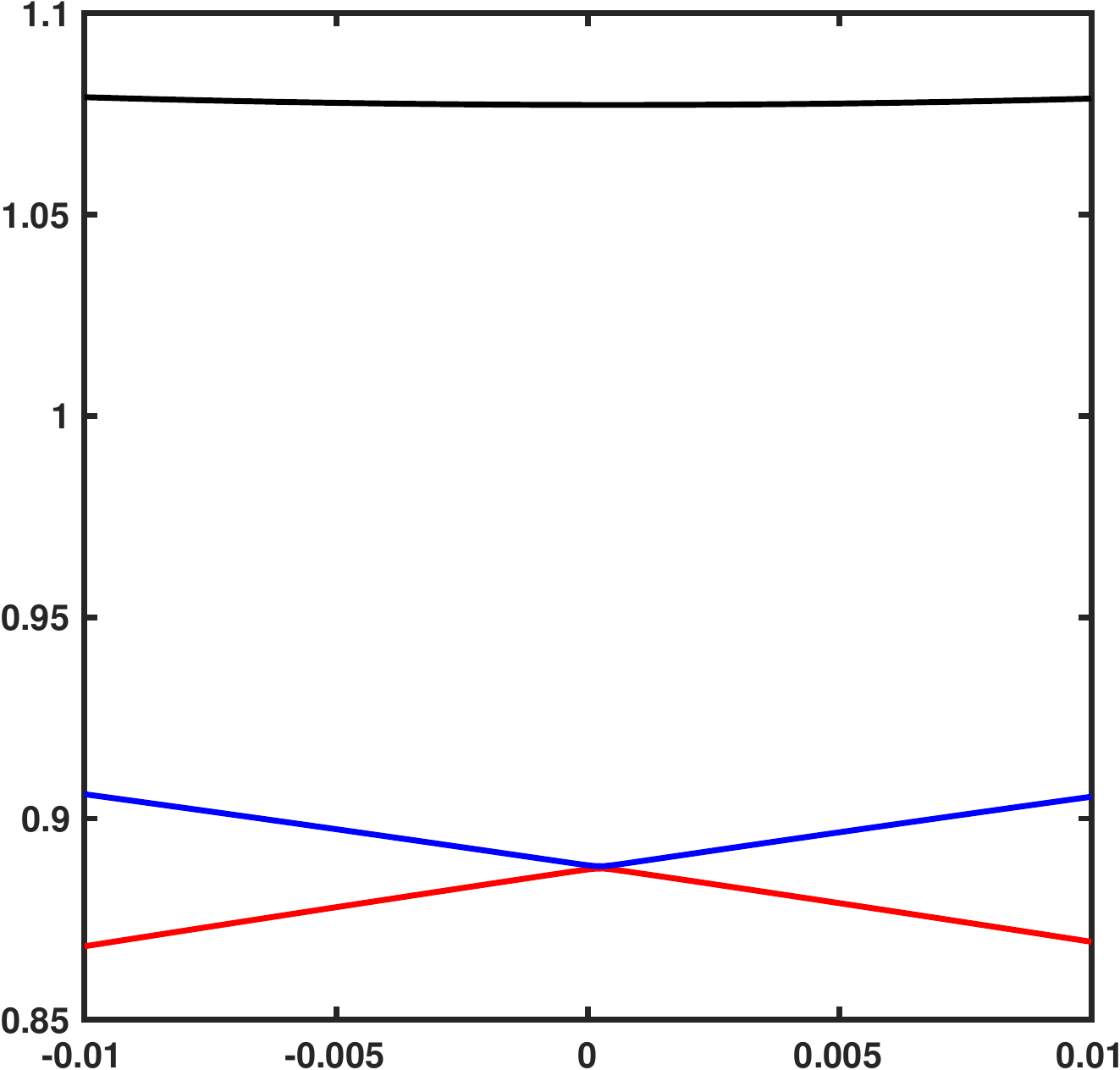}}
  \subcaptionbox{\label{fig:smc30h0}}
   {\includegraphics[width=0.24\textwidth]{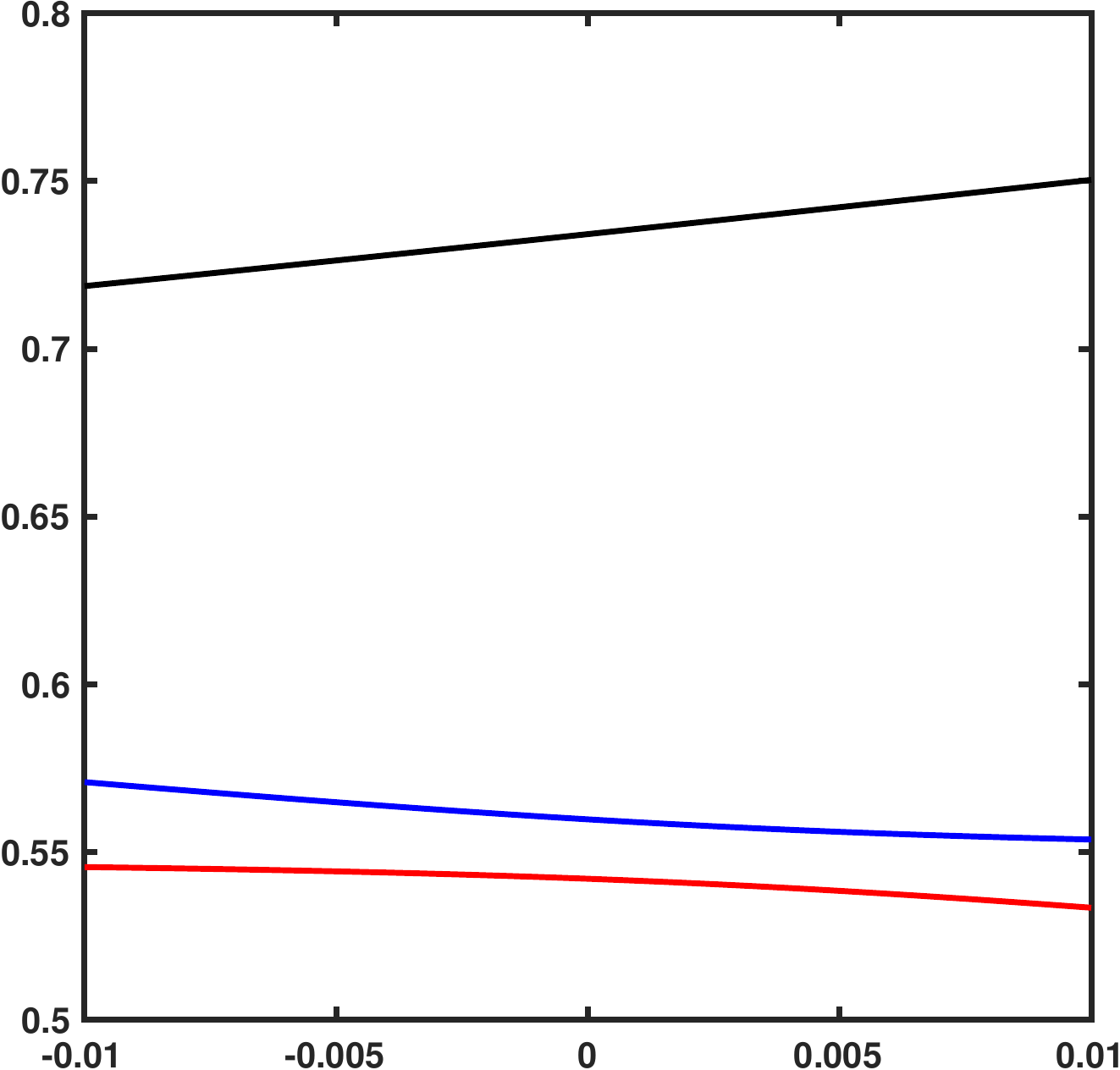}}
      \subcaptionbox{\label{fig:drc30h1}}
  {\includegraphics[width=0.24\textwidth]{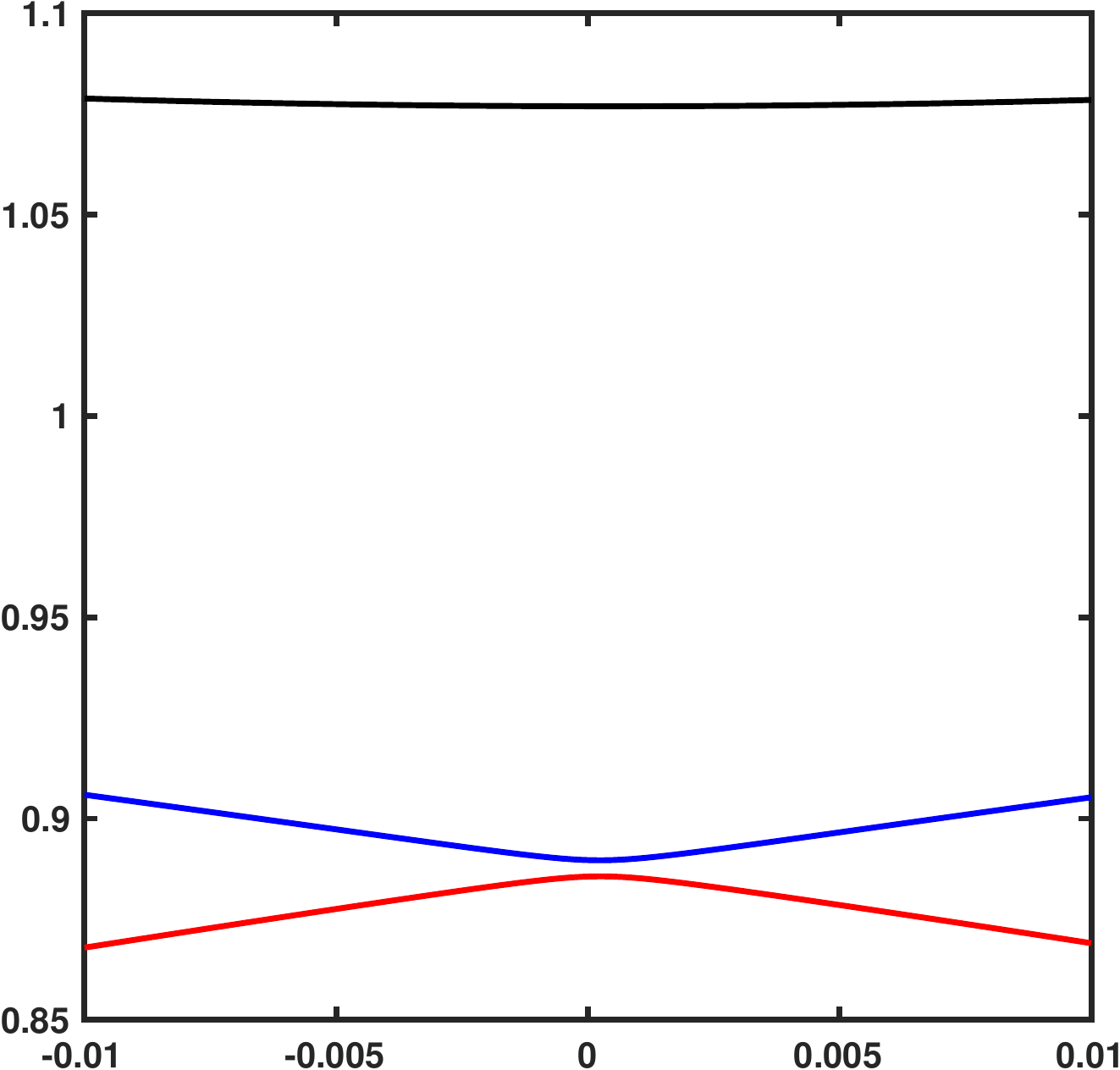}}
  \subcaptionbox{\label{fig:smc30h1}}
   {\includegraphics[width=0.24\textwidth]{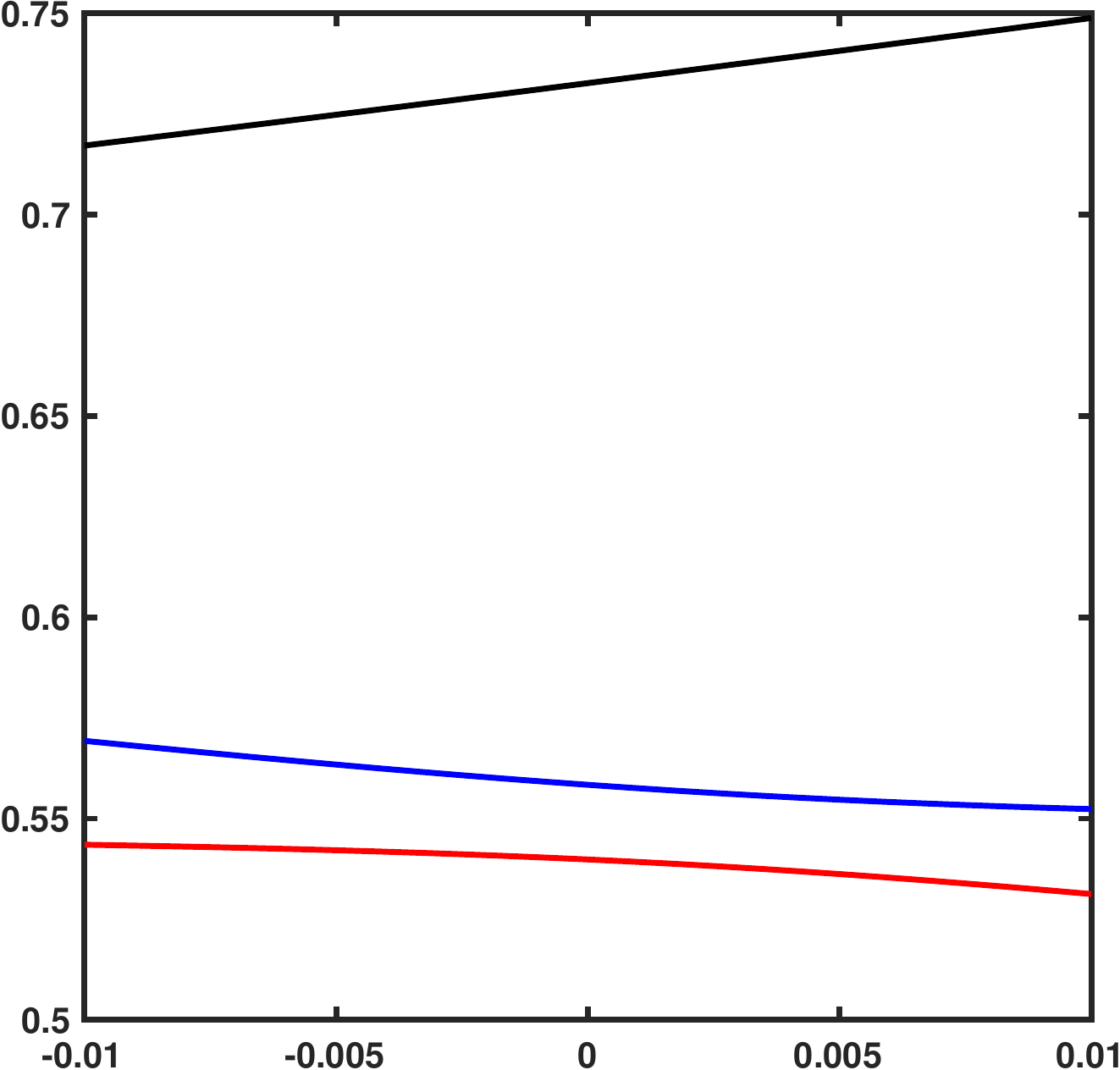}}
   \caption{Dispersion relations when   $J=30$:  (a)  Unfitted Nitsche's Method with $\gamma = 0$; (b) Spectral Method with $\gamma = 0$;
   (c)  Unfitted Nitsche's Method with $\gamma = 0.1$; (d) Spectral Method with $\gamma = 0.1$}
   \label{fig:c30}
\end{figure}

\begin{figure}[!h]
   \centering
   \subcaptionbox{\label{fig:comp_c2h0}}
  {\includegraphics[width=0.47\textwidth]{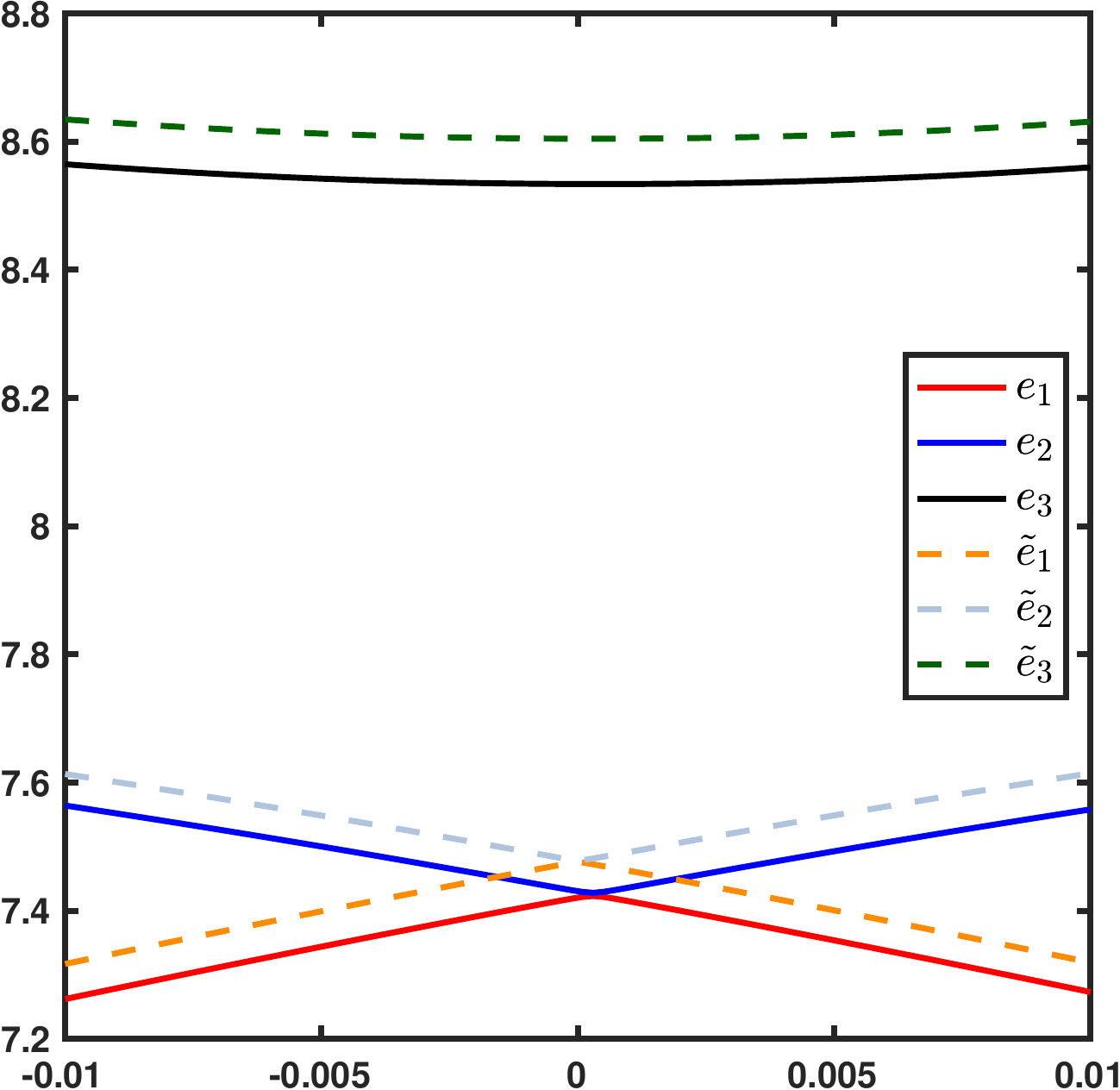}}
  \subcaptionbox{\label{fig:comp_c2h1}}
   {\includegraphics[width=0.47\textwidth]{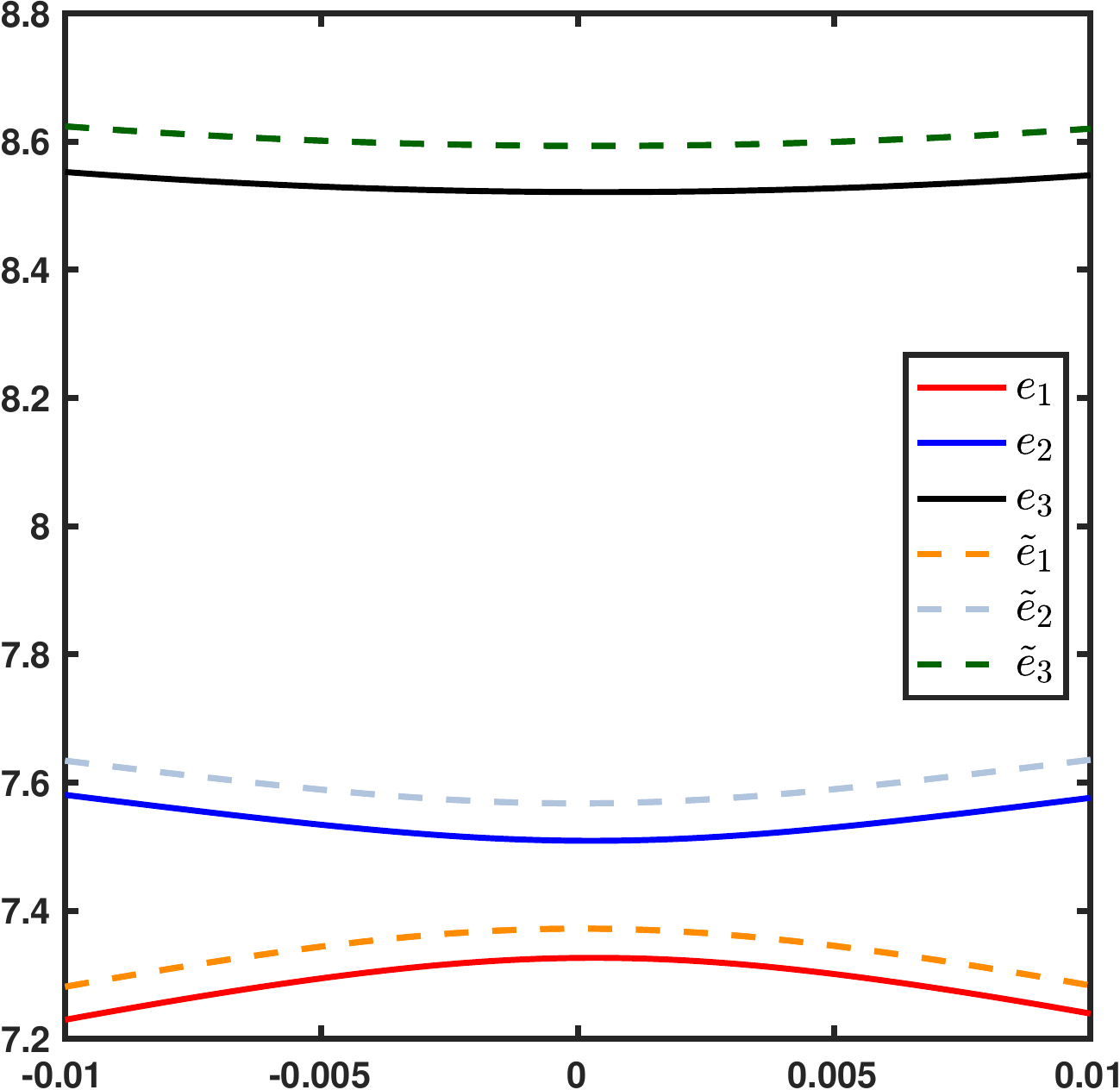}}
   \caption{Comparison of the unfitted Nitsche's method and the Fourier spectral method (solid line: the Unfitted Nitsche's method and dashed line: the Fourier spectral method):  (a) $J=2$ and   $\gamma=0$; (b) $J=2$ and   $\gamma=0.1$}
   \label{fig:comp}
\end{figure}

%\begin{figure}[!h]
%   \centering
%   \includegraphics[width=0.4\textwidth]{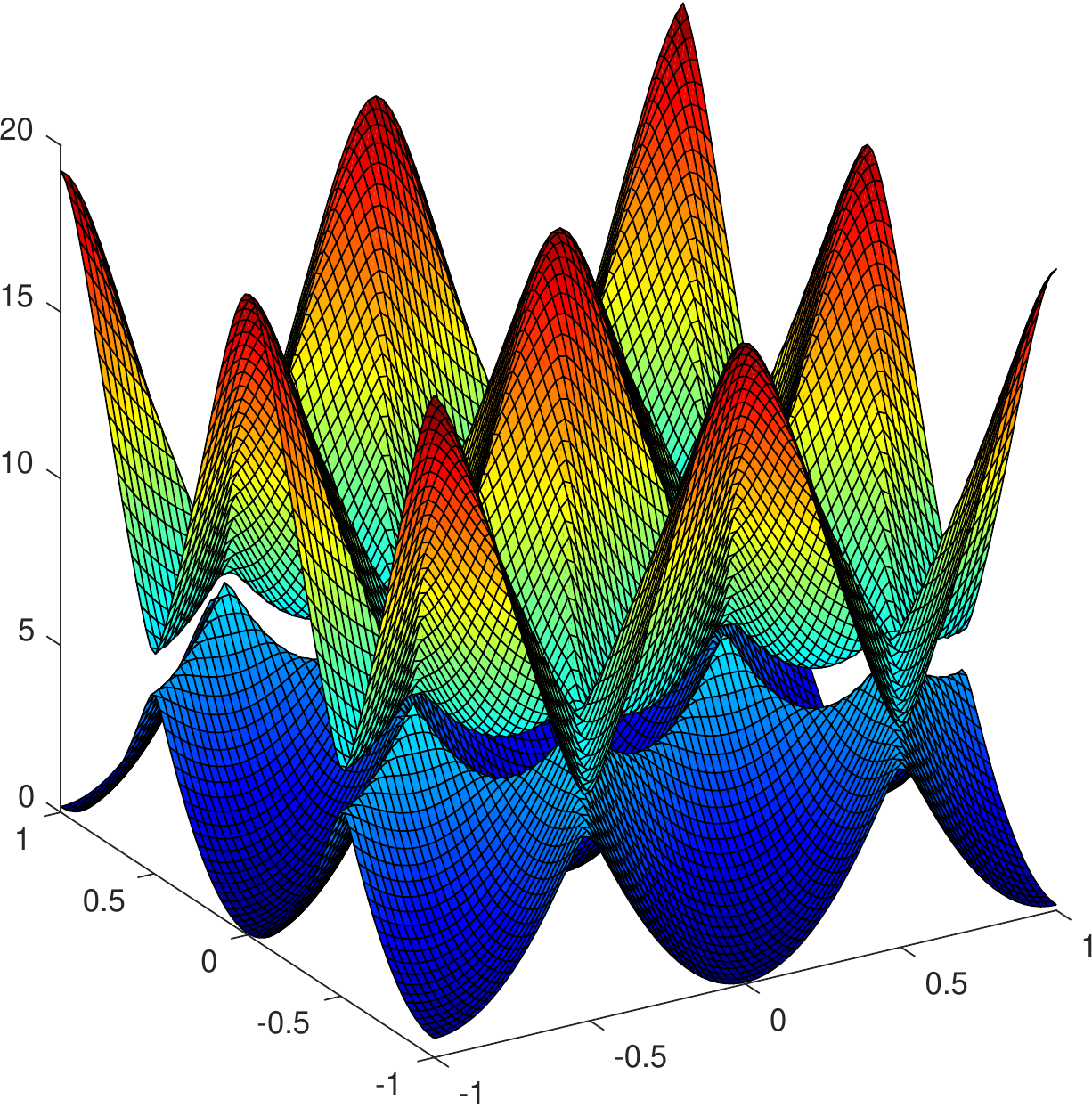}
%   \caption{Lowest two dispersion surfaces}\label{fig:band}
%\end{figure}
%

Different values of $J$ and $\gamma$ are chosen to test our numerical methods.   The numerical  errors of the first four eigenvalues  are plotted in Figs \ref{fig:err2}--\ref{fig:err100}, where the numerical eigenvalues converge at
the optimal rate $\mathcal{O}(h^2)$.   This confirms that the error estimate for the unfitted Nitsche's method is uniform with respect to the jump ratio $(J+1)^2$.

\subsubsection{Numerical investigation of the dispersion relations}
In this part, we compute the dispersion relations of the bulk and make comparisons with the Fourier spectral methods \cite{skorobogatiy2009fundamentals,LWZ2019}, which expands both the material weight $W(\bx)$ and eigenfunctions in terms of Fourier series.
%For the Fourier spectral method,  we use 512 Fourier modes in each direction to represent the unknown functions.
For the unfitted Nitsche's method, we use the meshes with mesh size $h =\frac{1}{64}$. For the Fourier spectral method,  we use at least 16 Fourier modes in each direction.

%We plot the dispersive relations computed by the unfitted Nitsche's method and
%the Fourier spectral method for  different choices of $J$ and $\gamma$ in Figures \ref{fig:c1h0}--\ref{fig:c30h1}.
%First of all, we consider the case $J=2$.  We plot the dispersive relations computed by the unfitted Nitsche's   and
%the Fourier spectral methods for $\gamma = 0$ and $\gamma=0.1$ in Figure  \ref{fig:c1}.
%In this small jump ratio case, both methods generate correct numerical results.  In particular,  we observe the Dirac point when $\gamma =0$. When $\gamma \neq 0$, we can clearly see that the gap between the first eigencurve and the second eigencurve opens up. The Dirac point disappears.   The observed phenomena are consistent with the theoretical analysis in \cite{LWZ2019}.

We directly consider the case with a relatively large jump ratio with $J=30$.  The numerical results are displayed in Figure  \ref{fig:c30}. For the unfitted Nitsche's method,  we observe the existence of the Dirac point for $\gamma=0$ and the disappearance of the Dirac point when $\gamma=0.1$. This agrees well with the theoretical results \cite{LWZ2019}. Unfortunately, the Fourier spectral method fails to give the correct results.  In specific, we can see that a gap between the first eigencurve and the second eigencurve opens up when $\gamma = 0$ and that the eigencurves are not symmetric which clearly violates the mathematical theory of the spectrum \cite{LWZ2019}. The performance is not improved even when we increase the number of Fourier modes in each direction.

For small jump ratio case, the Fourier spectral method seems to give a reliable result. We will see that our method can do a much better job. To make a  quantitative comparison of those two methods for the small jump ratio case, we graph the results of those two methods in the same plot
when $J=2$ in Figure \ref{fig:comp}. In the Figure,  the numerical results generated by the unfitted Nitsche's method are plotted by solid curves and the numerical results generated by the Fourier spectral method are represented by dashed curves. We can see that the numerical eigenvalues given by the unfitted Nitsche's method are lower than the counterpart given by the Fourier spectral methods. This observation implies that the unfitted Nitsche's method  is much more accurate than the Fourier  spectral method since both methods are
Galerkin methods which give upper bounds of the exact eigenvalues.

In summary, though the Fourier spectral method is widely used in photonic community, it is not a good numerical method to handle the discontinuous material weight, especially when the jump ration is large. In contrast, the unfitted Nitsche's method can give very reliable results in spite of arbitrary large jump ratio.

%The lowest two dispersion surfaces are show in Figure \ref{fig:band} with $J=...$ and $\gamma=0$. We see that two dispersion surfaces intersect at Dirac points.

%
%In Figure \ref{fig:band}, we plot two dispersion surfaces for a honeycomb structured medium,  which intersect at Dirac points for quasi-momenta located at the size vertices of the Brillouin zone.  ({\color{red} That sentence need to be modified since I just take it from \cite{LWZ2019}.})

\subsection{Numerical examples for computing edge modes}
In this subsection, we present numerical examples to show the unfitted Nitsche's method proposed in Section 4 is an efficient numerical method for computing topologically protected edge modes with high contrast material weight and supports the theoretical result for eigenvalue approximation.   We consider the material weight given
in the form
\begin{equation}\label{eq:newform}
W(\bx) =  \epsilon(\bx)^{-1} + \delta \kappa(\delta \mathbf{k}_2\cdot x) \epsilon(\bx)^{-2}\sigma_2,
\end{equation}
where
\begin{equation*}
  \epsilon(\bx)=\left\{\begin{array}{ll}
  1+J,&\text{if} \ \bx \in \Omega_1, \\
   1, &\text{if} \ \bb x \in \Omega_2.
    \end{array}\right.
\end{equation*}
In  \eqref{eq:newform}, $\delta$ is a constant. It is chosen such that the coefficient matrix $W$ is positive definite.
The function $\kappa( \cdot)$ is the transition function (domain wall function) \eqref{stepfun}.

\begin{figure} [!h]
   \centering
   \subcaptionbox{\label{fig:em_c2_d1}}
  {\includegraphics[width=0.47\textwidth]{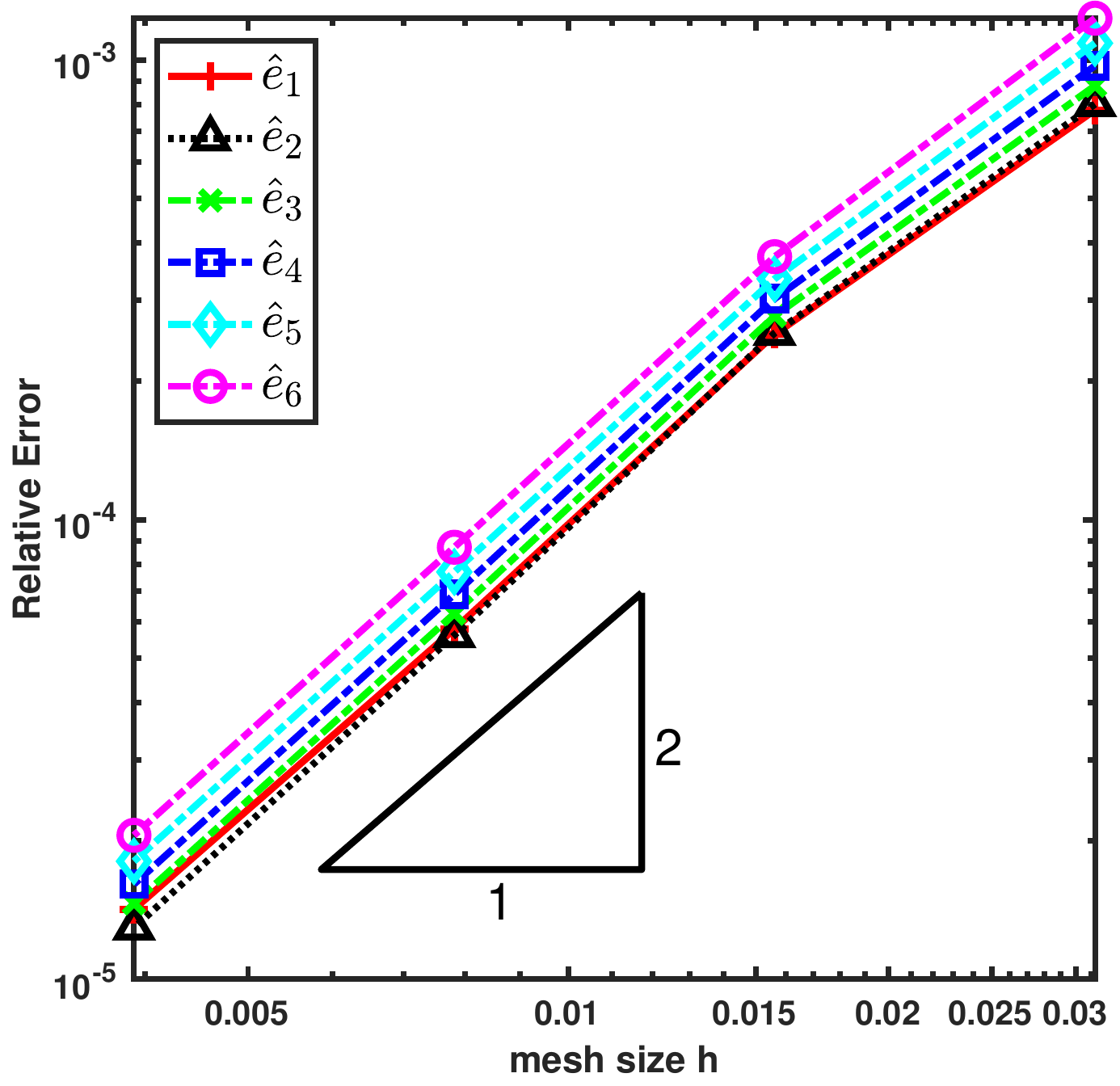}}
  \subcaptionbox{\label{fig:em_c10_d1}}
   {\includegraphics[width=0.47\textwidth]{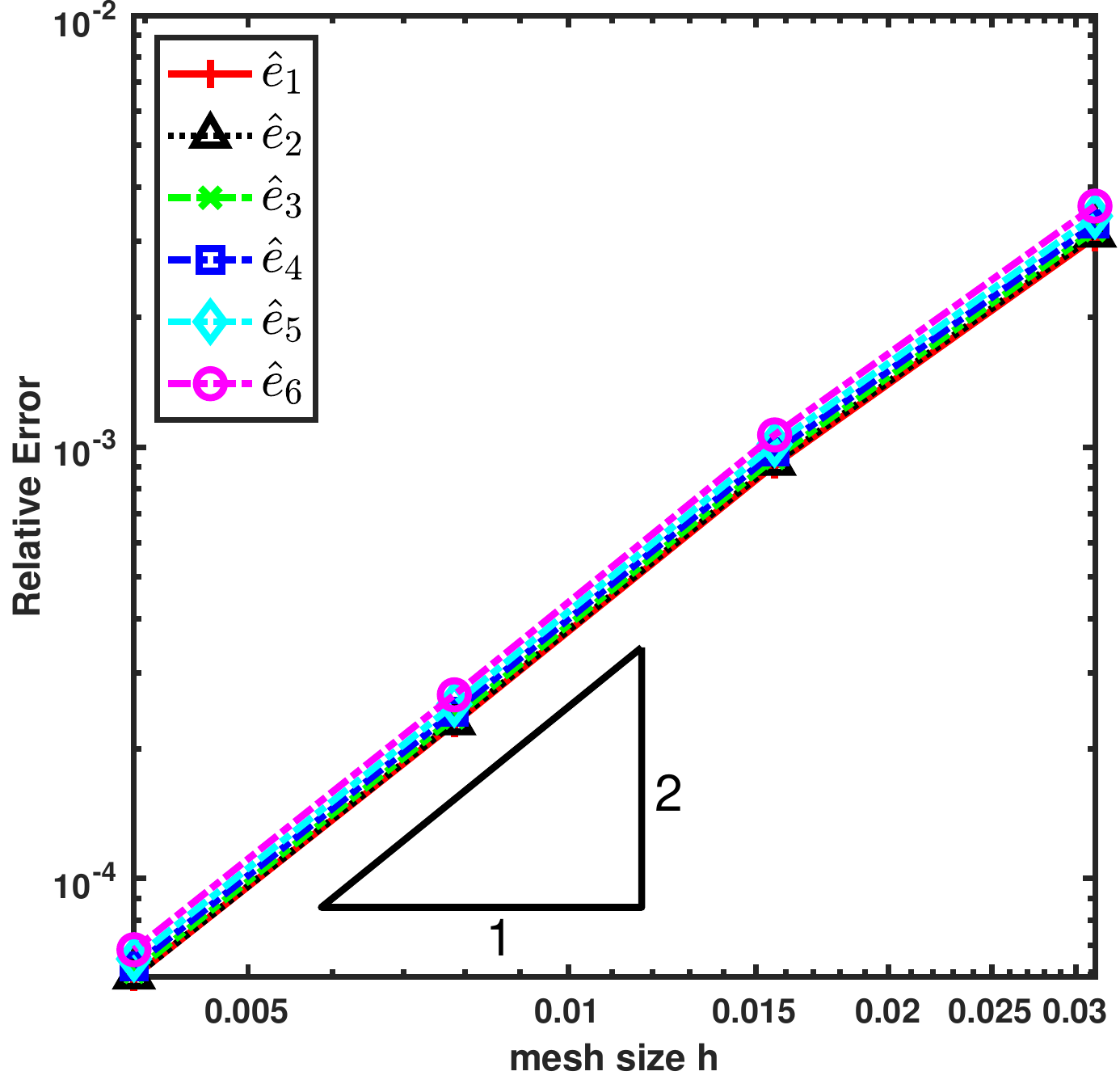}}
   \caption{Numerical errors for eigenvalue approximation: (a) $J=2$ and $\delta=0.1$; (b) $J=10$ and $\delta=0.1$.}
   \label{fig:emerr}
\end{figure}

\subsubsection{Verification of Accuracy} In this  part,  we conduct a benchmark numerical study to verify the optimal convergence of the unfitted Nitsche's method \eqref{eq:femnitsche}.    Similarly, the convergence rate is approximated by the following the relative errors
\begin{equation*}
\hat{e}_i =  \frac{|E_{i, h_j}(k_{\parallel}) - E_{i, h_{j+1}}(k_{\parallel})|}{ E_{i, h_{j+1}}(k_{\parallel})}.
\end{equation*}

In this test, we take $k_{\parallel}  = 0.56\pi$,  $\delta = 0.1$ and $L=10$. We  focus on the computation of the first six eigenvalues.  The numerical results of the convergence test are summarized in Figure \ref{fig:emerr} for $J = 2$ and $J = 10$.  From the data in Figure \ref{fig:emerr}, it is evident that the numerical eigenvalues computed by the unfitted Nitsche's
method \eqref{eq:femnitsche} converges  at the optimal rate $\mathcal{O}(h^2)$.   This is consistent with the theoretical result in the Theorem \ref{thm:eigapp}.

\subsubsection{Computation of the topological edge modes} In this paper, we provide numerical examples to  demonstrate that the proposed unfitted Nitsche's method  is an efficient  method to compute the  topological edge modes in the heterogeneous setting.

\begin{figure}[!h]
 \centering
   \subcaptionbox{\label{fig:em_c2}}
  {\includegraphics[width=0.46\textwidth]{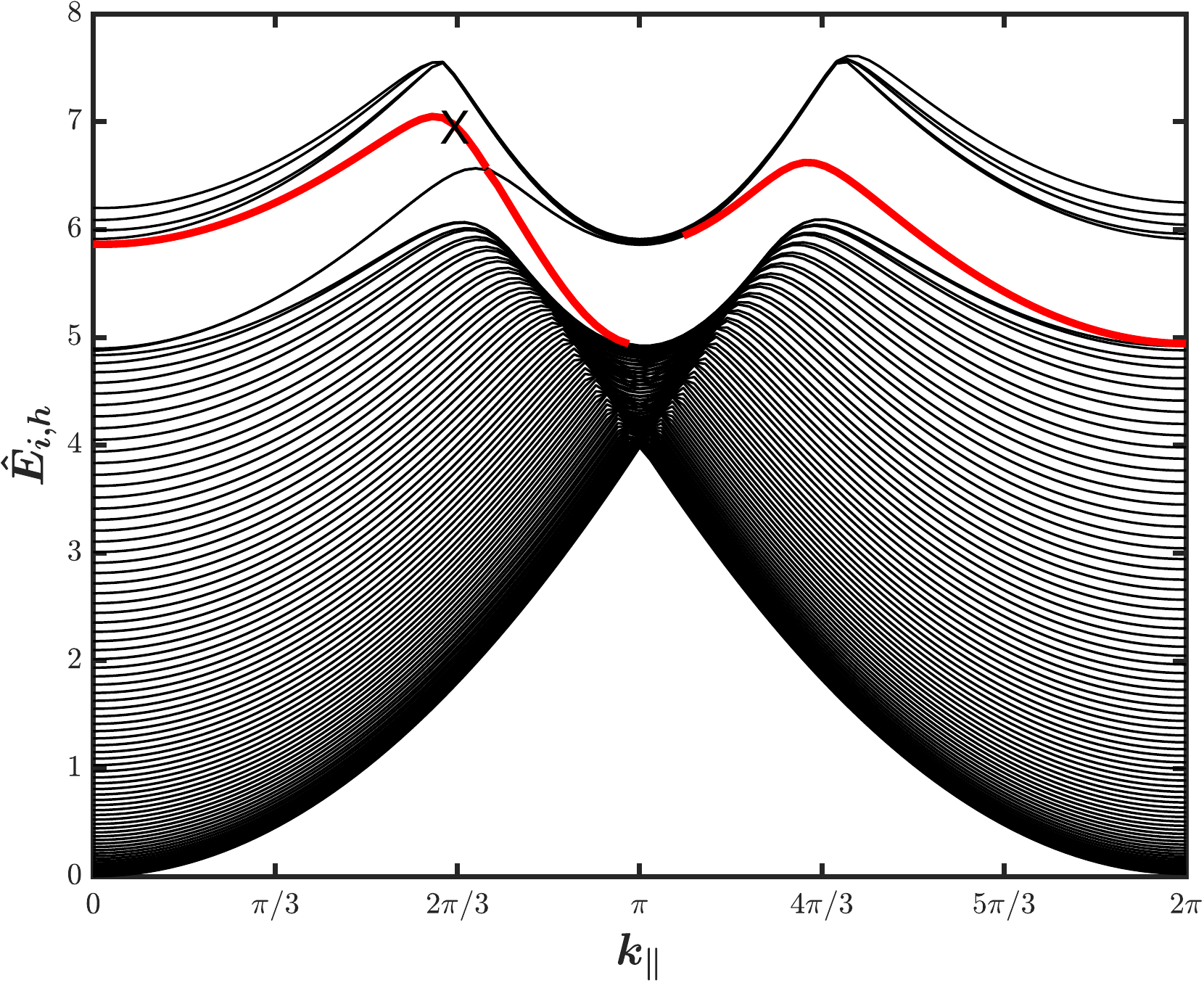}}
  \subcaptionbox{\label{fig:em_c10}}
   {\includegraphics[width=0.47\textwidth]{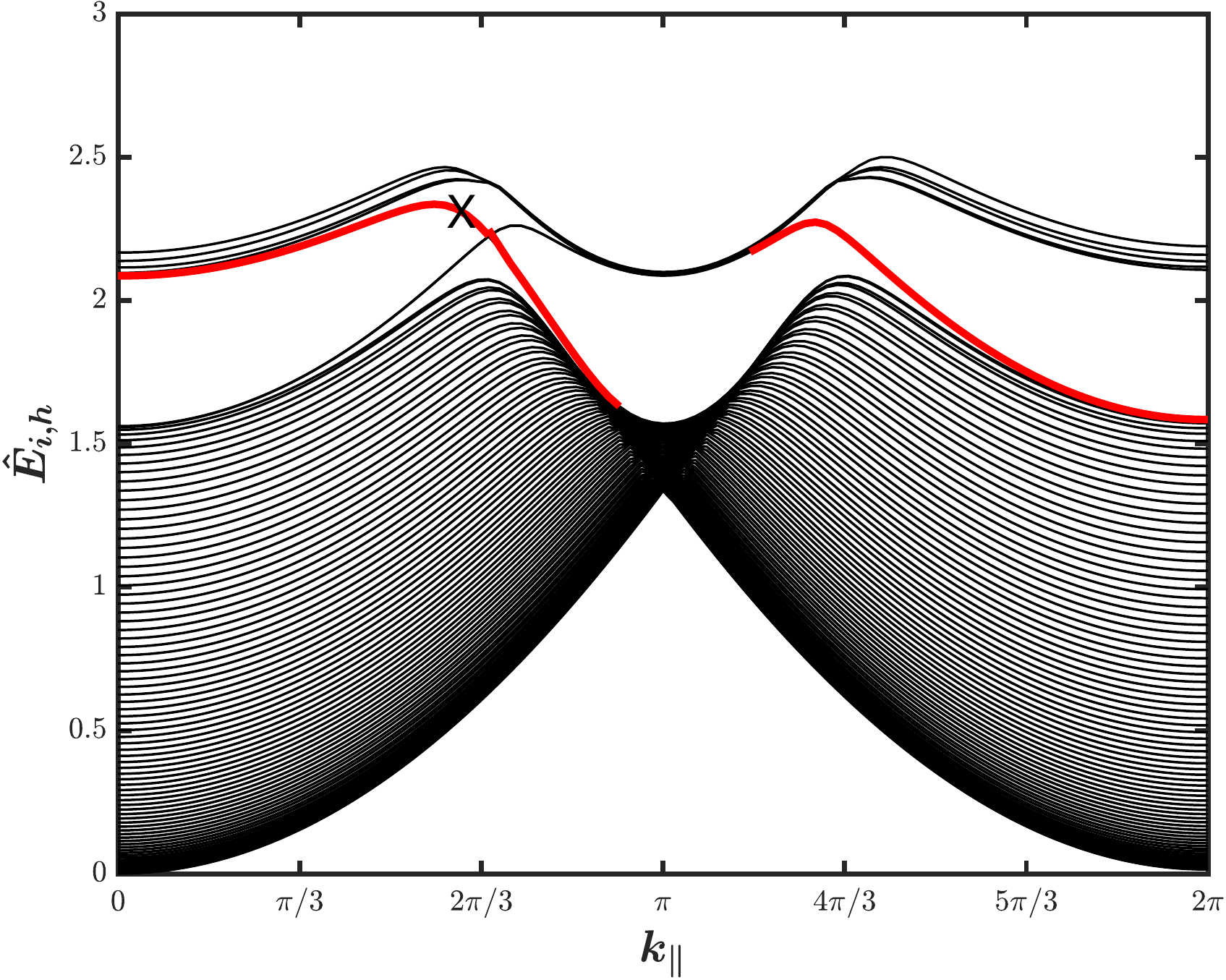}}
   \caption{Plot of the first 85 eigencurves  for with  $L = 80$ where the edge mode is corresponding to the line mark by 'X':  (a) Case $J=2$ and $\delta = 0.6$;
   (b) Case $J=10$ and $\delta = 0.7$. }
   \end{figure}

\begin{figure}[!h]
   \centering
   \subcaptionbox{\label{fig:sol_plot_c2_79}}
  {\includegraphics[width=0.28\textwidth]{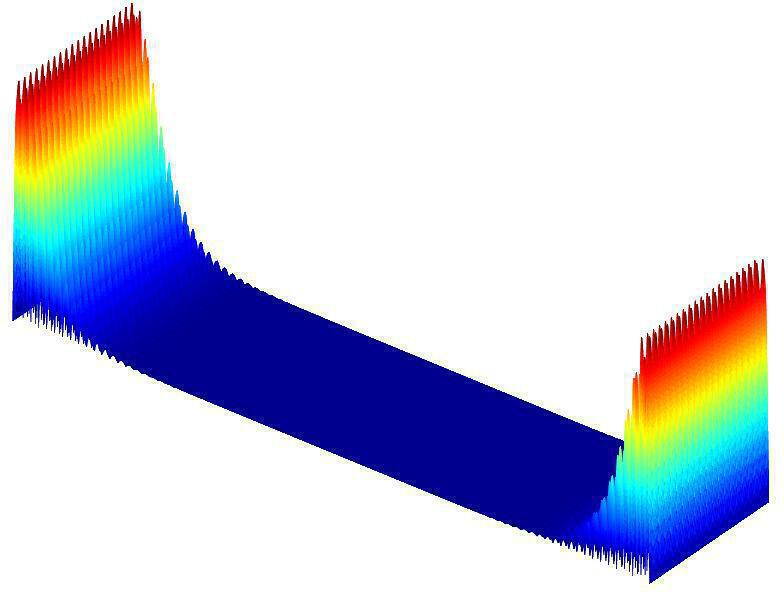}}
  \subcaptionbox{\label{fig:sol_plot_c2_80}}
   {\includegraphics[width=0.28\textwidth]{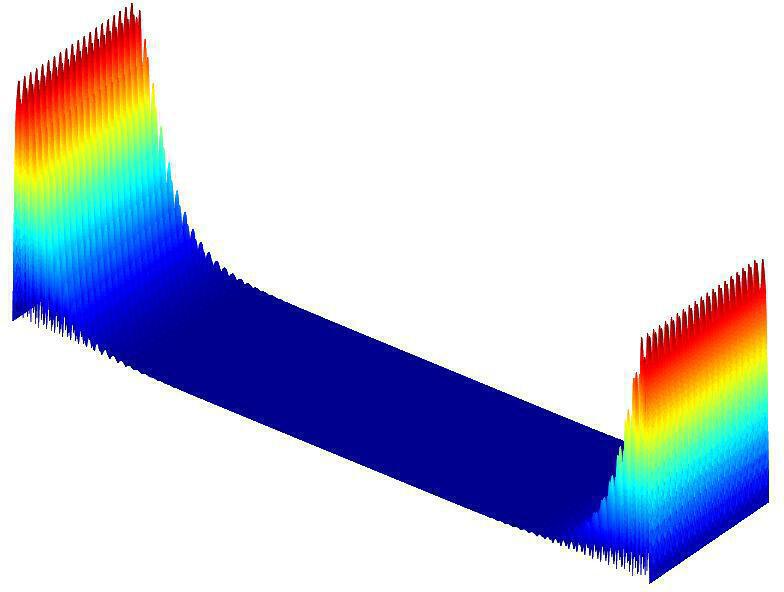}}
   \subcaptionbox{\label{fig:sol_plot_c2_81}}
   {\includegraphics[width=0.34\textwidth]{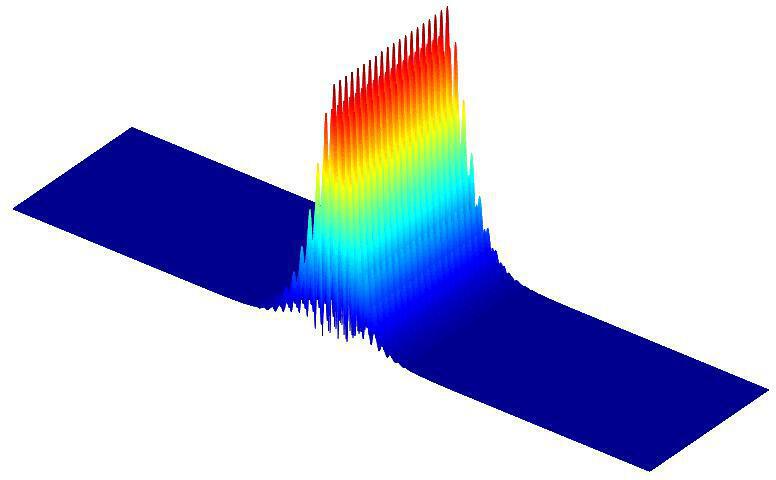}}
   \caption{Plot of the module of the eigenfunctions:  (a) The $79$th eigenfunction; The $80$th eigenfunction; (c) The $81$th eigenfunction}
   \label{fig:c2_sol}
\end{figure}

{\bf Test case  1}.  First of all, we consider the computation of the topological edge states with small jump ratio. In this test,  we choose $J = 2$, $\delta = 0.6$ and
$L = 80$.      In Figure \ref{fig:em_c2}, we show the plot of first 85 eigencurves in term of $k_{\parallel}$.      In Figure \ref{fig:em_c2},  we can see that the red eigencurve
is separated from the eigencurves, which indicates edge states. To demonstrate the existence of edge states, we sketch the modules of the 79th, 80th, and 81th eigenfunctions
at the point $k_{\parallel} = \frac{2\pi}{3}$ in Figure \ref{fig:c2_sol}.  What stands out in the table is that the 81th eigenfunction is located at the center of the computational domain
but the  79th and  80th are both located the boundary of the computational domain.   It suggests that the $81$th eigenfunction is the true edge state of eigenvalue problem \eqref{equ:dw_evp}-\eqref{equ:localized}.  The other two are referred as to pseudo edge state which appear due to fact that the artificial truncation of the computational domain creates two other edges.

\begin{figure}[!h]
   \centering
   \subcaptionbox{\label{fig:sol_plot_c10_79}}
  {\includegraphics[width=0.27\textwidth]{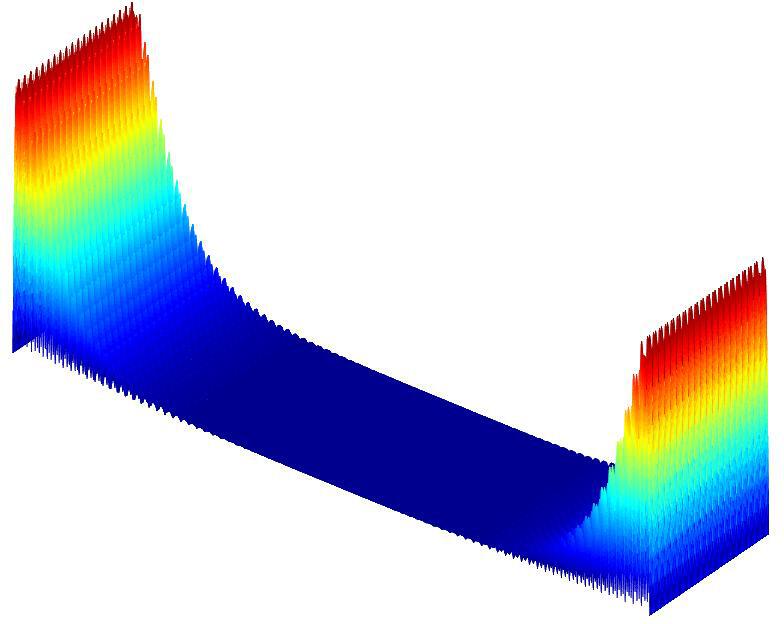}}
  \subcaptionbox{\label{fig:sol_plot_c10_80}}
   {\includegraphics[width=0.27\textwidth]{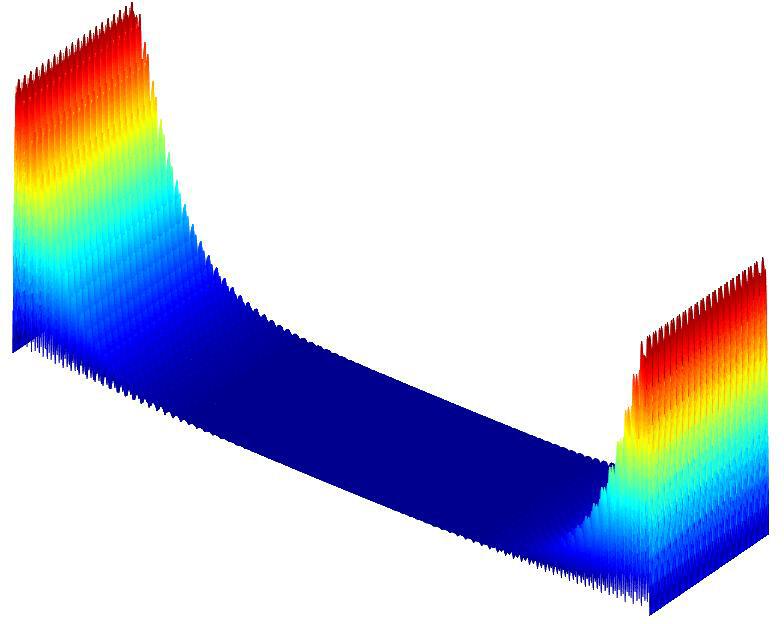}}
   \subcaptionbox{\label{fig:sol_plot_c10_81}}
   {\includegraphics[width=0.35\textwidth]{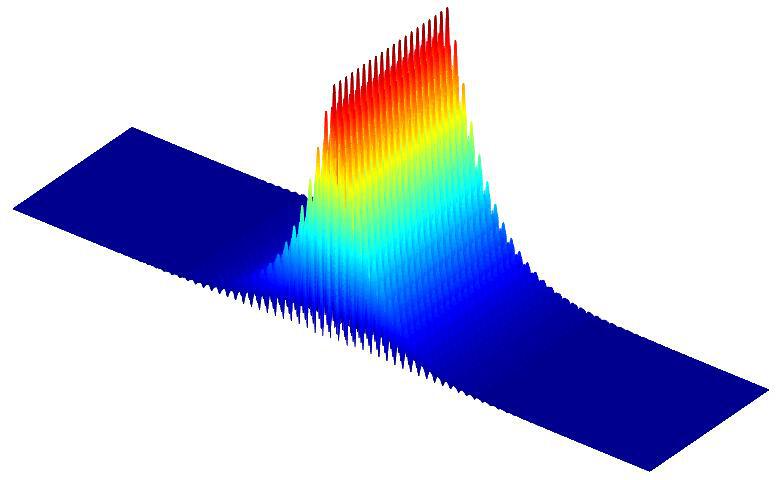}}
      \caption{Plot of the module of the eigenfunctions:  (a) The $79$th eigenfunction; The $80$th eigenfunction; (c) The $81$th eigenfunction}
   \label{fig:c10_sol}
\end{figure}

{\bf Test case  2}.  We consider a relative large jump ratio here.  In this test,  we choose $J = 10$, $\delta = 0.7$ and
$L = 80$.    The plots of eigencurves are presented in Figure \ref{fig:em_c10}. Similarly, we list the plots of the modules of the 79th, 80th, and 81th eigenfunctions
at the point $k_{\parallel} = \frac{2\pi}{3}$ in Figure \ref{fig:c10_sol}.  We observe the same phenomena as in {\bf Test case 1}.  In particular,  we can observe the existence of edge mode.

\section{Conclusion}\label{sec:conclusion}

In this paper, we propose new unfitted Nitsche's methods based on the Floquet-Bloch transform for efficiently simulating photonic graphene with heterogeneous structure. By taking advantage of the structure of underlying meshes, we establish a sharp trace inequality for cut elements, which is the key ingredient to show the stability of the Nitsche's bilinear forms.  The theoretical foundation of the proposed methods builds upon the abstract spectral approximation theory by Babu\v{s}ka and Osborn.   The performance of the proposed unfitted methods is tested with a series of benchmark numerical examples.   Numerical comparison with the Fourier spectral method suggests our method is a better choice for simulating topological materials with discontinuous material weights. In future, we plan to combine the superconvergent tool for unfitted Nitsche's  method in \cite{GY20183} to further improve the accuracy and reduce the CPU time. And we will also apply the results in this work to simulate the evolution of these noval wave modes \cite{hu2020linear,xie2019wave}.

\section*{Acknowledgment}
 The authors thank Professor Michael I. Weinstein for useful discussions. H.G. was partially supported by Andrew Sisson Fund of the University of Melbourne, X.Y. was partially supported by the NSF grant  DMS-1818592, and Y.Z. was partially supported by NSFC grant 11871299.

\appendix
\section{Proof of the Lemma \ref{lem:imtr}}
\label{appendix}
\subsection{A Technical Lemma}
Before giving the proof of Lemma \ref{lem:imtr}, we present a lemma that we shall use.

\begin{lemma}\label{lem:app}
 Let $\bx^j= (x_1^j, x_2^j), ~j = 1, 2, 3$, be the three vertices triangle $K$ and $b_j(\bx)$ be the standard nodal basis function
 associated with $\bx^j$.   Then the following relationship holds
 \begin{equation}\label{eq:basisrel}
|b_j(\bx)| \le 2h|\nabla b_j|, \quad \forall \bx \in K,
\end{equation}
for  $j = 1, 2, 3$.

\end{lemma}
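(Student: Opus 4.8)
The plan is to prove the bound $|b_j(\bx)| \le 2h|\nabla b_j|$ by exploiting the fact that each nodal basis function $b_j$ is an affine function on the triangle $K$ which vanishes on the edge opposite to the vertex $\bx^j$. First I would recall that on a triangle with vertices $\bx^1, \bx^2, \bx^3$, the barycentric coordinate $b_j$ is the unique affine function with $b_j(\bx^i) = \delta_{ij}$; in particular $b_j$ attains its maximum $1$ at $\bx^j$ and is $0$ on the opposite edge $e_j$. Since $b_j$ is affine, $\nabla b_j$ is a constant vector on $K$, and elementary geometry gives $|\nabla b_j| = 1/d_j$, where $d_j = \operatorname{dist}(\bx^j, e_j)$ is the height of the triangle from vertex $\bx^j$ onto the opposite edge. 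Equivalently, $|\nabla b_j|$ equals the reciprocal of that altitude.

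Next I would bound $|b_j(\bx)|$ for an arbitrary $\bx \in K$. Because $b_j$ is affine and vanishes on $e_j$, one has the exact identity $b_j(\bx) = \operatorname{dist}(\bx, \ell_j)\,|\nabla b_j|$ with the appropriate sign, where $\ell_j$ is the line containing $e_j$; restricting to $\bx \in K$ forces $0 \le b_j(\bx) \le 1$ and moreover $\operatorname{dist}(\bx, \ell_j) \le d_j$, which already yields $|b_j(\bx)| \le 1$. To get the factor of $h$, I would instead write $|b_j(\bx)| \le d_j |\nabla b_j|$ and then estimate $d_j$ from above by the diameter of $K$. Since the mesh is the uniform triangulation described in the paper (each sub-rhombus of side $h$ split into two isosceles triangles), the diameter of every element $K$ is bounded by a fixed multiple of $h$; with the specific isosceles right-triangle-type shape one gets $d_j \le h$ for the short altitudes and $d_j \le \sqrt{2}\,h \le 2h$ in the worst case. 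Combining, $|b_j(\bx)| \le d_j|\nabla b_j| \le 2h |\nabla b_j|$ for all $\bx \in K$ and all $j = 1,2,3$.

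A cleaner route that avoids a case analysis on the shape: for any affine $b_j$ and any $\bx, \by \in K$ one has $|b_j(\bx) - b_j(\by)| = |\nabla b_j \cdot (\bx - \by)| \le |\nabla b_j|\,|\bx - \by| \le |\nabla b_j|\,\mathrm{diam}(K)$. Taking $\by = \bx^i$ to be a vertex where $b_j(\bx^i) = 0$ (i.e. $i \ne j$, which exists since $j$ has two opposite vertices), we get $|b_j(\bx)| \le |\nabla b_j|\,\mathrm{diam}(K)$, and then I would invoke $\mathrm{diam}(K) \le 2h$ for the uniform mesh at hand to conclude. The only mild obstacle is pinning down the constant: one must use the explicit geometry of the uniform triangulation (side length $h$, isosceles right triangles) to certify $\mathrm{diam}(K) \le 2h$; this is a routine check, and if one is willing to accept a shape-regularity constant it is immediate. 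Everything else is elementary properties of barycentric coordinates, so I expect no real difficulty beyond bookkeeping the constant.
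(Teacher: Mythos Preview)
Your argument is correct and reaches the same bound, but by a somewhat different route than the paper. The paper writes out the explicit area-coordinate formula
\[
b_1(\bx) = \frac{(x_2 - x_2^3)(x_1^3 - x_1^2) - (x_1 - x_1^2)(x_2^3 - x_2^2)}{2|K|}, \qquad
\nabla b_1 = \frac{1}{2|K|}\bigl(-(x_2^3 - x_2^2),\; x_1^3 - x_1^2\bigr),
\]
bounds the numerator of $b_1$ using $|x_1 - x_1^2|,\,|x_2 - x_2^3| \le h$, and then applies $|a|+|b| \le 2\sqrt{a^2+b^2}$ to recognise $|\nabla b_1|$ on the right-hand side. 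Your proof instead uses the coordinate-free Lipschitz estimate $|b_j(\bx) - b_j(\by)| \le |\nabla b_j|\,|\bx-\by|$ with $\by$ chosen as a vertex at which $b_j$ vanishes, followed by $\mathrm{diam}(K) \le 2h$. Your version is shorter and makes the geometry transparent; the paper's computation shows explicitly where the constant $2$ enters (the $\ell^1$--$\ell^2$ inequality). One small correction: the elements of this particular mesh are not right triangles --- the sub-rhombi have a $60^{\circ}$ angle between $\bv_1$ and $\bv_2$, so splitting them gives isosceles triangles with sides $h,h,h$ or $h,h,h\sqrt{3}$ --- but in either case $\mathrm{diam}(K) \le h\sqrt{3} < 2h$, so your conclusion stands.
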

\begin{proof}
 Without loss of generality, we only prove  \eqref{eq:basisrel} for $j = 1$.  Using  the area coordinates \cite{Ci2002},   we have
\begin{equation}
 b_1(\bx) =  \frac{(x_2-x_2^3)(x_1^3-x_1^2)-  (x_1-x_1^2)(x_2^3 - x_2^2) }{2|K|},
\end{equation}
and
\begin{equation}
 \nabla b_i=  \left( \frac{-(x_2^3-x_2^2)}{2|K|},  \frac{(x_1^3-x_1^2)}{2|K|} \right).
\end{equation}
From the above two expressions, we can deduce that
\begin{align*}
  |b_i(\bx)| \le  h\frac{|x_1^3-x_1^2|  +  |(x_2^3 - x_2^2)| }{2|K|} \le 2h\frac{\sqrt{|x_1^3-x_1^2|^2  +  |(x_2^3 - x_2^2)|^2} }{2|K|} = 2h |\nabla b_i|
\end{align*}
where we have used the fact $ |(x_2-x_2^3|\le h$ and
$|x_1-x_1^2|\le h$ for any point $\bx = (x_1, x_2)$ in the triangle $K$.
\end{proof}

\subsection{Proof of Lemma \ref{lem:imtr}}
\begin{proof}
 It is sufficient to show the lemma for the basis functions $b_j$ since $\phi_h$ is a linear combination of $b_j$.  Using Lemma \ref{lem:app}, we can deduce that
 \begin{align*}
  \|b_j\|^2_{0,  \Gamma_T} & \le  |\Gamma_T| \|b_j\|^2_{0, \infty, \Gamma_T} \le |\Gamma_T| \|b_j\|^2_{0, \infty, K_i} \le  4h^2|\Gamma_T|  |\nabla b_i|^2
  =\frac{  4h^2|\Gamma_T|}{|K_i|} \|\nabla b_i\|_{0, K_i}^2;
  \end{align*}
which completes the proof of \eqref{eq:l2tr}.  The inequality \eqref{eq:h1tr} is implied in the above proof.
\end{proof}

\bibliographystyle{siam}
\bibliography{mybibfile,XYpaper}

\begin{thebibliography}{10}

\bibitem{ablowitz2009conical}
{\sc M~J Ablowitz, S~D Nixon, and Y~Zhu}, {\em Conical diffraction in honeycomb
  lattices}, Physical Review A, 79 (2009), p.~053830.

\bibitem{Ablowitz_Zhu_HC_12}
{\sc M.~J. Ablowitz and Y.~Zhu}, {\em Nonlinear waves in shallow honeycomb
  lattices}, SIAM J. Appl. Math., 72 (2012), pp.~240--260.

\bibitem{AHD2012}
{\sc C.~Annavarapu, M.~Hautefeuille, and J.~E. Dolbow}, {\em A robust
  {N}itsche's formulation for interface problems}, Comput. Methods Appl. Mech.
  Engrg., 225/228 (2012), pp.~44--54.

\bibitem{Ba1970}
{\sc I.~Babu{\v{s}}ka}, {\em The finite element method for elliptic equations
  with discontinuous coefficients}, Computing (Arch. Elektron. Rechnen), 5
  (1970), pp.~207--213.

\bibitem{BO1991}
{\sc I.~Babu{\v{s}}ka and J.~Osborn}, {\em Eigenvalue problems}, in Handbook of
  numerical analysis, {V}ol.\ {II}, Handb. Numer. Anal., II, North-Holland,
  Amsterdam, 1991, pp.~641--787.

\bibitem{Bo2010}
{\sc D.~Boffi}, {\em Finite element approximation of eigenvalue problems}, Acta
  Numer., 19 (2010), pp.~1--120.

\bibitem{BCHLM2015}
{\sc E.~Burman, S.~Claus, P.~Hansbo, M.~G. Larson, and A.~Massing}, {\em
  Cut{FEM}: discretizing geometry and partial differential equations},
  Internat. J. Numer. Methods Engrg., 104 (2015), pp.~472--501.

\bibitem{CZ1998}
{\sc Z.~Chen and J.~Zou}, {\em Finite element methods and their convergence for
  elliptic and parabolic interface problems}, Numer. Math., 79 (1998),
  pp.~175--202.

\bibitem{Ci2002}
{\sc P.~G. Ciarlet}, {\em The finite element method for elliptic problems},
  vol.~40 of Classics in Applied Mathematics, Society for Industrial and
  Applied Mathematics (SIAM), Philadelphia, PA, 2002.
\newblock Reprint of the 1978 original [North-Holland, Amsterdam; MR0520174 (58
  \#25001)].

\bibitem{GY20172}
{\sc H.~Guo and X.~Yang}, {\em Gradient recovery for elliptic interface
  problem: {II}. {I}mmersed finite element methods}, J. Comput. Phys., 338
  (2017), pp.~606--619.

\bibitem{GY20181}
\leavevmode\vrule height 2pt depth -1.6pt width 23pt, {\em Gradient recovery
  for elliptic interface problem: {I}. {B}ody-fitted mesh}, Commun. Comput.
  Phys., 23 (2018), pp.~1488--1511.

\bibitem{GY20183}
\leavevmode\vrule height 2pt depth -1.6pt width 23pt, {\em Gradient recovery
  for elliptic interface problem: {III}. {N}itsche's method}, J. Comput. Phys.,
  356 (2018), pp.~46--63.

\bibitem{GYZ2018}
{\sc H.~Guo, X.~Yang, and Z.~Zhang}, {\em Superconvergence of partially
  penalized immersed finite element methods}, IMA J. Numer. Anal., 38 (2018),
  pp.~2123--2144.

\bibitem{GYZ2019}
{\sc H.~Guo, X.~Yang, and Y.~Zhu}, {\em Bloch theory-based gradient recovery
  method for computing topological edge modes in photonic graphene}, J. Comput.
  Phys., 379 (2019), pp.~403--420.

\bibitem{HR:07}
{\sc F~D~M Haldane and S~Raghu}, {\em Possible realization of directional
  optical waveguides in photonic crystals with broken time-reversal symmetry},
  Physical review letters, 100 (2008), p.~013904.

\bibitem{HH2002}
{\sc A.~Hansbo and P.~Hansbo}, {\em An unfitted finite element method, based on
  {N}itsche's method, for elliptic interface problems}, Comput. Methods Appl.
  Mech. Engrg., 191 (2002), pp.~5537--5552.

\bibitem{HL2005}
{\sc S.~Hou and X.-D. Liu}, {\em A numerical method for solving variable
  coefficient elliptic equation with interfaces}, J. Comput. Phys., 202 (2005),
  pp.~411--445.

\bibitem{HSWZ2013}
{\sc S.~Hou, P.~Song, L.~Wang, and H.~Zhao}, {\em A weak formulation for
  solving elliptic interface problems without body fitted grid}, J. Comput.
  Phys., 249 (2013), pp.~80--95.

\bibitem{hu2020linear}
{\sc P.~Hu, L.~Hong, and Y.~Zhu}, {\em Linear and nonlinear electromagnetic
  waves in modulated honeycomb media}, Studies in Applied Mathematics, 144
  (2020), pp.~18--45.

\bibitem{Ke1975}
{\sc R.~B. Kellogg}, {\em On the {P}oisson equation with intersecting
  interfaces}, Applicable Anal., 4 (1974/75), pp.~101--129.
\newblock Collection of articles dedicated to Nikolai Ivanovich Muskhelishvili.

\bibitem{Shvets-PTI:13}
{\sc A.~B. Khanikaev, S.~H. Mousavi, W.-K. Tse, M.~Kargarian, A.~H. MacDonald,
  and G.~Shvets}, {\em Photonic topological insulators}, Nature materials, 12
  (2013), pp.~233--239.

\bibitem{LWZ2019}
{\sc J.~P. Lee-Thorp, M.~I. Weinstein, and Y.~Zhu}, {\em Elliptic operators
  with honeycomb symmetry: {D}irac points, edge states and applications to
  photonic graphene}, Arch. Ration. Mech. Anal., 232 (2019), pp.~1--63.

\bibitem{LL1994}
{\sc R.~J. LeVeque and Z.~Li}, {\em The immersed interface method for elliptic
  equations with discontinuous coefficients and singular sources}, SIAM J.
  Numer. Anal., 31 (1994), pp.~1019--1044.

\bibitem{Li1998}
{\sc Z.~Li}, {\em The immersed interface method using a finite element
  formulation}, Appl. Numer. Math., 27 (1998), pp.~253--267.

\bibitem{LI2006}
{\sc Z.~Li and K.~Ito}, {\em The immersed interface method}, vol.~33 of
  Frontiers in Applied Mathematics, Society for Industrial and Applied
  Mathematics (SIAM), Philadelphia, PA, 2006.
\newblock Numerical solutions of PDEs involving interfaces and irregular
  domains.

\bibitem{LLW2003}
{\sc Z.~Li, T.~Lin, and X.~Wu}, {\em New {C}artesian grid methods for interface
  problems using the finite element formulation}, Numer. Math., 96 (2003),
  pp.~61--98.

\bibitem{LLZ2015}
{\sc T.~Lin, Y.~Lin, and X.~Zhang}, {\em Partially penalized immersed finite
  element methods for elliptic interface problems}, SIAM J. Numer. Anal., 53
  (2015), pp.~1121--1144.

\bibitem{lu2014topological}
{\sc L~Lu, J~D Joannopoulos, and M~Solja{\v{c}}i{\'c}}, {\em Topological
  photonics}, Nature Photonics, 8 (2014), pp.~821--829.

\bibitem{MKW:15}
{\sc S~H Mousavi, A~B Khanikaev, and Z~Wang}, {\em Topologically protected
  elastic waves in phononic metamaterials}, Nature communications, 6 (2015).

\bibitem{Ni1971}
{\sc J.~Nitsche}, {\em \"uber ein {V}ariationsprinzip zur {L}\"osung von
  {D}irichlet-{P}roblemen bei {V}erwendung von {T}eilr\"aumen, die keinen
  {R}andbedingungen unterworfen sind}, Abh. Math. Sem. Univ. Hamburg, 36
  (1971), pp.~9--15.
\newblock Collection of articles dedicated to Lothar Collatz on his sixtieth
  birthday.

\bibitem{Pe1977}
{\sc C.~S. Peskin}, {\em Numerical analysis of blood flow in the heart}, J.
  Computational Phys., 25 (1977), pp.~220--252.

\bibitem{Pe2002}
\leavevmode\vrule height 2pt depth -1.6pt width 23pt, {\em The immersed
  boundary method}, Acta Numer., 11 (2002), pp.~479--517.

\bibitem{plotnik2013observation}
{\sc Y~Plotnik, M~C Rechtsman, D~Song, M~Heinrich, J~M Zeuner, S~Nolte,
  Y~Lumer, N~Malkova, J~Xu, A~Szameit, Z~Chen, and M~Segev}, {\em Observation
  of unconventional edge states in `photonic graphene'}, Nature materials, 13
  (2014), pp.~57--62.

\bibitem{rechtsman2013photonic}
{\sc M~C Rechtsman, J~M Zeuner, Y~Plotnik, Y~Lumer, D~Podolsky, F~Dreisow,
  S~Nolte, M~Segev, and A~Szameit}, {\em Photonic floquet topological
  insulators}, Nature, 496 (2013), pp.~196--200.

\bibitem{skorobogatiy2009fundamentals}
{\sc Maksim Skorobogatiy and Jianke Yang}, {\em Fundamentals of photonic
  crystal guiding}, Cambridge University Press, 2009.

\bibitem{susstrunk2015observation}
{\sc R~S{\"u}sstrunk and S~D Huber}, {\em Observation of phononic helical edge
  states in a mechanical 'topological insulator'}, Science, 349 (2015),
  pp.~47--50.

\bibitem{SepctralBook}
{\sc Lloyd~N. Trefethen}, {\em Spectral methods in MATLAB}, SIAM, 2000.

\bibitem{Sheng:15}
{\sc M~Xiao, G~Ma, Z~Yang, P~Sheng, Z~Q Zhang, and C~T Chan}, {\em Geometric
  phase and band inversion in periodic acoustic systems}, Nature Physics, 11
  (2015), pp.~240--244.

\bibitem{xie2019wave}
{\sc P.~Xie and Y.~Zhu}, {\em Wave packet dynamics in slowly modulated photonic
  graphene}, Journal of Differential Equations, 267 (2019), pp.~5775--5808.

\end{thebibliography}
\end{document}